\theoremstyle{definition}
\newtheorem{theorem}{Theorem}[section]
\newtheorem{definition}[theorem]{Definition}
\newtheorem{proposition}[theorem]{Proposition}
\newtheorem{lemma}[theorem]{Lemma}
\newtheorem{remark}[theorem]{Remark}
\newtheorem*{proposition*}{Proposition}
\numberwithin{equation}{section}
\begin{document}
\title[Moduli of HYM connections]{The moduli space of Hermitian-Yang-Mills connections}
\author{Jun Sasaki}
\date{}
\address{Department of Mathematics, Institute of Science Tokyo, 2-12-1, O-okayama, Meguro, 152-8551, Japan}
\email{sasaki.j.ac@m.titech.ac.jp, sasaki.j.fb69@m.isct.ac.jp}
\subjclass[2020]{Primary:53C07; Secondary:58D27}
\keywords{Hermitian-Yang-Mills connection, moduli space, stability}
\begin{abstract}
  In this paper, we study  Hermitian-Yang-Mills connections (HYM) on a smooth Hermitian vector bundle over compact K\"{a}hler manifold. We calculate the virtual dimension of the moduli space of HYM connections and provide an analytic proof that the stability of Higgs bundles is an open condition.
\end{abstract}
\maketitle
\setcounter{section}{-1}
\section{Introduction}
Let $\left(M,g\right)$ be a compact K\"{a}hler manifold and $\left(E,h\right)$ be a smooth Hermitian vector bundle over $M$. Let $\nabla$ be a unitary connection on $E$ and $\theta$ be a smooth $\textrm{End}\left(E\right)$-valued $\left(1,0\right)$-form. We decompose $\nabla:\Omega^0\left(E\right)\to\Omega^{1,0}\left(E\right)\oplus\Omega^{0,1}\left(E\right)$ as $\nabla=\partial^E+\bar{\partial}^E$ and define an operator $D$ as $D=\nabla+\theta+\theta^*$ where $\theta^*$ is an adjoint of $\theta$ with respect of $h$. Then $D$ is called a \textit{Hermitian-Yang-Mills (HYM) connection} if it satisfies the following equations:
\begin{equation}
  R\left(\nabla\right)^{2,0}=0,\quad\bar{\partial}^{\textrm{End}\left(E\right)}\theta=0,\quad\theta\wedge\theta=0,\quad\sqrt{-1}\Lambda R\left(D\right)=c\,\textrm{id}_E \label{eq:0.2}
\end{equation}
where $\bar{\partial}^{\textrm{End}\left(E\right)}:\Omega^{1,0}\bigl(\textrm{End}\left(E\right)\bigr)\to\Omega^{1,1}\bigl(\textrm{End}\left(E\right)\bigr)$ is an operator induced by $\bar{\partial}^E$ and $c$ is also a real constant depending on $M$, $g$ and $E$. These equations are called the HYM equations.

The HYM connection was first introduced by Hitchin in $1987$ \cite{Hitchin} for a rank $2$-bundle over a compact Riemann surface of genus $g\geq 2$, and then studied on compact K\"{a}hler manifolds of general dimension. In particular, in $1988$, Simpson \cite{Simpson} established the Kobayashi-Hitchin correspondence for Higgs bundles, which asserts that a Higgs bundle over a compact K\"{a}hler manifold admits an irreducible HYM connection if and only if it is stable as Higgs bundle. Therefore HYM connections are not only subjects of differential geometry but also deeply related to algebraic geometry as HE connections are.

The aim of this paper is to calculate the virtual dimension of the moduli space of HYM connections and provide an analytic proof that the stability of Higgs bundles is an open condition. As for the dimension, Tanaka proved that on a compact K\"{a}hler surface, the HYM equations are equivalent to the Kapustin-Witten equations and Liu, Rayan, Tanaka calculated the virtual dimension of the moduli space of the solutions to the Kapustin-Witten equations \cite{Tanaka},\cite{Liu}. We also compare our results with theirs. As for the stability, Simpson proveded algebraic proof that on a projective variety, the stability of Higgs bundles is open condition \cite[Lemma 3.7]{Simpson3}. Therefore our method of proof differs from Simpson's approach.

The main result of this paper is stated as follows:
\begin{enumerate}[$\left(a\right)$]
  \item When $M$ is a surface, the virtual real dimension of the moduli space of HYM connections is given by $2+b_2-r^2\chi\left(M\right)$, where $r$ is the rank of vector bundle $E$, $b_2$ is second betti number of $M$ and $\chi\left(M\right)$ is the Euler number of $M$. In addition to the assumption concerning $M$, if the structure group o vector bundle $E$ is $SU\left(2\right)$, the virtual real dimension of moduli space is given by $-3\chi\left(M\right)$, which coincides with the virtual dimension of the moduli space of the solutions to the Kapustin-Witten equations.
  \item There is an open injection from the moduli space of irreducible HYM connections into the moduli space of simple Higgs structures.
\end{enumerate}

This paper is organized as follows.
In \textbf{Section 1}, we introduce fundamental concepts such as Higgs bundles and HYM connections.
In \textbf{Section 2}, we formalize Higgs structures and HYM connections on smooth Hermitian vector bundles. We also define the moduli spaces of Higgs structures and HYM connections.
In \textbf{Section 3}, we compute the tangent space of the moduli space of HYM connections at a given point. Furthermore, we verify that this tangent space carries a complex structure as a vector space.
In \textbf{Section 4}, we justify the computations in Section $3$ by constructing an elliptic complex. This coincides with the elliptic complex obtained by Hitchin when the base space is a compact Riemann surface \cite{Hitchin}. Additionally, we describe the relationship between the cohomology groups of this elliptic complex and those of the Dolbeault-Higgs complex, which generalizes the Dolbeault complex. These results are useful for calculating the virtual dimension of the moduli space and establishing an injection from the moduli space of irreducible HYM connections into the moduli space of simple Higgs structures in Section $6$.
In \textbf{Section 5}, we establish an analogue of the Kodaira-Serre duality theorem in the context of Higgs bundles, and use it to compute the virtual dimension of the moduli space when the base of the vector bundle is a compact K\"{a}hler surface.
In \textbf{Section 6}, we prove that there exists an open injection from the moduli space of irreducible HYM connections into the moduli space of simple Higgs structures. we refer to Bradlow's proof \cite{Bradlow} on the existence of an injection from the moduli space of stable pairs to that of simple pairs. By the Kobayashi-Hitchin correspondence for Higgs bundles, the image of this injection can be identified with the moduli space of stable Higgs structures. Finally, in the \textbf{Appendix}, we study the infinitesimal deformations of Higgs structures and the deformation complex derived from them, which play a crucial role in our main results.
\section{Preliminaries}
\subsection{Higgs Bundles}
Throughout this paper, we assume that complex manifolds are connected. Let $M$ be a complex manifold and $E$ be a smooth complex vector bundle of rank $r$ over $M$.

\begin{definition}\cite{Simpson2}
Let $D^{\prime\prime}:\Omega^0\left(E\right)\to\Omega^1\left(E\right)$ be a $\mathbb{C}$-linear mapping. A couple $\left(E,D^{\prime\prime}\right)$ is called a \textit{Higgs bundle} over $M$ if $D^{\prime\prime}$ satisfies the Leibniz rule
\begin{equation}
    D^{\prime\prime}\left(fs\right)=fD^{\prime\prime}s+s\otimes\bar{\partial}f \label{eq:1.2}
\end{equation}
for $f\in C^\infty\left(M\right),s\in\Omega^0\left(E\right)$ and the integrability condition
\begin{equation}
    D^{\prime\prime}\circ D^{\prime\prime}=0:\Omega^0\left(E\right)\to\Omega^2\left(E\right). \label{eq:1.3}
\end{equation}
When $D^{\prime\prime}$ satisfies \eqref{eq:1.2} and \eqref{eq:1.3}, $D^{\prime\prime}$ is called a \textit{Higgs structure} on $E$. Moreover a triple $\left(E,D^{\prime\prime},h\right)$ is called a \textit{Hermitian Higgs bundle} over $M$ if $\left(E,D^{\prime\prime}\right)$ is a Higgs bundle over $M$ and $h$ is a Hermitian metric on $E$.
\end{definition}

According to the decomposition $\Omega^1\left(E\right)=\Omega^{1,0}\left(E\right)\oplus\Omega^{0,1}\left(E\right)$, we write the Higgs structure on $E$ as $D^{\prime\prime}=\bar{\partial}^E+\theta$. Then by the integrability condition \eqref{eq:1.3}, $\bar{\partial}^E$ is a holomorphic structure on $E$ and $\theta$ is an $\textrm{End}\left(E\right)$-valued holomorphic $1$-form satisfying $\theta\wedge\theta=0$. The $1$-form $\theta$ is called a \textit{Higgs field}.

The Higgs structure $D^{\prime\prime}=\bar{\partial^E}+\theta$ on $E$ naturally induces that on $\textrm{End}\left(E\right)$. Its Higgs field $\theta^{\textrm{End}\left(E\right)}$ is given by $\theta^{\textrm{End}\left(E\right)}\varphi=\left[\theta,\varphi\right]$ for $\varphi\in\Omega^0\bigl(\textrm{End}\left(E\right)\bigr)$.

Let $\left(E,\bar{\partial}^E+\theta,h\right)$ be a Hermitian Higgs bundle over $M$ and $\nabla^E$ be the canonical connection on a holomorphic Hermitian vector bundle $\left(E,\bar{\partial}^E,h\right)$. We decompose $\nabla^E$ as $\nabla^E=\partial^E+\bar{\partial}^E$. We define an operator $D^\prime:\Omega^0\left(E\right)\to\Omega^1\left(E\right)$ by $D^\prime=\partial^E+\theta^*$, where $\theta^*\in\Omega^{0,1}\bigl(\textrm{End}\left(E\right)\bigr)$ is the adjoint of $\theta$ with respect to $h$. Then $D^\prime$ satisfies the integrability condition $D^{\prime}\circ D^{\prime}=0:\Omega^0\left(E\right)\to\Omega^2\left(E\right)$. Also we define an operator $D:\Omega^0\left(E\right)\to\Omega^1\left(E\right)$ as $D=D^\prime+D^{\prime\prime}$. This is a connection on $E$ and called the \textit{Hitchin-Simpson (HS) connection} on the Hermitian Higgs bundle $\left(E,\bar{\partial}^E+\theta,h\right)$ \cite[p.$4$]{Bruzzo}. By the integrability conditions for $D^\prime$ and $D^{\prime\prime}$, the curvature $R\left(D\right)$ of $D$ is given by
\begin{equation}
  R\left(D\right)=R\left(\nabla^E\right)+\left[\theta\wedge\theta^*\right]+\left(\partial^{\textrm{End}\left(E\right)}\theta+\bar{\partial}^{\textrm{End}\left(E\right)}\theta^*\right).
\end{equation}
The HS connection on a Hermitian Higgs bundle is considered as a generalization of the canonical connection on a holomorphic Hermitian vector bundle.
\subsection{Hermitian-Yang-Mills Connection}
Let $\left(M,g\right)$ be an $n$-dimensional compact Kähler manifold, $\left(E,D^{\prime\prime}=\bar{\partial}^E+\theta,h\right)$ be a Hermitian Higgs bundle of rank $r$ over $M$, $D$ be the HS connection on $\left(E,D^{\prime\prime},h\right)$ and $\omega$ be the K\"{a}hler form of $\left(M,g\right)$.
\begin{definition} \cite{Simpson2}
  Consider the following equation for the HS connection $D$ on $\left(E,D^{\prime\prime},h\right)$:
  \begin{equation}
    \sqrt{-1}\Lambda R\left(D\right)=c\,\textrm{id}_E,\quad c=\frac{2\pi\mu\left(E\right)}{\left(n-1\right)!\cdot\textrm{Vol}\left(M,g\right)} \label{eq:1.5}
  \end{equation}
  This equation is called the Hermitian-Yang-Mills (HYM) equation with HYM factor $c$ in this paper. When $D$ satisfies this equation, $D$ is called a \textit{Hermitian-Yang-Mills (HYM) connection with HYM factor $c$} on $\left(E,D^{\prime\prime}\right)$ and $h$ is called a \textit{HYM metric with HYM factor $c$} on $\left(E,D^{\prime\prime}\right)$.
\end{definition}
\begin{remark}
  Since $\left(M,g\right)$ is a compact Kähler manifold, the HYM factor $c$ is determined by $M$, $g$, and $E$, and does not depend on $h$ and $D$. Therefore when we fix a smooth complex vector bundle and a compact Kähler manifold which is its base space, we may omit the phrase ``with HYM factor $c$''.
\end{remark}

We decompose $D$ as $D=\nabla^E+\Theta$, where $\nabla^E$ is the canonical connection on a holomorphic vector bundle $\left(E,\bar{\partial}^E,h\right)$ and $\Theta=\theta+\theta^*\in\Omega^1\bigl(\textrm{Herm}\left(E,h\right)\bigr)$. Then if $\theta=0$, the HYM equation coincides with the Hermitian-Einstein equation. Thus the HYM equation can be considered as a generalization of the Hermitian-Einstein equation.
\subsection{Moduli Space}
Let $M$ be a complex manifold and $E$ a smooth complex vector bundle over $M$. In what follows, we formulate the spaces of Higgs structures and HYM connections and define their moduli spaces.

Firstly, we define the moduli space of Higgs structures. Let $\mathscr{D}^{\prime\prime}\left(E\right)$ be the set of $\mathbb{C}$-linear mappings $D^{\prime\prime}:\Omega^0\left(E\right)\to\Omega^1\left(E\right)$ satisfying the Leibniz rule \eqref{eq:1.2}. Fixing any $D^{\prime\prime}_0\in\mathscr{D}^{\prime\prime}\left(E\right)$, we have $\mathscr{D}^{\prime\prime}\left(E\right)=D^{\prime\prime}_0+\Omega^1\bigl(\textrm{End}\left(E\right)\bigr)$. Let $\mathscr{H}^{\prime\prime}\left(E\right)\subset\mathscr{D}^{\prime\prime}\left(E\right)$ be the set of $D^{\prime\prime}\in\mathscr{D}^{\prime\prime}\left(E\right)$ satisfying the integrability condition \eqref{eq:1.3}. $\mathscr{H}^{\prime\prime}\left(E\right)$ is the space of Higgs structures on $E$.

Let $\mathcal{G}\left(E\right)=GL\left(E\right)$ be the gauge transformation group of $E$. Then the right-action of $\mathcal{G}\left(E\right)$ on $\mathscr{D}^{\prime\prime}\left(E\right)$ is defined by, for $g\in\mathcal{G}\left(E\right)$ and $D^{\prime\prime}\in\mathscr{D}^{\prime\prime}\left(E\right)$,
\begin{equation}
  D^{\prime\prime}\cdot g:=g^{-1}\circ D^{\prime\prime}\circ g.
\end{equation}
This action preserves $\mathscr{H}^{\prime\prime}\left(E\right)$. $\mathscr{M}_{\textrm{Higgs}}$ denotes the quotient space $\mathscr{H}^{\prime\prime}\left(E\right)/\mathcal{G}\left(E\right)$ and is called the moduli space of Higgs structures.

Secondly, we define the moduli space of HYM connections. Let $h$ be a Hermitian metric on a smooth complex vector bundle $E$ and $\mathscr{A}\left(E\right)$ be the set of connections on $E$. If a connection $D\in\mathscr{A}\left(E\right)$ is given, then there exist naturally defined $\mathbb{C}$-linear mappings $D^\prime,D^{\prime\prime}:\Omega^0\left(E\right)\to\Omega^1\left(E\right)$ satisfying
\begin{equation}
  D = D^\prime + D^{\prime\prime} \label{eq:2.1}
\end{equation}
and the Leibniz rules. (See \cite[p.$13$]{Simpson2}.) Let $\mathscr{H}\left(E,h\right)\subset\mathscr{A}\left(E\right)$ be the set of $D\in\mathscr{A}\left(E\right)$ such that $D^{\prime\prime}$ defines the Higgs structure on $E$ in the decomposition \eqref{eq:2.1} of $D$. The space $\mathscr{H}\left(E,h\right)$ can be considered as the set of HS connections on $\left(E,h\right)$ including Higgs structures on $E$.

In what follows, let $\left(M,g\right)$ be a compact K\"{a}hler manifold. Let $\mathscr{Y}\left(E,h\right)\subset\mathscr{H}\left(E,h\right)$ be the set of $D\in\mathscr{H}\left(E,h\right)$ satisfying the HYM equation \eqref{eq:1.5}. The space $\mathscr{Y}\left(E,h\right)$ can be considered as the set of HYM connections on $\left(E,h\right)$ including Higgs structures on $E$.

Let $\mathcal{G}\left(E,h\right)=U\left(E,h\right)$ be the gauge transformation group of $\left(E,h\right)$. Then the right-action of $\mathcal{G}\left(E,h\right)$ on $\mathscr{H}\left(E,h\right)$ is defined by, for $g\in\mathcal{G}\left(E,h\right)$ and $D\in\mathscr{H}\left(E,h\right)$,
\begin{equation}
  D\cdot g:=g^{-1}\circ D\circ g.
\end{equation}
This action preserves $\mathscr{Y}\left(E,h\right)$. $\mathscr{M}_{\textrm{HYM}}$ denotes the quotient space $\mathscr{Y}\left(E,h\right)/\mathcal{G}\left(E,h\right)$ and is called the moduli space of HYM connections.

Next theorem follows from the compactness of the unitary group.
\begin{theorem}
  $\mathscr{M}_{\textrm{HYM}}$ \textit{is a Hausdorff space with respect to the} $C^\infty$\textit{-topology.}
\end{theorem}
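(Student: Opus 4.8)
The plan is to invoke the standard principle that a quotient $X/G$ by a group action is Hausdorff as soon as the quotient map is open and the orbit relation $R=\left\{\left(x,x\cdot g\right):x\in X,\ g\in G\right\}$ is closed in $X\times X$. The quotient map $\pi:\mathscr{Y}\left(E,h\right)\to\mathscr{M}_{\textrm{HYM}}$ is open, since for any open $U\subset\mathscr{Y}\left(E,h\right)$ one has $\pi^{-1}\left(\pi\left(U\right)\right)=\bigcup_{g\in\mathcal{G}\left(E,h\right)}U\cdot g$, a union of opens. So the whole problem reduces to the following: if $D_i\to D$ and $D_i\cdot g_i\to D'$ in the $C^\infty$-topology with $g_i\in\mathcal{G}\left(E,h\right)$, then $D'=D\cdot g$ for some $g\in\mathcal{G}\left(E,h\right)$.

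To find $g$, I would exploit the compactness of the unitary group as follows. A gauge transformation $g\in\mathcal{G}\left(E,h\right)=U\left(E,h\right)$ is a section of the bundle of $h$-unitary automorphisms of $E$, whose fibres are isomorphic to the compact group $U\left(r\right)$; hence the $C^0$-norm of $g_i$ (measured via $h$) is bounded by a constant independent of $i$. Next, fixing a background connection to define global $C^k$-norms and working in local trivialisations, write $D_i=d+A_i$ and $D_i\cdot g_i=g_i^{-1}\circ D_i\circ g_i=d+g_i^{-1}A_ig_i+g_i^{-1}\,dg_i$; denoting by $A_i'$ the connection form of $D_i\cdot g_i$, this yields the identity
\[
  dg_i=g_iA_i'-A_ig_i.
\]
Since $A_i\to A$ and $A_i'\to A'$ in $C^\infty$ while $\left(g_i\right)$ is bounded in $C^0$, the right-hand side is bounded in $C^0$, so $\left(g_i\right)$ is bounded in $C^1$; substituting back into the same identity shows inductively that boundedness of $\left(g_i\right)$ in $C^k$ forces boundedness in $C^{k+1}$, hence $\left(g_i\right)$ is bounded in every $C^k$-norm.

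By the Arzel\`{a}--Ascoli theorem (the inclusion $C^{k+1}\hookrightarrow C^k$ being compact over the compact manifold $M$) together with a diagonal argument, a subsequence of $\left(g_i\right)$ converges in $C^k$ for every $k$, i.e. converges in the $C^\infty$-topology to a smooth endomorphism $g$ of $E$. As a $C^\infty$-limit of $h$-unitary automorphisms, $g$ is again an $h$-unitary automorphism, so $g\in\mathcal{G}\left(E,h\right)$; and passing to the limit in $D_i\cdot g_i=g_i^{-1}\circ D_i\circ g_i$ gives $D'=g^{-1}\circ D\circ g=D\cdot g$. Thus $R$ is closed, and $\mathscr{M}_{\textrm{HYM}}$ is Hausdorff. (Equivalently, the same estimates show the action of $\mathcal{G}\left(E,h\right)$ on $\mathscr{Y}\left(E,h\right)$ is proper, which gives Hausdorffness directly.)

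The only genuinely delicate point — and the step I expect to require the most care — is the a priori control of the gauge transformations: the gauge group is infinite-dimensional and not compact, so compactness of $U\left(r\right)$ only supplies the uniform $C^0$-bound, and the bootstrap on $dg_i=g_iA_i'-A_ig_i$ must be carried out explicitly (globalised via a partition of unity and a fixed reference connection) to upgrade it to $C^\infty$-subconvergence. Everything else is formal point-set topology.
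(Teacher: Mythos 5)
Your proof is correct and follows exactly the route the paper indicates: the paper gives no written proof, only the remark that the theorem ``follows from the compactness of the unitary group,'' and your argument is the standard fleshing-out of that hint --- the $C^0$-bound from unitarity, the bootstrap on $dg_i=g_iA_i'-A_ig_i$, and Arzel\`{a}--Ascoli to close the orbit relation. No gaps; the one point worth noting is that sequential closedness suffices here because the $C^\infty$-topology on sections over a compact manifold is metrizable.
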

Consider a bijection
\begin{equation}
  \mathscr{A}\left(E\right)\to\mathscr{D}^{\prime\prime}\left(E\right),\quad D^\prime+D^{\prime\prime}\mapsto D^{\prime\prime}. \label{eq:2.3}
\end{equation}
$\mathcal{G}\left(E\right)$ acts on $\mathscr{A}\left(E\right)$ from the right through this bijection, wchich is given by, for $g\in\mathcal{G}\left(E\right)$ and $D^\prime+D^{\prime\prime}\in\mathscr{A}\left(E\right)$,
\begin{equation}
  \left(D^\prime+D^{\prime\prime}\right)\cdot g=g^*\circ D^\prime\circ{g^*}^{-1}+g^{-1}\circ D^{\prime\prime}\circ g, \label{eq:2.4}
\end{equation}
where $g^*\in\mathcal{G}\left(E\right)$ is the adjoint of $g$ with respect to $h$. Similarly, there exists a bijection between $\mathscr{H}^{\prime\prime}\left(E\right)$ and $\mathscr{H}\left(E,h\right)$, which is given by the restriction of \eqref{eq:2.3} to $\mathscr{H}\left(E,h\right)$. Also, the right-action of $\mathcal{G}\left(E\right)$ on $\mathscr{A}\left(E\right)$ preserves $\mathscr{H}\left(E,h\right)$.
\section{Infinitesimal deformations of HYM connections}
In this section, we calculate the space of infinitesimal deformations of a HYM connection $D$. This coincides with the tangent space of the moduli space $\mathscr{M}_{\textrm{HYM}}$ at $\left[D\right]$. Let $\left(M,g\right)$ be an $n$-dimensional compact Kähler manifold, $\left(E,h\right)$ be a smooth Hermitian vector bundle over $M$. Let
\begin{equation}
    D_t=D+\alpha_t,\quad\left|t\right|\ll 1
\end{equation}
be a curve in $\mathscr{Y}\left(E,h\right)$, where $\alpha_t\in\Omega^1\bigl(\textrm{End}\left(E\right)\bigr)$ and $\alpha_0=0$. If the decompositions \eqref{eq:2.1} of $D_t$ and $D$ are written as $D_t=D_t^\prime+D_t^{\prime\prime}$ and $D=D^\prime+D^{\prime\prime}$, then we have
\begin{equation}
  D_t^{\prime\prime}=D^{\prime\prime}+\frac{1}{2}\left(\alpha_t+\left(\alpha_t^*\right)^\prime-\left(\alpha_t^*\right)^{\prime\prime}\right). \label{eq:3.1}
\end{equation}

Consider a $\mathbb{R}$-linear mapping $P_1:\Omega^1\bigl(\textrm{End}\left(E\right)\bigr)\to\Omega^1\bigl(\textrm{End}\left(E\right)\bigr)$ defined by, for $\alpha=\beta+\gamma\in\Omega^1\bigl(\textrm{Herm}_{\textrm{skew}}\left(E,h\right)\bigr)\oplus\Omega^1\bigl(\textrm{Herm}\left(E,h\right)\bigr)$,
\begin{equation}
  P_1\left(\alpha\right)=\beta^{\prime\prime}+\gamma^\prime \label{fig:1}
\end{equation}
where $\beta^{\prime\prime}$ and $\gamma^\prime$ is $\left(0,1\right)$ and $\left(1,0\right)$-components of $\beta$ and $\gamma$, respectively. Then $P_1$ is bijective. For $\alpha_t\in\Omega^1\bigl(\textrm{End}\left(E\right)\bigr)$, from \eqref{eq:3.1}, we have $D_t^{\prime\prime}=D^{\prime\prime}+\left(\beta_t^{\prime\prime}+\gamma_t^\prime\right)$. Since $D_t\in\mathscr{Y}\left(E,h\right)$ implies $D_t^{\prime\prime}\in\mathscr{H}^{\prime\prime}\left(E\right)$, we have
\begin{equation}
  D_{\textrm{End}\left(E\right)}^{\prime\prime}\left(\beta_t^{\prime\prime}+\gamma_t^\prime\right)+\left(\beta_t^{\prime\prime}+\gamma_t^\prime\right)\wedge\left(\beta_t^{\prime\prime}+\gamma_t^\prime\right)=0.
\end{equation}
Differentiating both sides of this equation at $t=0$, we obtain
\begin{equation}
  D_{\textrm{End}\left(E\right)}^{\prime\prime}\left(\beta^{\prime\prime}+\gamma^\prime\right)=0,\textrm{ where }\beta^{\prime\prime}=\left.\frac{d}{dt}\right|_{t=0}\beta_t^{\prime\prime},\gamma^\prime=\left.\frac{d}{dt}\right|_{t=0}\gamma_t^\prime.
\end{equation}

We decompose $D_t$ and $D$ into unitary connection part and $\Omega^1\bigl(\textrm{Herm}\left(E,h\right)\bigr)$ part as $D_t=\nabla_t^E+\Theta_t$ and $D=\nabla^E+\Theta$. Then we have $\nabla^E_t=\nabla^E+\beta_t$ and $\Theta_t=\Theta+\gamma_t$.
\if Henceforth, when we write a connection as $\nabla$ with the symbol of the vector bundle $E$ in the upper right, unless otherwise we state, it always denotes the canonical connection on a holomorphic Hermitian vector bundle $\left(E, \bar{\partial}^E, h\right)$, where $\bar{\partial}^E$ is a $\left(0,1\right)$-component of $\nabla$, that is, we assume that $\left(0,1\right)$-component of $\nabla$ is always integrable.\fi
Since $D_t$ satisfies the HYM equation $\sqrt{-1}\Lambda\Bigl(R\left(\nabla_t^E\right)+\Theta_t\wedge\Theta_t\Bigr)=0$, we have
\begin{equation}
  \sqrt{-1}\Lambda\Bigl(R\left(\nabla^E\right)+\Theta\wedge\Theta+\nabla^{\textrm{End}\left(E\right)}\beta_t+\left[\Theta,\gamma_t\right]+\beta_t\wedge\beta_t+\gamma_t\wedge\gamma_t\Bigr)=c\,\textrm{id}_E.
\end{equation}
Since $c$ is a constant independent of $t$, differentiating both sides at $t=0$, we obtain
\begin{equation}
  \Lambda\left(\nabla^{\textrm{End}\left(E\right)}\beta+\left[\Theta,\gamma\right]\right)=0,\textrm{ where }\beta=\left.\frac{d}{dt}\right|_{t=0}\beta_t,\gamma=\left.\frac{d}{dt}\right|_{t=0}\gamma_t.
\end{equation}
Moreover if $D_t$ are obtained by a one-parameter family of gauge transformations $\left(g_t\right)_{\left|t\right|\ll 1}$ of $\left(E,h\right)$ as $D_t=g_t^{-1}\circ D\circ g_t$, then differentiating both sides of this equation at $t = 0$, we obtain
\begin{equation}
  \alpha=\nabla^{\textrm{End}\left(E\right)}g+\left[\Theta,g\right],\textrm{ where }g=\left.\frac{d}{dt}\right|_{t=0}g_t.
\end{equation}
From these, the space of infinitesimal deformations of $D$ is given by $H_D^1$ defined by
\begin{equation}
  H_D^1=\frac{\left\{\beta+\gamma\in\Omega^1\bigl(\textrm{End}\left(E\right)\bigr)\middle|
  \begin{gathered}
    D_{\textrm{End}\left(E\right)}^{\prime\prime}(\beta^{\prime\prime}+\gamma^\prime)=0,\\
    \Lambda\left(\nabla^{\textrm{End}\left(E\right)}\beta+\left[\Theta,\gamma\right]\right)=0
  \end{gathered}
  \right\}}{\left\{\nabla^{\textrm{End}\left(E\right)}g+\left[\Theta,g\right]\middle|g\in\Omega^0\bigl(\textrm{Herm}_{\textrm{skew}}\left(E,h\right)\bigr)\right\}}. \label{eq:3.2}
\end{equation}
We will discuss in Section $4$ how this can be formulated as the cohomology group of a certain elliptic complex.

In the following, we will show this vector space has a complex structure. Since there is $\beta^{\prime\prime}+\gamma^\prime$ on the right side of \eqref{eq:3.2} and $\alpha\in\Omega^1\bigl(\textrm{End}\left(E\right)\bigr)$ is in one-to-one correspondence with $\beta^{\prime\prime}+\gamma^\prime\in\Omega^1\bigl(\textrm{End}\left(E\right)\bigr)$ by $P_1$ defined as in \eqref{fig:1}, we rewrite \eqref{eq:3.2} by using $\beta^{\prime\prime}$ and $\gamma^\prime$. First, we have
\begin{align}
  &\Lambda\left(\nabla^{\textrm{End}\left(E\right)}\beta+\left[\Theta,\gamma\right]\right)=0\\
  \Longleftrightarrow&\Lambda\left(\partial^{\textrm{End}\left(E\right)}\beta^{\prime\prime}+\bar{\partial}^{\textrm{End}\left(E\right)}\beta^\prime+\theta^{\textrm{End}\left(E\right)}\wedge\gamma^{\prime\prime}+\left(\theta^{\textrm{End}\left(E\right)}\right)^*\wedge\gamma^\prime\right)=0,
\end{align}
where $\Theta=\theta+\theta^*$. Next, the set $\left\{\nabla^{\textrm{End}\left(E\right)}g+\left[\Theta,g\right]\middle|g\in\Omega^0\bigl(\textrm{Herm}_{\textrm{skew}}\left(E,h\right)\bigr)\right\}$ is considered as the tangent space $T_D\bigl(D\cdot\mathcal{G}\left(E,h\right)\bigr)$ at $D$. Hence its ``orthogonal complement'' is given by $\left\{\beta+\gamma\in\Omega^1\bigl(\textrm{End}\left(E\right)\bigr)\middle|\left(d^{\nabla^{\textrm{End}\left(E\right)}}\right)^*\beta+\left(\Theta^{\textrm{End}\left(E\right)}\wedge\right)^*\gamma=0\right\}$. Note that the orthogonal complement is with respect to the real inner product which is given by, for $\alpha_1,\alpha_2\in\Omega^1\bigl(\textrm{End}\left(E\right)\bigr)$,
\begin{equation}
  \langle\alpha_1,\alpha_2\rangle:=\frac{1}{2}\int_M\left(h_{\textrm{End}\left(E\right)\otimes\Lambda^1M}\left(\alpha_1,\alpha_2\right)+h_{\textrm{End}\left(E\right)\otimes\Lambda^1M}\left(\alpha_1,\alpha_2\right)\right)\frac{\omega^n}{n!}. \label{eq:3.3}
\end{equation}
Therefore there is the following $\mathbb{R}$-linear isomorphism:
\begin{equation}
  H_D^1\simeq\left\{\beta+\gamma\in\Omega^1\bigl(\textrm{End}\left(E\right)\bigr)\middle|
  \begin{gathered}
    D_{\textrm{End}\left(E\right)}^{\prime\prime}\left(\beta^{\prime\prime}+\gamma^\prime\right)=0,\\
    \Lambda\left(\nabla^{\textrm{End}\left(E\right)}\beta+\left[\Theta,\gamma\right]\right)=0,\\
    \left(d^{\nabla^{\textrm{End}\left(E\right)}}\right)^*\beta+\left(\Theta^{\textrm{End}\left(E\right)}\wedge\right)^*\gamma=0 
  \end{gathered}\right\}
\end{equation}
By K\"{a}hler identities for Higgs bundles (see \cite[p.15]{Simpson2}), we have
\begin{align}
  &\left(d^{\nabla^{\textrm{End}\left(E\right)}}\right)^*\beta=\sqrt{-1}\Lambda\left(\bar{\partial}^{\textrm{End}\left(E\right)}\beta^\prime-\partial^{\textrm{End}\left(E\right)}\beta^{\prime\prime}\right),\\
  &\left(\Theta^{\textrm{End}\left(E\right)}\wedge\right)^*\gamma=-\sqrt{-1}\Lambda\left(\left(\theta^{\textrm{End}\left(E\right)}\right)^*\wedge\gamma^\prime-\theta^{\textrm{End}\left(E\right)}\wedge\gamma^{\prime\prime}\right).
\end{align}
Thus we have
\begin{align}
  &\left\{
  \begin{aligned}
    &\Lambda\left(\nabla^{\textrm{End}\left(E\right)}\beta+\left[\Theta,\gamma\right]\right)=0,\\
    &\left(d^{\nabla^{\textrm{End}\left(E\right)}}\right)^*\beta+\left(\Theta^{\textrm{End}\left(E\right)}\wedge\right)^*\gamma=0\\
  \end{aligned}\right.\\
  \Longleftrightarrow&\left(\bar{\partial}^{\textrm{End}\left(E\right)}\right)^*\beta^{\prime\prime}+\left(\theta^{\textrm{End}\left(E\right)}\wedge\right)^*\gamma^\prime=0.
\end{align}
Combining these, we obtain
\begin{align}
  H_D^1&\simeq\left\{\beta^{\prime\prime}+\gamma^\prime\in\Omega^1\bigl(\textrm{End}\left(E\right)\bigr)\middle|
  \begin{gathered}
    D_{\textrm{End}\left(E\right)}^{\prime\prime}\left(\beta^{\prime\prime}+\gamma^\prime\right)=0\\
    \left(\bar{\partial}^{\textrm{End}\left(E\right)}\right)^*\beta^{\prime\prime}+\left(\theta^{\textrm{End}\left(E\right)}\wedge\right)^*\gamma^\prime=0
  \end{gathered}\right\}\\
  &\simeq\left\{\alpha\in\Omega^1\bigl(\textrm{End}\left(E\right)\bigr)\middle|
  \begin{gathered}
    D_{\textrm{End}\left(E\right)}^{\prime\prime}\alpha=0,\\
    \left(D_{\textrm{End}\left(E\right)}^{\prime\prime}\right)^*\alpha=0
  \end{gathered}\right\}. \label{eq:3.4}
\end{align}
The vector space \eqref{eq:3.4} is the space of infinitesimal deformations of a Higgs structure $D^{\prime\prime}$ (cf. Appendix). Thus since the space \eqref{eq:3.4} has a natural complex structure, we obtain that the tangent space of $\mathscr{M}_{\textrm{HYM}}$ also has a complex structure through the above isomorphism.
\section{Deformation Complex of HYM Connections}
In this section, we construct a deformation complex based on a discussion in section $3$. Let $\left(M,g\right)$ be an $n$-dimensional compact Kähler manifold, $\omega$ be the K\"{a}hler form and $\left(E,h\right)$ be a smooth Hermitian vector bundle of rank $r$ over $M$. We define for $k,p,q\in\mathbb{Z}_{\ge 0}$,
\begin{gather}
  \mathscr{B}^k:=\Omega^k\bigl(\textrm{Herm}_{\textrm{skew}}\left(E,h\right)\bigr),\quad\mathscr{C}^k:=\Omega^k\bigl(\textrm{End}\left(E\right)\bigr),\\
  \mathscr{B}^{p,q}:=\Omega^{p,q}\bigl(\textrm{Herm}_{\textrm{skew}}\left(E,h\right)\bigr)=\Omega^0\bigl(\textrm{Herm}_{\textrm{skew}}\left(E,h\right)\otimes_\mathbb{R}\Lambda^{p,q}M\bigr)
\end{gather}
and define
\begin{align}
  &\mathscr{B}^2_+:=\mathscr{B}^2\cap\left(\mathscr{B}^{2,0}\oplus\mathscr{B}^{0,2}\oplus\mathscr{B}^0\omega\right),\\
  &\mathscr{B}^2_-:=\left\{\beta^{1,1}\in\mathscr{B}^{1,1}\middle|\beta^{1,1}=\overline{\beta^{1,1}},\Lambda\beta^{1,1}=0\right\}.
\end{align}
Since $\mathscr{B}^2=\mathscr{B}^2_+\oplus\mathscr{B}^2_-$ holds, we define an operator $P_0$ as the projection from $\mathscr{B}^2$ to $\mathscr{B}^0$.

For $D=\nabla^E+\Theta\in\mathscr{Y}\left(E,h\right)$, consider the following sequence:
\begin{equation}
  \left(\mathscr{B}^*\right):\xymatrix{0\ar[r]&\mathscr{B}^0\ar[r]^-{D_{\textrm{End}\left(E\right)}}&\mathscr{C}^1\ar[r]^-{D_0}&\mathscr{B}^0\oplus\mathscr{C}^2\ar[r]^-{D_2}&\mathscr{C}^3\ar[r]^-{D_{\textrm{End}\left(E\right)}^{\prime\prime}}&\cdots\ar[r]^-{D_{\textrm{End}\left(E\right)}^{\prime\prime}}&\mathscr{C}^{2n}\ar[r]&0}
\end{equation}
where $D_0:\mathscr{C}^1\to\mathscr{B}^0\oplus\mathscr{C}^2$ is defined by, for $\alpha_1=\beta_1+\gamma_1\in\Omega^1\bigl(\textrm{Herm}_{\textrm{skew}}\left(E,h\right)\bigr)\oplus\Omega^1\bigl(\textrm{Herm}\left(E,h\right)\bigr)$,
\begin{equation}
  D_0\left(\alpha_1\right)=\biggl(P_0\left(d^{\nabla^{\textrm{End}\left(E\right)}}\beta_1+\Theta^{\textrm{End}\left(E\right)}\wedge\gamma_1\right),D_{\text{End}\left(E\right)}^{\prime\prime}\left(\beta_1^{\prime\prime}+\gamma_1^\prime\right)\biggr)
\end{equation}
and $D_2:\mathscr{B}^0\oplus\mathscr{C}^2\to\mathscr{C}^3$ is defined by, for $\left(\varphi,\alpha_2\right)\in\mathscr{B}^0\oplus\mathscr{C}^2,D_2\left(\varphi,\alpha_2\right)=D_{\textrm{End}\left(E\right)}^{\prime\prime}\left(\alpha_2\right)$. By a straightforward calculation, we can prove $\left(\mathscr{B}^*\right)$ is a complex. Therefore we will show that $\left(\mathscr{B}^*\right)$ is an elliptic complex.
\begin{theorem}
  \textit{The sequence} $\left(\mathscr{B}^*\right)$ \textit{is an elliptic complex.}
\end{theorem}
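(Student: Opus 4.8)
The plan is to verify ellipticity by showing that the principal symbol sequence of $\left(\mathscr{B}^*\right)$ is exact at every $x\in M$ and every nonzero real covector $\xi\in T_x^*M$. Every map in $\left(\mathscr{B}^*\right)$ is a first order differential operator, and the only differential ingredients are $\nabla^{\textrm{End}\left(E\right)}$ and $\bar{\partial}^{\textrm{End}\left(E\right)}$: the bracket and wedge terms built from $\Theta$ and $\theta^{\textrm{End}\left(E\right)}$ are of order zero and drop out of the symbols, while the bundle maps $P_0$ and $P_1$, being algebraic, simply persist. Writing $\xi=\xi^{1,0}+\xi^{0,1}$ with $\xi^{0,1}=\overline{\xi^{1,0}}\ne 0$ and writing $\sigma_\xi$ for the principal symbol at $\xi$, one gets $\sigma_\xi\left(D_{\textrm{End}\left(E\right)}\right)g=\xi\otimes g$, $\sigma_\xi\left(D_{\textrm{End}\left(E\right)}^{\prime\prime}\right)\alpha=\xi^{0,1}\wedge\alpha$ (and likewise for $D_2$ on its $\mathscr{C}^2$ slot), and, using $P_1$ to present $\alpha_1\in\mathscr{C}^1$ as $\beta_1+\gamma_1$ with $\beta_1$ skew-Hermitian and $\gamma_1$ Hermitian, $\sigma_\xi\left(D_0\right)\alpha_1=\bigl(P_0\left(\xi\wedge\beta_1\right),\ \xi^{0,1}\wedge\left(\beta_1^{\prime\prime}+\gamma_1^\prime\right)\bigr)$, all up to the irrelevant factor $\sqrt{-1}$.

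First I would dispose of the tail. For the fixed nonzero covector $\xi^{0,1}\in\Lambda^{0,1}T_x^*M$, left exterior multiplication $\xi^{0,1}\wedge\cdot$ on $\Lambda^{\bullet,\bullet}T_x^*M\otimes\textrm{End}\left(E\right)_x$ is an acyclic complex, because contraction with the $\left(1,0\right)$-vector metrically dual to $\xi^{0,1}$ is a contracting homotopy; this is precisely the linear algebra underlying ellipticity of the Dolbeault complex, hence of the Dolbeault-Higgs complex. Since the symbols of $D_2$ and of each $D_{\textrm{End}\left(E\right)}^{\prime\prime}:\mathscr{C}^k\to\mathscr{C}^{k+1}$, $k\ge 3$, are $\xi^{0,1}\wedge\cdot$, this gives exactness of the symbol sequence at $\mathscr{C}^3,\dots,\mathscr{C}^{2n}$ and identifies $\ker\sigma_\xi\left(D_2\right)=\mathscr{B}^0\oplus\bigl(\xi^{0,1}\wedge\left(\Lambda^1T_x^*M\otimes\textrm{End}\left(E\right)_x\right)\bigr)$.

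It then remains to treat the head. Exactness at $\mathscr{B}^0$ is immediate, since $g\mapsto\xi\otimes g$ is injective. For exactness at $\mathscr{C}^1$: if $\sigma_\xi\left(D_0\right)\alpha_1=0$, the second component gives $\xi^{0,1}\wedge\left(\beta_1^{\prime\prime}+\gamma_1^\prime\right)=0$, and comparing bidegrees with the Koszul computation forces $\gamma_1^\prime=0$ and $\beta_1^{\prime\prime}=\xi^{0,1}\otimes w$ for some $w\in\textrm{End}\left(E\right)_x$; skew-Hermiticity of $\beta_1$ then gives $\beta_1^{1,0}=-\xi^{1,0}\otimes w^*$, hence $\xi\wedge\beta_1=\left(\xi^{1,0}\wedge\xi^{0,1}\right)\otimes\left(w+w^*\right)$, and since $\Lambda\left(\xi^{1,0}\wedge\xi^{0,1}\right)\ne 0$ the vanishing of the first component forces $w+w^*=0$, so $w$ is skew-Hermitian and $\alpha_1=\xi\otimes w=\sigma_\xi\left(D_{\textrm{End}\left(E\right)}\right)w$. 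For exactness at $\mathscr{B}^0\oplus\mathscr{C}^2$: the second slot of $\sigma_\xi\left(D_0\right)$ surjects onto $\xi^{0,1}\wedge\left(\Lambda^1T_x^*M\otimes\textrm{End}\left(E\right)_x\right)$ because $\beta_1^{\prime\prime}+\gamma_1^\prime$ ranges over all of $\Lambda^1T_x^*M\otimes\textrm{End}\left(E\right)_x$ under $P_1$; restricting $\sigma_\xi\left(D_0\right)$ to the kernel of that slot, where $\gamma_1^\prime=0$ and $\beta_1^{\prime\prime}=\xi^{0,1}\otimes w$ with $w$ now arbitrary, the first slot becomes, up to a nonzero constant, essentially $w\mapsto w+w^*$, which sweeps out all of $\mathscr{B}^0$. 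Running an arbitrary element of $\ker\sigma_\xi\left(D_2\right)$ through these two observations places it in $\textrm{im}\,\sigma_\xi\left(D_0\right)$, which gives exactness.

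I expect the only real difficulty to lie in this folded head, where one must carefully track the $P_1$-correspondence between $\alpha_1$ and $\left(\beta_1^{\prime\prime},\gamma_1^\prime\right)$, the Hermitian versus skew-Hermitian splitting, the bidegrees, and the interaction of $P_0$ with $\xi\wedge\left(\cdot\right)$ --- in particular the elementary but essential fact that $\sqrt{-1}$ times a Hermitian endomorphism is skew-Hermitian, which is what keeps $P_0\left(\xi\wedge\beta_1\right)$ inside $\mathscr{B}^0$. The tail is formal once the Koszul acyclicity is recorded. A useful consistency check is the case $n=1$: then $\mathscr{C}^3=0$, there is no tail, and the argument collapses to surjectivity of $\sigma_\xi\left(D_0\right)$, recovering Hitchin's elliptic complex on a compact Riemann surface.
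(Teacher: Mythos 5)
Your proposal is correct and follows the same overall strategy as the paper: compute the first-order symbols (noting that the $\Theta$- and $\theta$-terms are zeroth order), dispose of the tail $\mathscr{C}^3\to\cdots\to\mathscr{C}^{2n}$ by Koszul acyclicity of $\xi^{0,1}\wedge\cdot$, and analyze the folded head by hand. The one genuine divergence is at the middle term $\mathscr{B}^0\oplus\mathscr{C}^2$: the paper proves exactness there \emph{indirectly}, by checking that the alternating sum of the fibre dimensions of the symbol sequence vanishes and then invoking exactness at every other spot, whereas you prove it \emph{directly}, identifying $\ker\sigma_\xi\left(D_2\right)=\mathscr{B}^0_p\oplus\bigl(\xi^{0,1}\wedge\left(\Lambda^1T_x^*M\otimes\textrm{End}\left(E\right)_x\right)\bigr)$ and showing that $\sigma_\xi\left(D_0\right)$ surjects onto it (the second slot because $P_1$ is bijective, the first because $w\mapsto w+w^*$ sweeps out the Hermitian endomorphisms on the kernel of the second slot). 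The dimension count is shorter; your argument costs a few more lines but exhibits the image explicitly and, as you note, degenerates gracefully to Hitchin's complex when $n=1$. A second, smaller difference is at $\mathscr{C}^1$: the paper asserts $\beta^{\prime\prime}=\xi^{\prime\prime}\otimes\varphi$ with $\varphi\in\mathscr{B}^0_p$ using only the second symbol component, but skew-Hermiticity of $\varphi$ is precisely what requires the vanishing of the $P_0$-component, as you correctly extract from $\xi\wedge\beta_1=\left(\xi^{1,0}\wedge\xi^{0,1}\right)\otimes\left(w+w^*\right)$ together with $\Lambda\left(\xi^{1,0}\wedge\xi^{0,1}\right)\neq 0$; so on this point your write-up supplies a step the paper leaves implicit.
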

\begin{proof}
  We calculate the symbols of $D_{\textrm{End}\left(E\right)}:\mathscr{B}^0\to\mathscr{C}^1,D_0:\mathscr{B}^1\to\mathscr{B}^0\oplus\mathscr{C}^2$ and $D_2:\mathscr{B}^0\oplus\mathscr{C}^2\to\mathscr{C}^3$. Since these are differential operators of order 1, for $p\in M,\xi\in T_p^*M\setminus\left\{0\right\}$, we have
  \begin{align}
    \left\{
      \begin{aligned}
        &\sigma_1\left(D_{\textrm{End}\left(E\right)}\right)\left(\xi\right)\left(\varphi\right)=\xi\otimes\varphi,&&\varphi\in\mathscr{B}_p^0\\
        &\sigma_1\left(D_0\right)\left(\xi\right)\left(\alpha_1\right)=\bigl(P_0\left(\xi\wedge\beta_1\right),\xi^{\prime\prime}\wedge\left(\beta_1^{\prime\prime}+\gamma_1^\prime \right)\bigr),&&\alpha_1\in\mathscr{C}_p^1\\
        &\sigma_1\left(D_2\right)\left(\xi\right)\left(\varphi,\alpha_2\right)=\xi^{\prime\prime}\wedge\alpha_2,&&\varphi\in\mathscr{B}_p^0,\alpha_2\in\mathscr{C}_p^2.
      \end{aligned}
    \right.
  \end{align}
  Since
  \begin{equation}
    \xymatrix{0\ar[r]&\mathscr{B}_p^0\ar[rr]^-{\sigma_1\left(D_{\textrm{End}\left(E\right)}\right)\left(\xi\right)}&&\mathscr{C}_p^1}
  \end{equation}
  and
  \begin{equation}
    \xymatrix{\mathscr{C}_p^3\ar[rr]^-{\sigma_1\left(D_{\textrm{End}\left(E\right)}^{\prime\prime}\right)\left(\xi\right)}&&\mathscr{C}_p^4\ar[rr]^-{\sigma_1\left(D_{\textrm{End}\left(E\right)}^{\prime\prime}\right)\left(\xi\right)}&&\cdots\ar[rr]^-{\sigma_1\left(D_{\textrm{End}\left(E\right)}^{\prime\prime}\right)\left(\xi\right)}&&\mathscr{C}_p^{2n}\ar[r]&0}
  \end{equation}
  are exact, we will show that
  \begin{equation}
    \xymatrix{\mathscr{B}_p^0\ar[rr]^-{\sigma_1\left(D_{\textrm{End}\left(E\right)}\right)\left(\xi\right)}&&\mathscr{C}_p^1\ar[rr]^-{\sigma_1\left(D_0\right)\left(\xi\right)}&&\mathscr{B}_p^0\oplus\mathscr{C}_p^2\ar[rr]^-{\sigma_1\left(D_2\right)\left(\xi\right)}&&\mathscr{C}_p^3\ar[rr]^-{\sigma_1\left(D_{\textrm{End}\left(E\right)}^{\prime\prime}\right)\left(\xi\right)}&&\mathscr{C}_p^4}
  \end{equation}
  is exact.
  
  First, let $\alpha\in\mathscr{C}_p^1$ satisfy that
  \begin{equation}
    \sigma_1\left(D_0\right)\left(\xi\right)\left(\alpha\right)=0 \label{eq:4.2}
  \end{equation}
  and we decompose it as
  \begin{gather}
    \alpha=\beta+\gamma\in\bigl(\textrm{Herm}_{\textrm{skew}}\left(E,h\right)\otimes\Lambda^1M\bigr)_p\oplus\bigl(\textrm{Herm}\left(E,h\right)\otimes\Lambda^1M\bigr)_p
  \end{gather}
  From \eqref{eq:4.2}, we have $\xi^{\prime\prime}\wedge\left(\beta^{\prime\prime}+\gamma^{\prime}\right)=0$. Thus we obtain $\xi^{\prime\prime}\wedge\beta^{\prime\prime}=0$ and $\xi^{\prime\prime}\wedge\gamma^\prime=0$. Therefore there exists a $\varphi\in\mathscr{B}_p^0$ such that $\beta^{\prime\prime}=\xi^{\prime\prime}\otimes\varphi$ and $\gamma^\prime=0$. Since $\gamma^{\prime\prime}=\left(\gamma^\prime\right)^*=0$, we conclude that $\alpha=\beta$ and
  \begin{equation}
    \alpha=\beta=-\left(\beta^{\prime\prime}\right)^*+\beta^{\prime\prime}=\left(\xi^\prime+\xi^{\prime\prime} \right)\otimes\varphi=\sigma_1\left(D_{\textrm{End}\left(E\right)}\right)\left(\xi\right)\left(\varphi\right).
  \end{equation}
  Therefore the sequence
  \begin{equation}
    \xymatrix{\mathscr{B}_p^0\ar[rr]^-{\sigma_1\left(D_{\textrm{End}\left(E\right)}\right)\left(\xi\right)}&&\mathscr{C}_p^1\ar[rr]^-{\sigma_1\left(D_0\right)\left(\xi\right)}&&\mathscr{B}_p^0\oplus\mathscr{C}_p^2}
  \end{equation}
  is exact.
  
  Next, for $\alpha\in\mathscr{C}_p^3$ satisfying $\sigma_1\left(D_{\textrm{End}\left(E\right)}^{\prime\prime}\right)\left(\xi\right)\left(\alpha\right)=0$, there exists $\beta\in\mathscr{C}_p^2$ such that $\alpha=\xi^{\prime\prime}\wedge\beta$. Thus we can express $\alpha$ as $\alpha=\sigma_1\left(D_2\right)\left(\xi\right)\left(0,\beta\right)$ and the sequence
  \begin{equation}
    \xymatrix{\mathscr{B}_p^0\oplus\mathscr{C}_p^2\ar[rr]^-{\sigma_1\left(D_2\right)\left(\xi\right)}&&\mathscr{C}_p^3\ar[rr]^-{\sigma_1\left(D_{\textrm{End}\left(E\right)}^{\prime\prime}\right)\left(\xi\right)}&&\mathscr{C}_p^4}
  \end{equation}
  is exact.
  
  Finally, since we have
  \begin{align}
    &\dim\mathscr{B}_p^0-\dim\mathscr{C}_p^1+\dim\mathscr{B}_p^0+\dim\mathscr{C}_p^2-\dim\mathscr{C}_p^3+\cdots+\left(-1\right)^{2n}\dim\mathscr{C}_p^{2n}\\
    =&r^2\left(1-2\cdot\dbinom{2n}{1}+1+2\cdot\dbinom{2n}{2}+2\sum_{i=3}^{2n}\left(-1\right)^i\dbinom{2n}{i}\right)=0,
  \end{align}
  the sequence
  \begin{equation}
    \xymatrix{\mathscr{C}_p^1\ar[rr]^-{\sigma_1\left(D_0\right)\left(\xi\right)}&&\mathscr{B}_p^0\oplus\mathscr{C}_p^2\ar[rr]^-{\sigma_1\left(D_2\right)\left(\xi\right)}&&\mathscr{C}_p^3}
  \end{equation}
  is exact. From these, $\left(\mathscr{B}^*\right)$ is an elliptic complex.
\end{proof}
The elliptic complex $\left(\mathscr{B}^*\right)$ is considered as a generalization of an elliptic complex obtained by Kim \cite[p.$226$]{Kobayashi}. \if In fact, for $D=\nabla+\Theta\in\mathscr{Y}\left(E,h\right)$, if $\Theta=0$, $\left(\mathscr{B}^*\right)$ is written as
\begin{figure}[H]
  \centering
  \begin{tikzpicture}
    \matrix[matrix of math nodes, row sep=0.5cm, column sep=1cm]
    {&&&|(DBDBD)|\left(\bar{\partial}^{\textrm{End}\left(E\right)}\beta_1^\prime+\partial^{\textrm{End}\left(E\right)}\beta_1^{\prime\prime},\bar{\partial}^{\textrm{End}\left(E\right)}\beta_1^{\prime\prime}\right)\\
    &&|(BB)|\beta_1+\sqrt{-1}\beta_2&|(DBDB)|\left(P_0\left(\nabla^{\textrm{End}\left(E\right)}\beta_1\right),\bar{\partial}^{\textrm{End}\left(E\right)}\beta_1^{\prime\prime}\right)\\
    |(ZERO)|0&|(B)|\mathscr{B}^0&|(E)|\mathscr{B}^1\oplus\sqrt{-1}\mathscr{B}^1&|(BC)|\mathscr{B}\oplus\mathscr{C}^2&|(CCC)|\mathscr{C}^3&|(DOT)|\cdots\\
    &|(V)|\varphi&|(DV)|\nabla^{\textrm{End}\left(E\right)}\varphi+0&|(BG)|\left(\varphi^\prime,\gamma\right)&|(DG)|\bar{\partial}^{\textrm{End}\left(E\right)}\gamma\\};
    \path[-stealth]
    (DBDBD)edge[draw=none]node[align=center,rotate=90]{$=$}(DBDB)
    (BB)edge[|->](DBDB)
    (E)edge[draw=none]node[align=center,rotate=-90]{$\in$}(BB)
    (BC)edge[draw=none]node[align=center,rotate=-90]{$\in$}(DBDB)
    (ZERO)edge[->](B)
    (B)edge[->](E)
    (E)edge[->](BC)
    (BC)edge[->](CCC)
    (CCC)edge[->](DOT)
    (B)edge[draw=none]node[align=center,rotate=90]{$\in$}(V)
    (E)edge[draw=none]node[align=center,rotate=90]{$\in$}(DV)
    (BC)edge[draw=none]node[align=center,rotate=90]{$\in$}(BG)
    (CCC)edge[draw=none]node[align=center,rotate=90]{$\in$}(DG)
    (V)edge[|->](DV)
    (BG)edge[|->](DG);
  \end{tikzpicture}
\end{figure}
where $\nabla^{\textrm{End}\left(E\right)}=\partial^{\textrm{End}\left(E\right)}+\bar{\partial}^{\textrm{End}\left(E\right)}$.\fi

Let $H_D^k\left(\mathscr{B}^*\right)$ be the $k$-th cohomology group of $\left(\mathscr{B}^*\right)$. Then the first cohomology group $H^1_D\left(\mathscr{B}^*\right)$ coincides with the space of infinitesimal deformations $H_D^1$.

Next, we describe the relationships between the cohomology groups of the elliptic complex $\left(\mathscr{C}^*\right)$ defined in Appendix and those of $\left(\mathscr{B}^*\right)$, which will be used in Section $5$ and $6$. To describe the relationships, we use the following lemma.
\begin{lemma}\ 

  \begin{enumerate}[$\left(1\right)$]
    \item \textit{For} $\beta_1,\beta_2\in\Omega^1\bigl(\textrm{Herm}_{\textrm{skew}}\left(E,h\right)\bigr)$, \textit{we have}
    \begin{equation}
      g_{\textrm{Herm}_{\textrm{skew}}\left(E,h\right)\otimes\Lambda^1M}\left(\beta_1,\beta_2\right)=2g_{\textrm{End}\left(E\right)\otimes\Lambda^{0,1}M}\left(\beta_1^{\prime\prime},\beta_2^{\prime\prime}\right).
    \end{equation}
    \item \textit{For} $\gamma_1,\gamma_2\in\Omega^1\bigl(\textrm{Herm}\left(E,h\right)\bigr)$, \textit{we have}
    \begin{equation}
      g_{\textrm{Herm}\left(E,h\right)\otimes\Lambda^1M}\left(\gamma_1,\gamma_2\right)=2g_{\textrm{End}\left(E\right)\otimes\Lambda^{1,0}M}\left(\gamma_1^\prime,\gamma_2^\prime\right).
    \end{equation}
    \item \textit{With respect to the real inner product} $\langle\cdot,\cdot\rangle$ \textit{on} $\mathscr{C}^1$ \textit{defined by} \eqref{eq:3.3}, \textit{the adjoint} $P_1^*$ \textit{of the bijection} $P_1$ \textit{defined as in} \eqref{fig:1} \textit{coincides with} $\displaystyle\frac{1}{2}P_1^{-1}$.
  \end{enumerate}
\end{lemma}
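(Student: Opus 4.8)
The plan is to prove the three assertions in turn, all by the same mechanism: decompose an $\textrm{End}(E)$-valued $1$-form into its $(1,0)$- and $(0,1)$-components and exploit the fibrewise reality constraints satisfied by skew-Hermitian- and Hermitian-valued forms. For (1), I would first record that a skew-Hermitian-valued $1$-form $\beta$ satisfies $\beta' = -(\beta'')^*$ pointwise --- the identity already used when checking ellipticity of $(\mathscr{B}^*)$, where one writes $\beta = -(\beta'')^* + \beta''$ --- so that $\beta$ is recovered from $\beta''$. Working fibrewise (a local unitary frame for $(E,h)$ together with a local coframe makes everything explicit), the orthogonality of $\Lambda^{1,0}M$ and $\Lambda^{0,1}M$ inside the complexified cotangent bundle and the compatibility of the metric on $\textrm{End}(E)\otimes\Lambda^1M$ with this splitting give $g(\beta_1,\beta_2) = g(\beta_1',\beta_2') + g(\beta_1'',\beta_2'')$; and since the fibrewise adjoint $A\mapsto A^*$ is an isometry of $\textrm{End}(E)$ and complex conjugation is an isometry $\Lambda^{1,0}M\to\Lambda^{0,1}M$ (the Kähler metric being Hermitian), the relations $\beta_i' = -(\beta_i'')^*$ yield $g(\beta_1',\beta_2') = g(\beta_1'',\beta_2'')$, so the two equal terms produce the factor $2$. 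Assertion (2) is the verbatim argument with the constraint $\gamma' = (\gamma'')^*$ for a Hermitian-valued form $\gamma$; in both cases it suffices, by polarization, to treat the diagonal case $\beta_1=\beta_2$ (resp. $\gamma_1=\gamma_2$).

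For (3), I would first reduce the claim $P_1^* = \tfrac12 P_1^{-1}$ to the single identity $\langle P_1\alpha_1, P_1\alpha_2\rangle = \tfrac12\langle\alpha_1,\alpha_2\rangle$ for all $\alpha_1,\alpha_2\in\mathscr{C}^1$: putting $\alpha_2 = P_1^{-1}\mu$ rewrites it as $\langle P_1\alpha_1,\mu\rangle = \tfrac12\langle\alpha_1,P_1^{-1}\mu\rangle$, which is exactly $P_1^* = \tfrac12 P_1^{-1}$ since $P_1$ is a bijection. To prove the identity, decompose $\alpha_i = \beta_i + \gamma_i$ according to $\textrm{End}(E) = \textrm{Herm}_{\textrm{skew}}(E,h)\oplus\textrm{Herm}(E,h)$, so that $P_1\alpha_i = \beta_i'' + \gamma_i'$. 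The mixed pairings $\langle\beta_1'',\gamma_2'\rangle$ and $\langle\gamma_1',\beta_2''\rangle$ vanish because they pair a $(0,1)$-form against a $(1,0)$-form, while $\langle\beta_1,\gamma_2\rangle$ and $\langle\gamma_1,\beta_2\rangle$ vanish because $\textrm{Herm}_{\textrm{skew}}(E,h)$ and $\textrm{Herm}(E,h)$ are orthogonal fibrewise (the Hilbert--Schmidt pairing of a skew-Hermitian and a Hermitian endomorphism is purely imaginary, hence has zero real part). What remains is $\langle P_1\alpha_1,P_1\alpha_2\rangle = \langle\beta_1'',\beta_2''\rangle + \langle\gamma_1',\gamma_2'\rangle$, and the integrated forms of (1) and (2) turn this into $\tfrac12\langle\beta_1,\beta_2\rangle + \tfrac12\langle\gamma_1,\gamma_2\rangle = \tfrac12\langle\alpha_1,\alpha_2\rangle$, as required.

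None of the three parts conceals a real difficulty; the one delicate point is the bookkeeping with conventions --- fixing the fibrewise reality identities $\beta' = -(\beta'')^*$ and $\gamma' = (\gamma'')^*$, and verifying that the fibre metric on $\textrm{End}(E)\otimes\Lambda^1M$ restricts orthogonally and with the correct normalization to its $(1,0)$- and $(0,1)$-summands --- so that the factors $\tfrac12$ and $2$ come out in the right places. Once those normalizations are pinned down, (1) and (2) are each essentially a one-line computation, and (3) is the reduction above together with the two orthogonality observations.
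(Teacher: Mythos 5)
Your argument is correct and is exactly the ``direct calculation'' the paper alludes to when it omits the proof: the reality constraints $\beta'=-\left(\beta''\right)^*$ and $\gamma'=\left(\gamma''\right)^*$, the orthogonality of the $\left(1,0\right)$- and $\left(0,1\right)$-summands and of $\textrm{Herm}_{\textrm{skew}}\left(E,h\right)$ against $\textrm{Herm}\left(E,h\right)$ under the real (i.e.\ real-part) inner product, and the reduction of $(3)$ to $\langle P_1\alpha_1,P_1\alpha_2\rangle=\tfrac12\langle\alpha_1,\alpha_2\rangle$ are all the right ingredients, and the factors of $2$ come out as you describe. No gaps.
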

Since this lemma can be shown by a direct calculation, we omit a proof.
\begin{theorem}\label{thm:4.4}
  \textit{For} $D\in\mathscr{Y}\left(E,h\right)$, \textit{the following properties hold between the cohomology groups of} $\left(\mathscr{B}^*\right)$ \textit{and those of} $\left(\mathscr{C}^*\right)$.
  \begin{enumerate}[$\left(1\right)$]
    \item $H_D^0\left(\mathscr{B}^*\right)\otimes\mathbb{C}\simeq H_{D^{\prime\prime}}^0\left(\mathscr{C}^*\right)$.
    \item $H_D^1\left(\mathscr{B}^*\right)\simeq H_{D^{\prime\prime}}^1\left(\mathscr{C}^*\right)$.
    \item $H_D^2\left(\mathscr{B}^*\right)\simeq H_{D^{\prime\prime}}^2\left(\mathscr{C}^*\right)\oplus H_D^0\left(\mathscr{B}^*\right)$.
    \item $H_D^q\left(\mathscr{B}^*\right)\simeq H_{D^{\prime\prime}}^q\left(\mathscr{C}^*\right),\,3\leq q$.
  \end{enumerate}
\end{theorem}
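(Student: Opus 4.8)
The plan is to prove the four isomorphisms separately, combining Hodge theory for the two elliptic complexes (ellipticity of $\left(\mathscr{B}^*\right)$ is the preceding theorem; ellipticity of $\left(\mathscr{C}^*\right)$ is established in the Appendix), the explicit description of $H^1_D$ obtained in Section~3, and a direct comparison of $\left(\mathscr{B}^*\right)$ and $\left(\mathscr{C}^*\right)$ in high degrees. Here $\left(\mathscr{C}^*\right)$ is the Dolbeault--Higgs complex $0\to\mathscr{C}^0\to\mathscr{C}^1\to\dots\to\mathscr{C}^{2n}\to0$ with every differential $D^{\prime\prime}_{\textrm{End}\left(E\right)}$. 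I write $d^0=D_{\textrm{End}\left(E\right)}\colon\mathscr{B}^0\to\mathscr{C}^1$ and, following the definition of $D_0$, split $D_0=\bigl(f,\ D^{\prime\prime}_{\textrm{End}\left(E\right)}\circ P_1\bigr)$, where $f\left(\beta_1+\gamma_1\right)=P_0\bigl(d^{\nabla^{\textrm{End}\left(E\right)}}\beta_1+\Theta^{\textrm{End}\left(E\right)}\wedge\gamma_1\bigr)$; since $P_0$ is $\tfrac1n\Lambda$ under the identification $\mathscr{B}^0\omega\cong\mathscr{B}^0$, the linearised HYM constraint $\Lambda\bigl(\nabla^{\textrm{End}\left(E\right)}\beta+\left[\Theta,\gamma\right]\bigr)=0$ of Section~3 is exactly $f\left(\beta+\gamma\right)=0$. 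I also record the elementary fact that for $\varphi\in\mathscr{B}^0$ one has $P_1\left(d^0\varphi\right)=D^{\prime\prime}_{\textrm{End}\left(E\right)}\varphi$, since $d^0\varphi=\nabla^{\textrm{End}\left(E\right)}\varphi+\left[\Theta,\varphi\right]$ has skew-Hermitian part $\nabla^{\textrm{End}\left(E\right)}\varphi$ and Hermitian part $\left[\Theta,\varphi\right]$.

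\emph{Cases $\left(1\right)$, $\left(4\right)$, $\left(2\right)$.} For $\left(1\right)$, $H^0_D\left(\mathscr{B}^*\right)=\ker d^0$; decomposing $d^0\varphi$ into bidegrees and taking Hermitian adjoints shows that for $\varphi\in\Omega^0\bigl(\textrm{Herm}_{\textrm{skew}}\left(E,h\right)\bigr)$ the equation $d^0\varphi=0$ is equivalent to $D^{\prime\prime}_{\textrm{End}\left(E\right)}\varphi=0$, and since $\textrm{End}\left(E\right)=\textrm{Herm}_{\textrm{skew}}\left(E,h\right)\otimes_{\mathbb R}\mathbb C$ with $D^{\prime\prime}_{\textrm{End}\left(E\right)}$ $\mathbb C$-linear, $H^0_{D^{\prime\prime}}\left(\mathscr{C}^*\right)=\ker\bigl(D^{\prime\prime}_{\textrm{End}\left(E\right)}|_{\mathscr{C}^0}\bigr)=H^0_D\left(\mathscr{B}^*\right)\otimes\mathbb C$. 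For $\left(4\right)$, the truncations of $\left(\mathscr{B}^*\right)$ and $\left(\mathscr{C}^*\right)$ in degrees $\geq3$ agree term by term, and the incoming map $D_2\colon\mathscr{B}^0\oplus\mathscr{C}^2\to\mathscr{C}^3$ is $D^{\prime\prime}_{\textrm{End}\left(E\right)}$ on $\mathscr{C}^2$ and $0$ on $\mathscr{B}^0$, so $\textrm{im}\,D_2=\textrm{im}\bigl(D^{\prime\prime}_{\textrm{End}\left(E\right)}\colon\mathscr{C}^2\to\mathscr{C}^3\bigr)$; the isomorphisms then follow directly from the definition of cohomology. For $\left(2\right)$, Hodge theory identifies $H^1_D\left(\mathscr{B}^*\right)$ and $H^1_{D^{\prime\prime}}\left(\mathscr{C}^*\right)$ with their respective harmonic spaces, and the chain of isomorphisms culminating in \eqref{eq:3.4} gives $H^1_D\left(\mathscr{B}^*\right)=H^1_D\simeq\bigl\{\alpha\in\Omega^1\bigl(\textrm{End}\left(E\right)\bigr)\bigm| D^{\prime\prime}_{\textrm{End}\left(E\right)}\alpha=0,\ \bigl(D^{\prime\prime}_{\textrm{End}\left(E\right)}\bigr)^*\alpha=0\bigr\}$, which is precisely the degree-one harmonic space of $\left(\mathscr{C}^*\right)$, hence $\simeq H^1_{D^{\prime\prime}}\left(\mathscr{C}^*\right)$.

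\emph{Case $\left(3\right)$.} This is the substantive step. Working with harmonic representatives and using $P_1^*=\tfrac12P_1^{-1}$ (the preceding Lemma), $H^2_D\left(\mathscr{B}^*\right)\cong\ker D_2\cap\ker D_0^*=\bigl\{\left(\varphi,\mu\right)\in\mathscr{B}^0\oplus\mathscr{C}^2\bigm| D^{\prime\prime}_{\textrm{End}\left(E\right)}\mu=0,\ f^*\varphi+\tfrac12P_1^{-1}\bigl(D^{\prime\prime}_{\textrm{End}\left(E\right)}\bigr)^*\mu=0\bigr\}$. The assignment $\left(\varphi,\mu\right)\mapsto\left[\mu\right]$ is a homomorphism onto $H^2_{D^{\prime\prime}}\left(\mathscr{C}^*\right)$, surjective because any harmonic $\mu_0$ is hit by $\left(0,\mu_0\right)$; I would check that its kernel is isomorphic, via $\left(\varphi,\mu\right)\mapsto\varphi$ — the constraint forces $\mu=D^{\prime\prime}_{\textrm{End}\left(E\right)}\nu$ with $\bigl(D^{\prime\prime}_{\textrm{End}\left(E\right)}\bigr)^*D^{\prime\prime}_{\textrm{End}\left(E\right)}\nu=-2P_1f^*\varphi$, and the left-hand side determines $D^{\prime\prime}_{\textrm{End}\left(E\right)}\nu$ — to $K:=\bigl\{\varphi\in\mathscr{B}^0\bigm| P_1f^*\varphi\perp\ker\bigl(D^{\prime\prime}_{\textrm{End}\left(E\right)}|_{\mathscr{C}^1}\bigr)\bigr\}$, using that $\textrm{im}\bigl(\bigl(D^{\prime\prime}_{\textrm{End}\left(E\right)}\bigr)^*D^{\prime\prime}_{\textrm{End}\left(E\right)}\colon\mathscr{C}^1\to\mathscr{C}^1\bigr)=\bigl(\ker D^{\prime\prime}_{\textrm{End}\left(E\right)}|_{\mathscr{C}^1}\bigr)^\perp$. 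Applying $P_1^*=\tfrac12P_1^{-1}$ once more, $K=\bigl(f\bigl(P_1^{-1}\bigl(\ker D^{\prime\prime}_{\textrm{End}\left(E\right)}|_{\mathscr{C}^1}\bigr)\bigr)\bigr)^\perp$ inside $\mathscr{B}^0$. It then remains to prove $K=H^0_D\left(\mathscr{B}^*\right)$; granting this, the short exact sequence $0\to K\to H^2_D\left(\mathscr{B}^*\right)\to H^2_{D^{\prime\prime}}\left(\mathscr{C}^*\right)\to0$ splits (all terms being finite-dimensional) and yields~$\left(3\right)$.

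\emph{The identity $K=H^0_D\left(\mathscr{B}^*\right)$, and the main difficulty.} The key input is the Kähler identity of Section~3: adding its two displayed identities gives $d^{0*}\alpha=c_1\sqrt{-1}\,f\left(\alpha\right)+c_2\bigl(D^{\prime\prime}_{\textrm{End}\left(E\right)}\bigr)^*\left(P_1\alpha\right)$ with $c_1,c_2\neq0$; since $d^{0*}\alpha$ is skew-Hermitian-valued while $\sqrt{-1}\,f\left(\alpha\right)$ is Hermitian-valued (as $f\left(\alpha\right)\in\mathscr{B}^0$), taking Hermitian parts expresses $f\left(\alpha\right)$ as a nonzero multiple of $\sqrt{-1}\,\Pi_{\textrm{Herm}}\bigl(\bigl(D^{\prime\prime}_{\textrm{End}\left(E\right)}\bigr)^*\left(P_1\alpha\right)\bigr)$, where $\Pi_{\textrm{Herm}}$ is the projection onto Hermitian-valued forms. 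Hence, for $\varphi\in\mathscr{B}^0$, using the orthogonality of Hermitian and skew-Hermitian parts, the $\mathbb C$-linearity of $D^{\prime\prime}_{\textrm{End}\left(E\right)}$, and $P_1d^0\varphi=D^{\prime\prime}_{\textrm{End}\left(E\right)}\varphi$, one finds that $\langle f\left(\alpha\right),\varphi\rangle$ is a fixed nonzero multiple of $\langle P_1\alpha,\ \sqrt{-1}\,D^{\prime\prime}_{\textrm{End}\left(E\right)}\varphi\rangle$. If $\varphi\in H^0_D\left(\mathscr{B}^*\right)$ then $D^{\prime\prime}_{\textrm{End}\left(E\right)}\varphi=0$ by case~$\left(1\right)$, so $\langle f\left(\alpha\right),\varphi\rangle=0$ for every $\alpha$, giving $H^0_D\left(\mathscr{B}^*\right)\subseteq K$; conversely, if $\varphi\perp f\bigl(P_1^{-1}\bigl(\ker D^{\prime\prime}_{\textrm{End}\left(E\right)}|_{\mathscr{C}^1}\bigr)\bigr)$, the same formula shows $D^{\prime\prime}_{\textrm{End}\left(E\right)}\varphi$ is $L^2$-orthogonal to $\ker\bigl(D^{\prime\prime}_{\textrm{End}\left(E\right)}|_{\mathscr{C}^1}\bigr)$ — real and complex orthogonality coincide here since that kernel is a complex subspace — but $D^{\prime\prime}_{\textrm{End}\left(E\right)}\varphi$ also lies in $\textrm{im}\,D^{\prime\prime}_{\textrm{End}\left(E\right)}\subseteq\ker\bigl(D^{\prime\prime}_{\textrm{End}\left(E\right)}|_{\mathscr{C}^1}\bigr)$, forcing $D^{\prime\prime}_{\textrm{End}\left(E\right)}\varphi=0$ and thus $\varphi\in H^0_D\left(\mathscr{B}^*\right)$ by case~$\left(1\right)$; so $K\subseteq H^0_D\left(\mathscr{B}^*\right)$. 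The hard part of the whole argument is precisely this block: extracting the exact relation among $f$, $P_0$, $P_1$ and $\bigl(D^{\prime\prime}_{\textrm{End}\left(E\right)}\bigr)^*$ from the Kähler identities for Higgs bundles while correctly tracking every constant and factor of $\sqrt{-1}$ (the factors $2$ in the preceding Lemma and the normalisation of the real inner product \eqref{eq:3.3} both enter), and then using the Hermitian/skew-Hermitian parity to isolate $f$; once these identities are pinned down, everything else is formal.
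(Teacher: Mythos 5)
Your overall strategy (harmonic representatives, the $P_1$ dictionary, $P_1^*=\tfrac12 P_1^{-1}$, Kähler identities) is the same as the paper's, but there is a genuine gap in case $\left(1\right)$, and it propagates. The inclusion $H_D^0\left(\mathscr{B}^*\right)\otimes\mathbb{C}\subseteq\ker\bigl(D^{\prime\prime}_{\textrm{End}\left(E\right)}|_{\mathscr{C}^0}\bigr)$ is indeed the easy bidegree argument you give, but the reverse inclusion does not follow from ``$\textrm{End}\left(E\right)=\textrm{Herm}_{\textrm{skew}}\left(E,h\right)\otimes_{\mathbb{R}}\mathbb{C}$ and $D^{\prime\prime}$ is $\mathbb{C}$-linear.'' The kernel of a $\mathbb{C}$-linear operator on a complexified space equals the complexification of the kernel of its restriction to the real form only if that kernel is stable under the real structure, which here is $\varphi\mapsto-\varphi^*$. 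Concretely, writing $\varphi=\varphi_1+\sqrt{-1}\varphi_2$ with $\varphi_i$ skew-Hermitian, you need $D^{\prime\prime}_{\textrm{End}\left(E\right)}\varphi=0\Rightarrow D^{\prime\prime}_{\textrm{End}\left(E\right)}\varphi^*=0$, equivalently $D^{\prime}_{\textrm{End}\left(E\right)}\varphi=0$; and $D^{\prime\prime}\varphi=0$ only gives $\partial^{\textrm{End}\left(E\right)}\varphi^*=0$ and $\left[\theta,\varphi^*\right]=0$, not the conjugate conditions. This implication is false for a general Hermitian Higgs bundle — it is the Higgs analogue of ``holomorphic endomorphisms of a Hermitian--Einstein bundle are parallel'' — and it is exactly where the hypothesis $D\in\mathscr{Y}\left(E,h\right)$ must enter: from $D^{\prime\prime}\varphi=0$ and the HYM equation one gets $\sqrt{-1}\Lambda D^{\prime\prime}_{\textrm{End}\left(E\right)}D^{\prime}_{\textrm{End}\left(E\right)}\varphi=\bigl[\sqrt{-1}\Lambda R\left(D\right),\varphi\bigr]=0$, hence $\bigl(D^{\prime}_{\textrm{End}\left(E\right)}\bigr)^*D^{\prime}_{\textrm{End}\left(E\right)}\varphi=0$ by the Kähler identities, hence $D^{\prime}\varphi=0$ after integrating. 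Your proposal never uses $D\in\mathscr{Y}\left(E,h\right)$ in case $\left(1\right)$, which is the tell-tale sign something is missing.

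Case $\left(3\right)$ is organized differently from the paper (a surjection onto $H^2_{D^{\prime\prime}}\left(\mathscr{C}^*\right)$ whose kernel you characterize by an orthogonality condition, versus the paper's direct decoupling of the two equations in $D_0^*\left(\varphi+\alpha_2\right)=0$), and the bookkeeping with $K$ is sound in outline; but the entire case rests on the identity you defer, namely $P_1\bigl(f^*\varphi\bigr)=\textrm{const}\cdot\sqrt{-1}\,D^{\prime\prime}_{\textrm{End}\left(E\right)}\varphi$, i.e.\ the computation $P_1\Bigl(\bigl(d^{D_{\textrm{End}\left(E\right)}}\bigr)^*\left(\varphi\omega\right)\Bigr)=-\sqrt{-1}D^{\prime\prime}_{\textrm{End}\left(E\right)}\varphi$. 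Once that is in hand, the cleaner route is the paper's: the condition $D_0^*\left(\varphi+\alpha_2\right)=0$ becomes $-\tfrac{\sqrt{-1}}{n}D^{\prime\prime}_{\textrm{End}\left(E\right)}\varphi+\tfrac12\bigl(D^{\prime\prime}_{\textrm{End}\left(E\right)}\bigr)^*\alpha_2=0$, and applying $\bigl(D^{\prime\prime}_{\textrm{End}\left(E\right)}\bigr)^*$ and using $\bigl(\bigl(D^{\prime\prime}_{\textrm{End}\left(E\right)}\bigr)^*\bigr)^2=0$ forces the two conditions to hold separately, giving the direct sum without any discussion of $K$. I would recommend proving the deferred identity explicitly and then collapsing your case $\left(3\right)$ to this decoupling argument, and repairing case $\left(1\right)$ with the Bochner-type vanishing above.
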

\begin{proof}\ 

  \begin{enumerate}[$\left(1\right)$, leftmargin=20pt]
    \item We decompose $D_{\textrm{End}\left(E\right)}$ into unitary connection part and Hermitian transformation of $\textrm{End}\left(E\right)$-valued $1$-form part as $D_{\textrm{End}\left(E\right)}=\nabla^{\textrm{End}\left(E\right)}+\Theta^{\textrm{End}\left(E\right)}$. Then for $\varphi_1,\varphi_2\in H_D^0\left(\mathscr{B}^*\right)$, we have
    \begin{equation}
      \nabla^{\textrm{End}\left(E\right)}\varphi_1=\nabla^{\textrm{End}\left(E\right)}\varphi_2=0\in\Omega^1\bigl(\textrm{Herm}_{\textrm{skew}}\left(E,h\right)\bigr) \label{eq:4.5}
    \end{equation}
    \begin{equation}
      \Theta^{\textrm{End}\left(E\right)}\varphi_1=\Theta^{\textrm{End}\left(E\right)}\varphi_2=0\in\Omega^1\bigl(\textrm{Herm}\left(E,h\right)\bigr) \label{eq:4.6}
    \end{equation}
    From \eqref{eq:4.5}, we have
    \begin{equation}
      \bar{\partial}^{\textrm{End}\left(E\right)}\varphi_1=\bar{\partial}^{\textrm{End}\left(E\right)} \varphi_2=0
    \end{equation}
    and from \eqref{eq:4.6}, we have
    \begin{equation}
      \theta^{\textrm{End}\left(E\right)}\wedge\varphi_1=\theta^{\textrm{End}\left(E\right)}\wedge\varphi_2=0.
    \end{equation}
    Therefore we obtain
    \begin{equation}
      D_{\textrm{End}\left(E\right)}^{\prime\prime}\varphi_1=D_{\textrm{End}\left(E\right)}^{\prime\prime}\varphi_2=0.
    \end{equation}
    Thus $\varphi_1+\sqrt{-1}\varphi_2\in\mathscr{C}^0$ gives $\varphi_1+\sqrt{-1}\varphi_2\in H_{D^{\prime\prime}}^0\left(\mathscr{C}^*\right)$.

    Conversely, for $\varphi\in H_{D^{\prime\prime}}^0\left(\mathscr{C}^*\right)$, we decompose it as $\varphi=\varphi_1+\sqrt{-1}\varphi_2$ for some $\varphi_1$ and $\varphi_2\in\mathscr{B}^0$. Then since $\varphi$ satisfies $D_{\textrm{End}\left(E\right)}^{\prime\prime}\varphi=0$, we have $D_{\textrm{End}\left(E\right)}^\prime\varphi^*=0$. Also, since $D\in\mathscr{Y}\left(E,h\right)$, we have
    \begin{equation}
      \sqrt{-1}\Lambda D_{\textrm{End}\left(E\right)}^{\prime\prime}\left(D_{\textrm{End}\left(E\right)}^\prime\varphi\right)=0,\quad\sqrt{-1}\Lambda D_{\textrm{End}\left(E\right)}^\prime\left(D_{\textrm{End}\left(E\right)}^{\prime\prime}\varphi^*\right)=0.
    \end{equation}
    From these and K\"{a}hler identities for Higgs bundles, we have
    \begin{equation}
      \left(D_{\textrm{End}\left(E\right)}^\prime\right)^*D_{\textrm{End}\left(E\right)}^\prime\varphi=0,\quad\left(D_{\textrm{End}\left(E\right)}^{\prime\prime}\right)^*D_{\textrm{End}\left(E\right)}^{\prime\prime}\varphi^*=0.
    \end{equation}
    So $\varphi$ and $\varphi^*$ satisfy $D_{\textrm{End}\left(E\right)}^\prime\varphi=0$ and $D_{\textrm{End}\left(E\right)}^{\prime\prime}\varphi^*=0$, respectively. Therefore we obtain $D_{\textrm{End}\left(E\right)}\varphi=0$ and $D_{\textrm{End}\left(E\right)}\varphi^*=0$. Thus with
    \begin{equation}
      \varphi_1=\frac{1}{2}\left(\varphi-\varphi^*\right),\quad\varphi_2=\frac{1}{2}\left(\varphi+\varphi^*\right),
    \end{equation}
    we have $\varphi_1,\varphi_2\in H_D^0\left(\mathscr{B}^*\right)$.
    \item In Section $3$, we discussed that $P_1:\Omega^1\bigl(\textrm{End}\left(E\right)\bigr)\to\Omega^1\bigl(\textrm{End}\left(E\right)\bigr)$ induces a linear isomorphism between $H_D^1\left(\mathscr{B}^*\right)$ and $H_{D^{\prime\prime}}^1\left(\mathscr{C}^*\right)$, thus the result holds.
    \item First, recall that $D_0$ is defined by, for $\alpha=\beta+\gamma\in\mathscr{C}^1$,
    \begin{equation}
      D_0\left(\alpha\right)=\biggl(P_0\left(d^{\nabla^{\textrm{End}\left(E\right)}}\beta+\Theta^{\textrm{End}\left(E\right)}\gamma\right),D_{\textrm{End}\left(E\right)}^{\prime\prime}\bigl(P_1\left(\alpha\right)\bigr)\biggr).
    \end{equation}
    Therefore, by a straightforward calculation, we obtain
    \begin{equation}
      D_0^*\left(\varphi+\alpha_2\right)=\frac{1}{n}\left(d^{D_{\textrm{End}\left(E\right)}}\right)^*\left(\varphi\omega\right)+\frac{1}{2}P_1^{-1}\left(D_{\textrm{End}\left(E\right)}^{\prime\prime}\right)^*\alpha_2.
    \end{equation}
    Note that in the above calculation,
    \begin{align}
      &\left(d^{\nabla^{\textrm{End}\left(E\right)}}\right)^*\left(\varphi\omega\right)\in\Omega^1\bigl(\textrm{Herm}_{\textrm{skew}}\left(E,h\right)\bigr),\\
      &\left(\Theta^{\textrm{End}\left(E\right)}\wedge\right)^*\left(\varphi\omega\right)\in\Omega^1\bigl(\textrm{Herm}\left(E,h\right)\bigr)
    \end{align}
    hold. Next, we have
    \begin{equation}
      \left(d^{D_{\textrm{End}\left(E\right)}}\right)^*\left(\varphi\omega\right)=\left(D_{\textrm{End}\left(E\right)}^\prime\right)^*\left(\varphi\omega\right)+\left(D_{\textrm{End}\left(E\right)}^{\prime\prime}\right)^*\left(\varphi\omega\right)
    \end{equation}
    and
    \begin{align}
      \left(D_{\textrm{End}\left(E\right)}^\prime\right)^*\left(\varphi\omega\right)=-\sqrt{-1}D_{\textrm{End}\left(E\right)}^{\prime\prime}\varphi=-\sqrt{-1}\bar{\partial}^{\textrm{End}\left(E\right)}\varphi-\sqrt{-1}\left[\theta,\varphi\right].
    \end{align}
    Similarly, we have
    \begin{equation}
      \left(D_{\textrm{End}\left(E\right)}^{\prime\prime}\right)^*\left(\varphi\omega\right)=\sqrt{-1}D_{\textrm{End}\left(E\right)}^\prime\varphi=\sqrt{-1}\partial^{\textrm{End}\left(E\right)}\varphi+\sqrt{-1}\left[\theta^*,\varphi\right].
    \end{equation}
    Define
    \begin{align}
      &A:=-\sqrt{-1}\bar{\partial}^{\textrm{End}\left(E\right)}\varphi\in\Omega^{0,1}\bigl(\textrm{End}\left(E\right)\bigr),\\
      &B:=-\sqrt{-1}\left[\theta,\varphi\right]\in\Omega^{1,0}\bigl(\textrm{End}\left(E\right)\bigr).
    \end{align}
    Then we have $\left(d^{D_{\textrm{End}\left(E\right)}}\right)^*\left(\varphi\omega\right)=\left(A-A^*\right)+\left(B+B^*\right)$. So we obtain
    \begin{equation}
      P_1\Bigl(\left(d^{D_{\textrm{End}\left(E\right)}}\right)^*\left(\varphi\omega\right)\Bigr)=A+B=-\sqrt{-1}D_{\textrm{End}\left(E\right)}^{\prime\prime}\varphi.
    \end{equation}
    Therefore we have
    \begin{equation}
      D_0^*\left(\varphi+\alpha_2\right)=0\Longleftrightarrow-\frac{\sqrt{-1}}{n}D_{\textrm{End}\left(E\right)}^{\prime\prime}\varphi+\frac{1}{2}\left(D_{\textrm{End}\left(E\right)}^{\prime\prime}\right)^*\alpha_2=0. \label{eq:4.7}
    \end{equation}
    Applying $\left(D_{\textrm{End}\left(E\right)}^{\prime\prime}\right)^*$ to both sides of \eqref{eq:4.7}, we have $\left(D_{\textrm{End}\left(E\right)}^{\prime\prime}\right)^*D_{\textrm{End}\left(E\right)}^{\prime\prime}\varphi=0$, which yields
    \begin{equation}
      D_{\textrm{End}\left(E\right)}^{\prime\prime}\varphi=0. \label{eq:4.8}
    \end{equation}
    From \eqref{eq:4.7} and \eqref{eq:4.8}, we also obtain
    \begin{equation}
      \left(D_{\textrm{End}\left(E\right)}^{\prime\prime}\right)^*\alpha_2=0. \label{eq:4.9}
    \end{equation}

    Conversely, if $\varphi$ and $\alpha_2$ satisfy \eqref{eq:4.8} and \eqref{eq:4.9}, then following the argument backwards from \eqref{eq:4.7}, we have $D_0^*\left(\varphi+\alpha_2\right)=0$. Therefore we obtain
    \begin{align}
      D_0^*\left(\varphi+\alpha_2\right)=0\Longleftrightarrow\left\{
      \begin{aligned}
        &D_{\textrm{End}\left(E\right)}^{\prime\prime}\varphi=0,\\
        &\left(D_{\textrm{End}\left(E\right)}^{\prime\prime}\right)^*\alpha_2=0.
      \end{aligned}\right.
    \end{align}
    Note that from the discussion in $\left(1\right)$, for $\varphi\in\mathscr{B}^0$,
    \begin{equation}
      D_{\textrm{End}\left(E\right)}^{\prime\prime}\varphi=0\Longleftrightarrow D_{\textrm{End}\left(E\right)}\varphi=0
    \end{equation}
    holds and this proves the statement.
    \item This is obvious.
  \end{enumerate}
\end{proof}
Let $\widetilde{\mathscr{B}}^k:=\Omega^k\bigl(\textrm{Herm}_{\textrm{skew}}^0\left(E,h\right)\bigr)$ and $\widetilde{\mathscr{C}}^k:=\Omega^k\bigl(\textrm{End}^0\left(E\right)\bigr)$ and define a subcomplex $(\widetilde{\mathscr{B}}^*)$ of $\left(\mathscr{B}^*\right)$. Then $(\widetilde{\mathscr{B}}^*)$ is also an elliptic complex and if $\widetilde{H}_D^k\left(\mathscr{B}^*\right)$ denote the $k$-th cohomology group of $(\widetilde{\mathscr{B}}^*)$, similar relationships hold for the cohomology groups of $(\widetilde{\mathscr{C}}^*)$ (cf. Appendix) and those of $(\widetilde{\mathscr{B}}^*)$, that is, we have
\begin{align}
  &\widetilde{H}_D^0\left(\mathscr{B}^*\right)\otimes\mathbb{C}\simeq\widetilde{H}_{D^{\prime\prime}}^0\left(\mathscr{C}^*\right), \label{eq:4.10}\\
  &\widetilde{H}_D^1\left(\mathscr{B}^*\right)\simeq\widetilde{H}_{D^{\prime\prime}}^1\left(\mathscr{C}^*\right), \label{eq:4.11}\\
  &\widetilde{H}_D^2\left(\mathscr{B}^*\right)\simeq\widetilde{H}_{D^{\prime\prime}}^2\left(\mathscr{C}^*\right)\oplus\widetilde{H}_D^0\left(\mathscr{B}^*\right), \label{eq:4.12}\\
  &\widetilde{H}_D^q\left(\mathscr{B}^*\right)\simeq\widetilde{H}_{D^{\prime\prime}}^q\left(\mathscr{C}^*\right),\,3\leq q.
\end{align}
\section{dimension of the moduli space of HYM connections}
Let $\left(M,g\right)$ be an $n$-dimensional compact Kähler manifold and $\left(E,h\right)$ be a smooth Hermitian vector bundle of rank $r$ over $M$. Using a standard method in \cite{Griffiths}, we obtain the following theorem.
\begin{theorem}
  \textit{The moduli space of irreducible HYM connections, which is denoted by $\widetilde{\mathscr{M}}_{\textrm{HYM}}$ is a complex analytic space. Moreover, it is non-singular at a point $\left[D\right]\in\widetilde{\mathscr{M}}_{\textrm{HYM}}$ satisfying $\widetilde{H}_D^2\left(\mathscr{B}^*\right)=0$.}
\end{theorem}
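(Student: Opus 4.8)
The plan is to carry out the standard Kuranishi-type local construction for the moduli problem governed by the elliptic complex $(\widetilde{\mathscr{B}}^*)$, in the spirit of \cite{Griffiths}. One works throughout with the Sobolev $L^2_k$-completions ($k \gg 0$) of the spaces of forms and with the corresponding Hilbert Lie group completion of $\mathcal{G}(E,h)$, passing back to the $C^\infty$ category at the end by elliptic regularity. The hypothesis that $D$ is irreducible means that its stabilizer in $\mathcal{G}(E,h)$ is the (finite) centre, equivalently $\widetilde{H}_D^0(\mathscr{B}^*) = 0$; in particular $(D_{\textrm{End}(E)})^* D_{\textrm{End}(E)}$ is invertible on $\widetilde{\mathscr{B}}^0$. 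Using this together with the Hodge decomposition of $(\widetilde{\mathscr{B}}^*)$, the implicit function theorem applied to the map $g \mapsto D \cdot g - D$ along the image of $D_{\textrm{End}(E)}$ produces a local slice: every element of $\mathscr{Y}(E,h)$ sufficiently close to $D$ is gauge-equivalent, by a transformation close to $\textrm{id}_E$ and unique modulo the centre, to a connection $D + a$ with $a \in \widetilde{\mathscr{C}}^1$ satisfying the Coulomb condition $(D_{\textrm{End}(E)})^* a = 0$.

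Next I would build the Kuranishi map. By the computations of Section~3, a connection $D + a$ in the slice lies in $\mathscr{Y}(E,h)$ precisely when $\mathcal{F}(a) := D_0(a) + Q(a) = 0$ in $\widetilde{\mathscr{B}}^0 \oplus \widetilde{\mathscr{C}}^2$, where $Q$ gathers the quadratic contributions from $\theta \wedge \theta = 0$, $\bar\partial^{\textrm{End}(E)}\theta = 0$ and the contraction of the curvature, so that $D\mathcal{F}(0) = D_0$. The system consisting of $\mathcal{F}(a) = 0$ and the slice condition has $D_0 \oplus (D_{\textrm{End}(E)})^*$ as its linearization at $a = 0$; this operator is elliptic because $(\widetilde{\mathscr{B}}^*)$ is (see the remarks after Theorem~\ref{thm:4.4}), with kernel $\widetilde{H}_D^1(\mathscr{B}^*)$. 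A Lyapunov--Schmidt reduction using the Green operator of the complex — together with the Bianchi-type identity obeyed by $\mathcal{F}$, which controls the part of the equation transverse to $\widetilde{H}_D^2(\mathscr{B}^*)$ — then yields a neighbourhood $U$ of $0$ in $\widetilde{H}_D^1(\mathscr{B}^*)$, a real-analytic map $h \mapsto a(h)$ into the slice with $a(0) = 0$, $da(0) = \textrm{id}$, and a real-analytic map
\begin{equation}
  \kappa \colon U \longrightarrow \widetilde{H}_D^2(\mathscr{B}^*), \qquad \kappa(0) = 0, \quad d\kappa(0) = 0,
\end{equation}
such that $\mathcal{F}(a(h)) = 0 \Longleftrightarrow \kappa(h) = 0$. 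By elliptic regularity each $a(h)$ is smooth, and since the stabilizer acts trivially on the slice data, $h \mapsto [D + a(h)]$ induces a homeomorphism from $\kappa^{-1}(0)$ onto a neighbourhood of $[D]$ in $\widetilde{\mathscr{M}}_{\textrm{HYM}}$.

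It remains to make the picture complex-analytic. By Section~3 and Theorem~\ref{thm:4.4}, together with \eqref{eq:4.11} and \eqref{eq:4.12}, the spaces $\widetilde{H}_D^1(\mathscr{B}^*)$ and $\widetilde{H}_D^2(\mathscr{B}^*)$ carry natural complex structures, being isomorphic through $P_1$ to the deformation cohomologies $\widetilde{H}_{D^{\prime\prime}}^1(\mathscr{C}^*)$ and $\widetilde{H}_{D^{\prime\prime}}^2(\mathscr{C}^*)$ of the underlying Higgs structure. Carrying out the reduction compatibly with these structures — i.e. identifying it with the genuinely holomorphic Kuranishi theory of the Higgs deformation complex $(\widetilde{\mathscr{C}}^*)$ from the Appendix — shows that $\kappa$ may be taken holomorphic. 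Then $\kappa^{-1}(0)$ is a complex analytic subset of the complex vector space $\widetilde{H}_D^1(\mathscr{B}^*)$, so $\widetilde{\mathscr{M}}_{\textrm{HYM}}$ (Hausdorff, being a subspace of the Hausdorff $\mathscr{M}_{\textrm{HYM}}$) is a complex analytic space. When $\widetilde{H}_D^2(\mathscr{B}^*) = 0$ the map $\kappa$ takes values in the zero space, so $\kappa^{-1}(0) = U$ and a neighbourhood of $[D]$ is biholomorphic to the complex manifold $U \subseteq \widetilde{H}_D^1(\mathscr{B}^*)$; thus $[D]$ is a non-singular point, with local complex dimension $\dim_{\mathbb{C}} \widetilde{H}_D^1(\mathscr{B}^*)$.

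The main obstacle I anticipate is precisely the holomorphicity in the last step: the complex structure on the harmonic spaces is available only indirectly, via $P_1$ and the identifications of Theorem~\ref{thm:4.4}, so it has to be transported through the whole slice-and-reduction construction and matched with the intrinsically complex-analytic deformation theory of Higgs structures in the Appendix. By contrast, the construction of the local slice and the verification that irreducibility ($\widetilde{H}_D^0(\mathscr{B}^*) = 0$) makes the reduced gauge action free are routine, and the finite-dimensional reduction is the usual Lyapunov--Schmidt argument together with the integrability (Bianchi) identities already implicit in Section~3.
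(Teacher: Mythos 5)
Your proposal is correct and is essentially the ``standard method'' that the paper invokes by citing \cite{Griffiths} in lieu of a written proof: a Coulomb-gauge slice made available by the irreducibility hypothesis $\widetilde{H}_D^0\left(\mathscr{B}^*\right)=0$, a Lyapunov--Schmidt/Kuranishi reduction with obstruction space $\widetilde{H}_D^2\left(\mathscr{B}^*\right)$, and transport of the complex structure through $P_1$ and Theorem \ref{thm:4.4} to the Dolbeault--Higgs deformation complex --- the last step being exactly what the open injection of Section $6$ substantiates. One minor slip worth correcting: the stabilizer of an irreducible $D$ in $\mathcal{G}\left(E,h\right)$ is the circle of constant unitary scalars, not a finite group, though since it acts trivially on $\mathscr{Y}\left(E,h\right)$ this does not affect your argument.
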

\begin{remark}
  For a HYM connection $D$ on $\left(E,h\right)$, by the definition of the zeroth cohomology group $H_D^0\left(\mathscr{B}^*\right)$, $D$ is irreducible if and only if $\widetilde{H}_D^0\left(\mathscr{B}^*\right)$ vanishes.
\end{remark}
In this section, we compute the dimension of the smooth locus of the moduli space of irreducible HYM connections $\widetilde{\mathscr{M}}_{\textrm{HYM}}$. Let $\left[D\right]\in\widetilde{\mathscr{M}}_{\textrm{HYM}}$ be a smooth point of the moduli space. From the above theorem and remark, $\widetilde{H}_D^0\left(\mathscr{B}^*\right)$ and $\widetilde{H}_D^2\left(\mathscr{B}^*\right)$ vanish. Therefore from \eqref{eq:4.10},\eqref{eq:4.12} and \eqref{eq:ap} in Appendix, we have
\begin{equation}
  H_{D^{\prime\prime}}^0\left(\mathscr{C}^*\right)=\mathbb{C}\,\textrm{id}_E,\quad H_{D^{\prime\prime}}^2\left(\mathscr{C}^*\right)\simeq H^2\left(M,\mathbb{C}\right). \label{eq:4.1}
\end{equation}
Moreover, from \eqref{eq:4.11}, the dimension of $\widetilde{\mathscr{M}}_{\textrm{HYM}}$ equals that of $H_{D^{\prime\prime}}^1\left(\mathscr{C}^*\right)$. So we compute the dimension of $H_{D^{\prime\prime}}^1\left(\mathscr{C}^*\right)$.

Before computing, we derive a useul identity for the dimensions of the cohomology groups $H_{D^{\prime\prime}}^k\left(\mathscr{C}^*\right)$.
\begin{lemma}\label{Lem:4.3}
  \textit{Let $\left(E,D^{\prime\prime},h\right)$ be a Hermitian Higgs bundle over an $n$-dimensional compact K\"{a}hler manifold $\left(M,g\right)$. $\left(E^*,D_{E^*}^{\prime\prime},h_{E^*}\right)$ denotes the dual bundle of $\left(E,D^{\prime\prime},h\right)$. For $k\in\mathbb{N}$, we define $H^k\left(E\right)$ as}
  \[H_{D^{\prime\prime}}^k\left(E\right):=\left\{\alpha\in\Omega^k\left(E\right)\middle|D^{\prime\prime}\alpha=0,\left(D^{\prime\prime}\right)^*\alpha=0\right\}.\]
  \textit{Then there is an anti-linear isomorphism between $H_{D^{\prime\prime}}^k\left(E\right)$ and $H^{2n-k}_{D^{\prime\prime}}\left(E^*\right)$.}
\end{lemma}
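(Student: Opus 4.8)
The plan is to construct the duality isomorphism explicitly via the Hodge star and the bundle-conjugation pairing, exactly as in the classical Kodaira--Serre duality but keeping track of the Higgs field. First I would recall the setup: the Hermitian metric $h$ on $E$ together with the Kähler metric $g$ on $M$ induces a conjugate-linear bundle isomorphism $E \to E^*$, and combined with the Hodge star $*:\Lambda^k M \to \Lambda^{2n-k}M$ we obtain a conjugate-linear map
\[
\tau : \Omega^k(E) \longrightarrow \Omega^{2n-k}(E^*), \qquad \alpha \longmapsto *\,\overline{\alpha},
\]
where the conjugation is taken using $h$ to land in $E^*$. The key point is that $\tau$ intertwines the Higgs-Dolbeault operator on $E$ with the one on $E^*$ up to sign: if $D^{\prime\prime} = \bar\partial^E + \theta$, then the induced Higgs structure on $E^*$ is $D_{E^*}^{\prime\prime} = \bar\partial^{E^*} + \theta^{\mathrm{t}}$ with $\theta^{\mathrm{t}}$ (essentially $-\theta^{\mathrm{t}}$) the transpose, and one checks
\[
\tau \circ (D^{\prime\prime})^* = \pm\, D_{E^*}^{\prime\prime} \circ \tau, \qquad \tau \circ D^{\prime\prime} = \pm\, (D_{E^*}^{\prime\prime})^* \circ \tau,
\]
the signs depending only on $k$ and $n$. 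These identities split into the holomorphic part (the usual statement $* \circ (\bar\partial^E)^* = -\bar\partial^{E^*} \circ *$ up to conjugation, which is the content of the Kähler identities / Serre duality) and the Higgs part. For the Higgs part one uses that $\theta^* $ (the $h$-adjoint, a $(0,1)$-form valued endomorphism) is exactly what the operator $(\theta \wedge)^*$ is built from, and under the conjugate-linear identification $E \cong E^*$ the wedge-with-$\theta$ operator on $E$ corresponds to the adjoint operator on $E^*$; this is a pointwise algebraic fact about the pairing $\mathrm{End}(E) \otimes \Lambda^{p,q} \to \mathrm{End}(E^*) \otimes \Lambda^{n-q,n-p}$, compatible with the fact that $\theta \wedge \theta = 0$ and $\bar\partial^{\mathrm{End}(E)}\theta = 0$ pass to the analogous conditions on $E^*$.

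Next I would assemble these into the statement. By definition $H_{D^{\prime\prime}}^k(E)$ is the space of $\alpha \in \Omega^k(E)$ with $D^{\prime\prime}\alpha = 0$ and $(D^{\prime\prime})^*\alpha = 0$; since $M$ is compact Kähler and $D$ is the HS connection of a HYM pair, the relevant Laplacian is elliptic and self-adjoint, so this ``harmonic'' space is finite-dimensional and represents the cohomology (this is already used implicitly in Sections 3--4). Applying $\tau$ and the intertwining identities above, $\alpha$ harmonic for $(E, D^{\prime\prime})$ in degree $k$ is sent to $\tau(\alpha) \in \Omega^{2n-k}(E^*)$ satisfying $D_{E^*}^{\prime\prime}\tau(\alpha) = 0$ and $(D_{E^*}^{\prime\prime})^*\tau(\alpha) = 0$, i.e. $\tau(\alpha) \in H_{D^{\prime\prime}}^{2n-k}(E^*)$. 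Since $\tau$ is a conjugate-linear bijection on forms (Hodge star is an isomorphism, bundle conjugation is an isomorphism) with conjugate-linear inverse $\overline{*^{-1}(\cdot)}$, and it restricts to a bijection between the two harmonic spaces by the same argument run in reverse (note $\tau$ applied twice returns to $E$ up to the sign $(-1)^{k}$ from $** = (-1)^{k(2n-k)}$ and a harmless constant), we get the desired anti-linear isomorphism $H_{D^{\prime\prime}}^k(E) \cong H_{D^{\prime\prime}}^{2n-k}(E^*)$.

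The main obstacle I expect is the bookkeeping in verifying that $\tau$ swaps $D^{\prime\prime}$ with $(D_{E^*}^{\prime\prime})^*$ and vice versa \emph{with the correct signs} once the Higgs field is included: the holomorphic part is standard, but one must be careful that the $h$-adjoint $\theta^*$ entering $(D^{\prime\prime})^* = (\bar\partial^E)^* + (\theta\wedge)^*$ transforms correctly under the conjugate-linear identification $E \cong E^*$, so that $(\theta\wedge)^*$ on $E$ becomes precisely $\theta_{E^*}\wedge$ on $E^*$ (not its adjoint, and with no stray sign relative to the holomorphic part). Concretely this reduces to checking, at a point and in a unitary frame, that the pointwise operators $\theta\wedge$ and $(\theta\wedge)^*$ are exchanged by the composite of Hodge star and $h$-conjugation in the same way $\bar\partial^E$ and $(\bar\partial^E)^*$ are; given the Kähler identities for Higgs bundles (which the paper has already invoked from \cite{Simpson2}), this is a finite linear-algebra computation, but it is the crux. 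Everything else---ellipticity, finite-dimensionality, the identification of harmonic spaces with cohomology, and the bijectivity of $\tau$---is either classical or already established earlier in the paper.
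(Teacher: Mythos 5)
Your proposal follows essentially the same route as the paper: your $\tau$ is exactly the paper's conjugate-linear star $\bar{\star}_E$, and the crux you identify---that $\left(\theta^E\wedge\right)^*$ corresponds to $\theta^{E^*}\wedge$ under conjugation by $\bar{\star}$---is precisely the identity $\left(\theta^E\wedge\right)^*=-\bar{\star}_{E^*}\circ\left(\theta^{E^*}\wedge\right)\circ\bar{\star}_E$ that the paper establishes by the pairing computation before intertwining the Laplacians. The argument is correct; you would just need to carry out the sign verification you flag, which is the integration-by-parts step the paper writes out.
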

\begin{proof}
  $\bar{\star}_E:\Omega^k\left(E\right)\to\Omega^{2n-k}\left(E^*\right)$ and $\bar{\star}_{E^*}:\Omega^k\left(E^*\right)\to\Omega^{2n-k}\left(E\right)$ denote the Hodge's star operator on $E$ and $E^*$, respectively. If we decompose $D^{\prime\prime}$ and $D_{E^*}^{\prime\prime}$ as $D^{\prime\prime}=\bar{\partial}^E+\theta^E$ and $D_{E^*}^{\prime\prime}=\bar{\partial}^{E^*}+\theta^{E^*}$, the adjoint operator of $\bar{\partial}^E$ is written by $\left(\bar{\partial}^E\right)^*=-\bar{\star}_{E^*}\circ\bar{\partial}^{E^*}\circ\bar{\star}_E$. (See \cite[p.14]{Simpson2} for the definition of $\theta^{E^*}$.)
  
  In what follows, we prove the following identity.
  \[\left(\theta^E\wedge\right)^*=-\bar{\star}_{E^*}\circ\left(\theta^{E^*}\wedge\right)\circ\bar{\star}_E.\]
  For $\alpha\in\Omega^k\left(E\right),\beta\in\Omega^{k+1}\left(E\right)$, we have
  \begin{align}
    \int_Mh_{E\otimes\Lambda^{k+1}M}\left(\theta\wedge\alpha,\beta\right)vol_g=&\int_M\langle\theta\wedge\alpha,\bar{\star}_E\beta\rangle\\
    =&-\left(-1\right)^k\int_M\langle\alpha,\theta^{E^*}\wedge\bar{\star}_E\beta\rangle\\
    =&\int_Mh_{E\otimes\Lambda^{k+1}M}\left(\alpha,-\left(\bar{\star}_{E^*}\circ\left(\theta^{E^*}\wedge\right)\circ\bar{\star}_E\right)\beta\right) vol_g,
  \end{align}
  where $\langle\cdot,\cdot\rangle$ is a pairing. Therefore we obtain the desired identity.

  From the above discussion, the adjoint operator of $D^{\prime\prime}$ is written by $\left(D^{\prime\prime}\right)^*=-\bar{\star}_{E^*}\circ D_{E^*}^{\prime\prime}\circ\bar{\star}_E$. Let $\Delta^{D^{\prime\prime}}$ be a Laplacian induced by $D^{\prime\prime}$. It is an elliptic operator and $H_{D^{\prime\prime}}^k\left(E\right)$ coincides with the kernel of $\Delta^{D^{\prime\prime}}$. Thus we obatin $\bar{\star}_E\circ \Delta^{D^{\prime\prime}}=\Delta^{D_{E^*}^{\prime\prime}}\circ\bar{\star}_{E^*}$ and $\bar{\star}_E$ defines an anti-linear isomorphism between $H_{D^{\prime\prime}}^k\left(E\right)$ and $H_{D_{E^*}^{\prime\prime}}^{2n-k}\left(E^*\right)$. This completes the proof.
\end{proof}
Since the dual bundle of $\textrm{End}\left(E\right)$ is isomorphic to itself, by the above lemma, we have $H_{D^{\prime\prime}}^k\left(\mathscr{C}^*\right)=H_{D^{\prime\prime}}^{2n-k}\left(\mathscr{C}^*\right)$.
\begin{theorem}
  
  \textit{If $\left(M,g\right)$ is a compact K\"{a}hler surface, the real dimension of moduli space $\widetilde{\mathscr{M}}_{\textrm{HYM}}$ is given by $2+b_2-r^2\chi\left(M\right)$, where $b_2$ is the second betti bumber and $\chi\left(M\right)$ is the Euler number of $M$.}
\end{theorem}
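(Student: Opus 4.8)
The plan is to compute $\dim_{\mathbb{R}}\widetilde{\mathscr{M}}_{\textrm{HYM}} = \dim_{\mathbb{R}} H^1_{D''}(\mathscr{C}^*)$ via the Euler characteristic of the elliptic complex $(\widetilde{\mathscr{C}}^*)$ together with the already-established vanishing $\widetilde{H}^0_D(\mathscr{B}^*) = \widetilde{H}^2_D(\mathscr{B}^*) = 0$, the identifications \eqref{eq:4.1}, and the Serre-type duality $H^k_{D''}(\mathscr{C}^*) \simeq H^{2n-k}_{D''}(\mathscr{C}^*)$ from Lemma~\ref{Lem:4.3}. First I would pass from the full endomorphism complex to the trace-free one: since $\textrm{End}(E) = \textrm{End}^0(E) \oplus \mathbb{C}\,\textrm{id}_E$ as Higgs bundles (the identity section being $D''$-parallel), the cohomology of $(\mathscr{C}^*)$ splits as the cohomology of $(\widetilde{\mathscr{C}}^*)$ plus that of the scalar Dolbeault--Higgs complex, which for $\theta$ acting trivially on scalars is just the Dolbeault complex of $M$; hence $\dim_{\mathbb{C}} H^k_{D''}(\mathscr{C}^*) = \dim_{\mathbb{C}} \widetilde{H}^k_{D''}(\mathscr{C}^*) + b_k'$ where $b_k'$ is the $k$-th complex Betti number of $M$ (i.e. $\sum_{p+q=k}h^{p,q}$, which equals $b_k$ on a compact Kähler manifold). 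Using \eqref{eq:4.11} the sought real dimension is $2\dim_{\mathbb{C}}\widetilde{H}^1_{D''}(\mathscr{C}^*)$.

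Next I would write down the Euler characteristic of $(\widetilde{\mathscr{C}}^*)$ in the surface case $n=2$, so the complex runs $0 \to \widetilde{\mathscr{C}}^0 \to \widetilde{\mathscr{C}}^1 \to \widetilde{\mathscr{C}}^2 \to \widetilde{\mathscr{C}}^3 \to \widetilde{\mathscr{C}}^4 \to 0$. The alternating sum of the cohomology dimensions,
\[
\chi(\widetilde{\mathscr{C}}^*) = \sum_{k=0}^{4}(-1)^k \dim_{\mathbb{C}} \widetilde{H}^k_{D''}(\mathscr{C}^*),
\]
is a topological quantity computable from the symbol, independent of the particular Higgs structure $D''$; I would evaluate it at $\theta = 0$, where $(\widetilde{\mathscr{C}}^*)$ degenerates into a sum of Dolbeault complexes of $\textrm{End}^0(E)$-valued forms and $\chi$ becomes an index one reads off from Riemann--Roch / the known Hermitian--Einstein deformation complex. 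Concretely this gives $\chi(\widetilde{\mathscr{C}}^*) = -(r^2-1)\,[\,\textrm{something topological}\,]$; matching against the Hermitian--Einstein case and expressing the answer through $\chi(M)$ (the degree of $M$ appears via $\int_M c_2(\textrm{End}^0 E)$-type terms and Noether's formula) yields $\chi(\widetilde{\mathscr{C}}^*) = (r^2-1)\chi(M)/? $ — the precise constant is what the computation pins down. Then by Serre duality $\widetilde{H}^4 \simeq \widetilde{H}^0 = 0$ and $\widetilde{H}^3 \simeq \widetilde{H}^1$, while $\widetilde{H}^2 = 0$ by smoothness, so $\chi(\widetilde{\mathscr{C}}^*) = 0 - \dim_{\mathbb{C}}\widetilde{H}^1 + 0 - \dim_{\mathbb{C}}\widetilde{H}^3 + 0 = -2\dim_{\mathbb{C}}\widetilde{H}^1_{D''}(\mathscr{C}^*)$.

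Combining, $\dim_{\mathbb{R}}\widetilde{\mathscr{M}}_{\textrm{HYM}} = 2\dim_{\mathbb{C}}\widetilde{H}^1_{D''}(\mathscr{C}^*) = -\chi(\widetilde{\mathscr{C}}^*)$, and I would finish by evaluating $-\chi(\widetilde{\mathscr{C}}^*) = 2 + b_2 - r^2\chi(M)$. The cleanest route to the last equality is to instead use the \emph{full} complex $(\mathscr{C}^*)$: there $\chi(\mathscr{C}^*) = \chi(\widetilde{\mathscr{C}}^*) + \chi(\textrm{Dolbeault of }M)$, and $\chi$ of the Dolbeault complex of a compact Kähler surface is $\sum_q(-1)^q b_q = 2 - 2b_1 + b_2$ (using $b_0=b_4=1$, $b_1=b_3$); meanwhile from \eqref{eq:4.1} and duality $\chi(\mathscr{C}^*) = \dim_{\mathbb{C}} H^0 - \dim_{\mathbb{C}} H^1 + \dim_{\mathbb{C}} H^2 - \dim_{\mathbb{C}} H^3 + \dim_{\mathbb{C}} H^4 = 1 - \dim_{\mathbb{C}}H^1 + b_2 - \dim_{\mathbb{C}}H^1 + 1 = 2 + b_2 - 2\dim_{\mathbb{C}}\widetilde{H}^1$ (the $H^1$ of the scalar complex contributes $b_1$ to each of $H^1,H^3$ but these cancel in $\chi$ only after also... ). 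Carefully bookkeeping the scalar contributions, one gets $2\dim_{\mathbb{C}}\widetilde{H}^1 = r^2\,\chi(\mathscr{C}^*_{\textrm{index}}) $ reorganized to $2 + b_2 - r^2\chi(M)$ once the index term $\chi(\mathscr{C}^*)$ is identified with $-r^2\chi(M) + (\text{Dolbeault corrections})$ via Riemann--Roch for $\textrm{End}(E)$ on a surface (where the rank-$r^2$ bundle $\textrm{End}(E)$ has $c_1 = 0$ so only $c_2$ and the $M$-topology survive, producing exactly $-r^2\chi(M)$ plus $2 + b_2$ from the structure-sheaf/Noether term).

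The main obstacle I expect is the precise index computation — correctly assembling the symbol Euler characteristic of the non-standard complex $(\mathscr{C}^*)$ (with its $\mathscr{B}^0\oplus\mathscr{C}^2$ middle term and the maps $D_0, D_2$) and reducing it, via homotopy-invariance of the index and the $\theta=0$ degeneration, to Riemann--Roch data $\int_M \textrm{ch}(\textrm{End}\,E)\,\textrm{td}(M)$ plus purely topological terms, then recognizing the combination as $2 + b_2 - r^2\chi(M)$. Everything else — the splitting off of the scalar/identity part, the use of Serre duality from Lemma~\ref{Lem:4.3}, and the vanishing of $\widetilde{H}^0, \widetilde{H}^2$ at a smooth point — is bookkeeping once the index value is in hand.
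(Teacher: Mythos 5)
Your skeleton is the same as the paper's: apply the Atiyah--Singer index theorem to the Dolbeault--Higgs complex $\left(\mathscr{C}^*\right)$, feed in $\dim H^0=\dim H^4=1$ and $\dim H^2=b_2$ from \eqref{eq:4.1} together with the duality $\dim H^1=\dim H^3$ of Lemma~\ref{Lem:4.3}, and solve the alternating sum for $\dim H^1$. But the one step you yourself flag as ``the main obstacle'' --- the value of the index --- is precisely the step you get wrong, and not by a missing constant but by a structural misattribution. The topological index of $\left(\mathscr{C}^*\right)$ is $r^2\chi\left(M\right)$, with no correction terms: the symbol of $D^{\prime\prime}_{\textrm{End}\left(E\right)}$ is $\xi^{\prime\prime}\wedge\left(\cdot\right)$, so the symbol complex splits as the direct sum over $p$ of the Dolbeault complex twisted by $\Lambda^{p,0}M\otimes\textrm{End}\left(E\right)$ placed in degree $p$, whence $\textrm{ind}\left(\mathscr{C}^*\right)=\sum_{p}\left(-1\right)^p\,\textrm{ind}\bigl(\bar{\partial}\text{ on }\Lambda^{p,0}M\otimes\textrm{End}\left(E\right)\bigr)=\int_M\textrm{ch}\bigl(\textrm{End}\left(E\right)\bigr)\,c_n\left(TM\right)=r^2\chi\left(M\right)$, because $c_n\left(TM\right)$ is already of top degree and kills every Chern-character term of $\textrm{End}\left(E\right)$ except the rank. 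So your expectations that ``$c_2$ and the $M$-topology survive'' and that the index equals ``$-r^2\chi\left(M\right)$ plus $2+b_2$ from the structure-sheaf/Noether term'' are wrong on both counts: no characteristic class of $E$ enters, the sign is $+r^2\chi\left(M\right)$, and the summand $2+b_2$ belongs entirely to the cohomological side of the index identity (it is $\dim H^0+\dim H^2+\dim H^4$), not to the topological side. Had you substituted your guessed value you would have obtained $\dim H^1=r^2\chi\left(M\right)/2$ rather than the stated formula. With the correct index the paper's argument is one line: $1-\dim H^1+b_2-\dim H^3+1=r^2\chi\left(M\right)$ and $\dim H^1=\dim H^3$ give $2\dim H^1=2+b_2-r^2\chi\left(M\right)$.

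There is a second unresolved point: you oscillate between the trace-free complex $(\widetilde{\mathscr{C}}^*)$ and the full complex $\left(\mathscr{C}^*\right)$, and these give different answers unless $b_1=0$. Your first route, $\dim_{\mathbb{R}}\widetilde{\mathscr{M}}_{\textrm{HYM}}=-\chi(\widetilde{\mathscr{C}}^*)$, evaluates to $-\left(r^2-1\right)\chi\left(M\right)=2-2b_1+b_2-r^2\chi\left(M\right)$, which differs from the claimed formula by $2b_1$; the discrepancy is exactly the scalar contribution $H^1\left(M,\mathbb{C}\right)$ in \eqref{eq:ap} that you notice but leave hanging (``these cancel in $\chi$ only after also\dots''). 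The paper computes with the full complex because Theorem~\ref{thm:4.4}$\left(2\right)$ identifies the tangent space $H_D^1\left(\mathscr{B}^*\right)$ with the full $H_{D^{\prime\prime}}^1\left(\mathscr{C}^*\right)$, so the abelian deformations are counted; you need to commit to that choice before the bookkeeping closes.
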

\begin{proof}
  For a point $\left[D^{\prime\prime}\right]\in\widetilde{\mathscr{M}}_{\textrm{HYM}}$, the topological index of the complex $\left(\mathscr{C}^*\right)$ is given by
  \[\textrm{ind}_T\left(\mathscr{C^*}\right)=\int_M\frac{\textrm{ch}\left(\sum_{j=1}^{2n}\Lambda^jM\otimes\textrm{End}\left(E\right)\right)\textrm{Td}\left(TM\otimes\mathbb{C}\right)}{e\left(TM\right)}=r^2\chi\left(M\right).\]
  Therefore by Atiyah-Singer index theorem, we have
  \[\dim H_{D^{\prime\prime}}^0-\dim H_{D^{\prime\prime}}^1+\dim H_{D^{\prime\prime}}^2-\dim H_{D^{\prime\prime}}^3-\dim H_{D^{\prime\prime}}^4=r^2\chi\left(M\right),\]
  where $H_{D^{\prime\prime}}^k=H_{D^{\prime\prime}}^k\left(\mathscr{C}^*\right)$. Since $D^{\prime\prime}$ satisfies \eqref{eq:4.1} and by the Lemma \ref{Lem:4.3}, we have
  \[\dim H_{D^{\prime\prime}}^0=\dim H_{D^{\prime\prime}}^4=1,\quad \dim H_{D^{\prime\prime}}^1=\dim H_{D^{\prime\prime}}^3,\quad \dim H_{D^{\prime\prime}}^2=b_2.\]
  Thereofore we obtain $2\dim H_{D^{\prime\prime}}^1=2+b_2-r^2\chi\left(M\right)$, this completes the proof.
\end{proof}
\begin{remark}
  We can also compute the dimension of moduli space when $M$ is not a surface. But if the dimension of $M$ is $3$ or higher, the dimension of moduli space may vary between its connected components. If $M$ is a Riemann surface, by a same computation, we have $\dim H_{D^{\prime\prime}}^0-\dim H_{D^{\prime\prime}}^1+\dim H_{D^{\prime\prime}}^2=r^2\chi\left(M\right)$ and the real dimension is given by $4+4r^2\left(g-2\right)$, where $g$ is the genus of $M$. This result is also stated in \cite[Theorem 3.4, p.257]{Wells}.
\end{remark}
\begin{remark}
  If $M$ is a surface, the HYM equation is equivalent to the Kapustin-Witten equation \cite{Tanaka}. If the structure group of vector bundle $E$ is $SU\left(2\right)$, the dimension of the smooth locus of the moduli space of the solutions to Kapustin-Witten equation is given by $-3\chi\left(M\right)$, which agrees with the result obtained in the above theorem when the structure group is replaced by $SU\left(2\right)$ \cite{Liu}.
\end{remark}
\section{A relationship between the moduli space of simple Higgs structures and the moduli space of irreducible HYM connections}
Let $\left(M,g\right)$ be a compact K\"{a}hler manifold and $\left(E,h\right)$ be a smooth Hermitian vector bundle over $M$. For $g\in\mathcal{G}\left(E\right)$, we define an Hermitian metric $h\cdot g$ on $E$ by, for $s,t\in\Omega^0\left(E\right)$,
\begin{equation}
  \left(h\cdot g\right)\left(s,t\right):=h\bigl(g\left(s\right),g\left(t\right)\bigr). \label{eq:6.1}
\end{equation}
For $D^{\prime\prime}\in\mathscr{D}^{\prime\prime}\left(E\right)$, $D_{D^{\prime\prime},h}$ denotes the connection on $E$ induced by $D^{\prime\prime}$. The space of connections $\mathscr{A}\left(E\right)$ has a right-action of $\mathcal{G}\left(E\right)$ via \eqref{eq:2.4}. To explicitly indicate $h$, we write this action as $\cdot_h$. Based on these notations, the following can be readly shown:
\begin{proposition}\label{prop:6.2}
  \textit{For} $D^{\prime\prime}\in\mathscr{D}^{\prime\prime}\left(E\right)$ \textit{and} $g\in\mathcal{G}\left(E\right)$, \textit{we have}
  \begin{equation}
    g^{-1}\circ D_{D^{\prime\prime},h\cdot g^{-1}}\circ g=D_{D^{\prime\prime},h}\cdot_h g=D_{D^{\prime\prime}\cdot g,h}.
  \end{equation}
\end{proposition}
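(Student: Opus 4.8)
The plan is to prove the two equalities in the statement separately and then conclude by transitivity. The right-hand equality $D_{D^{\prime\prime},h}\cdot_h g=D_{D^{\prime\prime}\cdot g,h}$ expresses that the action \eqref{eq:2.4} is compatible with the construction $D^{\prime\prime}\mapsto D_{D^{\prime\prime},h}$, while the left-hand equality $g^{-1}\circ D_{D^{\prime\prime},h\cdot g^{-1}}\circ g=D_{D^{\prime\prime}\cdot g,h}$ is a naturality statement for the Hitchin--Simpson connection under isomorphisms of Hermitian Higgs bundles.

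First I would establish $D_{D^{\prime\prime},h}\cdot_h g=D_{D^{\prime\prime}\cdot g,h}$. Writing $D^{\prime\prime}=\bar{\partial}^E+\theta$, the Hitchin--Simpson connection $D_{D^{\prime\prime},h}$ decomposes with respect to $h$ as in \eqref{eq:2.1} with $D^{\prime}=\partial^E+\theta^*$, where $\partial^E$ is the $(1,0)$-part of the Chern connection of $(\bar{\partial}^E,h)$ and $\theta^*$ is the $h$-adjoint of $\theta$ (this is precisely the decomposition that the unitary part is $h$-unitary and the self-adjoint part is $\theta+\theta^*$). Hence $D_{D^{\prime\prime},h}\cdot_h g=g^*\circ D^{\prime}\circ(g^*)^{-1}+g^{-1}\circ D^{\prime\prime}\circ g$, and I would check that this connection is again the Hitchin--Simpson connection for $h$ associated with the Higgs structure $g^{-1}\circ D^{\prime\prime}\circ g=D^{\prime\prime}\cdot g$. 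Concretely, its $D^{\prime\prime}$-part for $h$ is $g^{-1}\circ D^{\prime\prime}\circ g$ because (i) $g^*\circ\theta^*\circ(g^*)^{-1}$ is the $h$-adjoint of $g^{-1}\circ\theta\circ g$ (adjoint of a composition, using $(g^{-1})^*=(g^*)^{-1}$ and that adjunction commutes with the type decomposition), and (ii) $g^*\circ\partial^E\circ(g^*)^{-1}+g^{-1}\circ\bar{\partial}^E\circ g$ is again $h$-unitary — equivalently, it is the Chern connection of $(g^{-1}\circ\bar{\partial}^E\circ g,h)$ — which is the standard fact that $h$-unitarity is preserved under the twisted action $(\nabla^{1,0},\nabla^{0,1})\mapsto(g^*\nabla^{1,0}(g^*)^{-1},\,g^{-1}\nabla^{0,1}g)$, verified directly from the identity $d\bigl(h(s,t)\bigr)=h(\nabla s,t)+h(s,\nabla t)$ split into types. (Alternatively, this equality is immediate from the equivariance, recorded in Section~2, of the correspondence $D^{\prime\prime}\leftrightarrow D_{D^{\prime\prime},h}$ between $\mathscr{H}^{\prime\prime}(E)$ and $\mathscr{H}(E,h)$.)

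Next I would prove $g^{-1}\circ D_{D^{\prime\prime},h\cdot g^{-1}}\circ g=D_{D^{\prime\prime}\cdot g,h}$ by naturality. The point is that $g$, viewed as a bundle automorphism of $E$, is an \emph{isomorphism of Hermitian Higgs bundles}
\[
g:\bigl(E,\,D^{\prime\prime}\cdot g,\,h\bigr)\longrightarrow\bigl(E,\,D^{\prime\prime},\,h\cdot g^{-1}\bigr).
\]
It is an isometry because $(h\cdot g^{-1})(gs,gt)=h(g^{-1}gs,g^{-1}gt)=h(s,t)$ by \eqref{eq:6.1}, and it intertwines the Higgs structures because $D^{\prime\prime}\circ g=g\circ(g^{-1}\circ D^{\prime\prime}\circ g)=g\circ(D^{\prime\prime}\cdot g)$. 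Since the Hitchin--Simpson connection is built canonically from the Higgs structure and the Hermitian metric — the Chern connection and the metric-adjoint of the Higgs field being natural constructions — it commutes with isomorphisms of Hermitian Higgs bundles, giving $D_{D^{\prime\prime},h\cdot g^{-1}}\circ g=g\circ D_{D^{\prime\prime}\cdot g,h}$, i.e. $g^{-1}\circ D_{D^{\prime\prime},h\cdot g^{-1}}\circ g=D_{D^{\prime\prime}\cdot g,h}$. Combining the two equalities proves the proposition.

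The step I expect to require the most care is item (ii): checking that the twisted action $(\nabla^{1,0},\nabla^{0,1})\mapsto(g^*\nabla^{1,0}(g^*)^{-1},\,g^{-1}\nabla^{0,1}g)$ preserves $h$-unitarity, together with keeping the various adjoint conventions straight — the $h$-adjoint $g^*$ of a complex gauge transformation, the adjoint $\theta^*$ of an $\mathrm{End}(E)$-valued $(1,0)$-form, and how taking adjoints interacts with composition and with the type decomposition. Everything else is a matter of unwinding the definitions of $D_{D^{\prime\prime},h}$, of $h\cdot g$, and of the action \eqref{eq:2.4}.
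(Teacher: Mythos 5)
Your proof is correct: the second equality follows from the standard transformation law of the Chern connection and of the $h$-adjoint of the Higgs field under the complex gauge action \eqref{eq:2.4}, and the first from the naturality of the Hitchin--Simpson construction under the isometric Higgs isomorphism $g:(E,D^{\prime\prime}\cdot g,h)\to(E,D^{\prime\prime},h\cdot g^{-1})$. The paper states this proposition without proof (``can be readily shown''), and your argument is exactly the routine verification it has in mind.
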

From this, the following holds.
\begin{proposition}\label{prop:6.3}
  \textit{If} $D^{\prime\prime}\in\mathscr{H}^{\prime\prime}\left(E\right)$ \textit{and} $g\in\mathcal{G}\left(E\right)$, \textit{then the following are equivalent:}
  \begin{enumerate}[$\left(1\right)$]
    \item $D_{D^{\prime\prime},h}\cdot_h g=D_{D^{\prime\prime}\cdot g,h}\in\mathscr{Y}\left(E,h\right)$.
    \item $D_{D^{\prime\prime},h\cdot g^{-1}}\in\mathscr{Y}\left(E,h\cdot g^{-1}\right)$.
  \end{enumerate}
\end{proposition}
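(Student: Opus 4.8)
The plan is to reduce the equivalence $(1)\Leftrightarrow(2)$ to Proposition~\ref{prop:6.2} together with the naturality of curvature under conjugation and the fact that the HYM factor is insensitive to the choice of Hermitian metric. First I would invoke Proposition~\ref{prop:6.2}, which gives
\begin{equation}
  D_{D^{\prime\prime}\cdot g,h}=g^{-1}\circ D_{D^{\prime\prime},h\cdot g^{-1}}\circ g,
\end{equation}
so the connections occurring in $(1)$ and $(2)$ differ only by conjugation by the automorphism $g\in\mathcal{G}\left(E\right)$. Before touching the HYM equation I would observe that membership in the relevant $\mathscr{H}\left(E,\cdot\right)$ is automatic: the $D^{\prime\prime}$-part of $D_{D^{\prime\prime},h\cdot g^{-1}}$ is $D^{\prime\prime}$, that of $D_{D^{\prime\prime}\cdot g,h}$ is $D^{\prime\prime}\cdot g$, and both lie in $\mathscr{H}^{\prime\prime}\left(E\right)$ since $\mathscr{H}^{\prime\prime}\left(E\right)$ is preserved by the $\mathcal{G}\left(E\right)$-action. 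Hence the only content of $(1)$ and of $(2)$ is the curvature equation \eqref{eq:1.5}.

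Next I would transport the curvature. Since curvature is natural under conjugation,
\begin{equation}
  R\left(D_{D^{\prime\prime}\cdot g,h}\right)=g^{-1}\circ R\left(D_{D^{\prime\prime},h\cdot g^{-1}}\right)\circ g
\end{equation}
as $\textrm{End}\left(E\right)$-valued $2$-forms, the automorphism $g$ acting by conjugation on the $\textrm{End}\left(E\right)$-factor only. The contraction $\Lambda$ depends only on the K\"{a}hler form $\omega$ of $M$ and acts on the form-factor, so it commutes with this conjugation; applying $\sqrt{-1}\Lambda$ yields
\begin{equation}
  \sqrt{-1}\Lambda R\left(D_{D^{\prime\prime}\cdot g,h}\right)=g^{-1}\circ\left(\sqrt{-1}\Lambda R\left(D_{D^{\prime\prime},h\cdot g^{-1}}\right)\right)\circ g.
\end{equation}
Because the HYM factor $c$ is determined by $M$, $g$ and $E$ alone, it is the same for the metrics $h$ and $h\cdot g^{-1}$; thus $(1)$ amounts to $\sqrt{-1}\Lambda R\left(D_{D^{\prime\prime}\cdot g,h}\right)=c\,\textrm{id}_E$ and $(2)$ amounts to $\sqrt{-1}\Lambda R\left(D_{D^{\prime\prime},h\cdot g^{-1}}\right)=c\,\textrm{id}_E$. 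Since $\textrm{id}_E$ is central, $g^{-1}\circ\left(c\,\textrm{id}_E\right)\circ g=c\,\textrm{id}_E$, so the two equations are equivalent, which gives $(1)\Leftrightarrow(2)$.

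I do not expect a genuine obstacle here: morally, conjugation by $g$ is an isometry from $\left(E,h\right)$ onto $\left(E,h\cdot g^{-1}\right)$ intertwining the two Hitchin--Simpson connections, which is exactly what Proposition~\ref{prop:6.2} packages. The only point requiring care is the bookkeeping of which Hermitian metric each connection is unitary with respect to --- note that $D_{D^{\prime\prime},h\cdot g^{-1}}$ is the Hitchin--Simpson connection of $h\cdot g^{-1}$ rather than of $h$ --- together with the observation that neither $\Lambda$ nor $c$ depends on the Hermitian metric on $E$, so the HYM equation is transported verbatim.
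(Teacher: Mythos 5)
Your proposal is correct and follows exactly the route the paper intends: the paper derives Proposition \ref{prop:6.3} directly from Proposition \ref{prop:6.2} (it gives no further detail beyond ``From this, the following holds''), and your argument simply fills in the conjugation-of-curvature and metric-independence-of-$c$ bookkeeping that makes that derivation explicit.
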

Let $\textrm{Herm}^+\left(E\right)$ be the set of Hermitian metrics on $E$ and we define the set $\mathcal{Y}\left(E\right)$ as
\begin{equation}
  \mathcal{Y}\left(E\right):=\left\{\left(D^{\prime\prime},h\right)\in\mathscr{H}^{\prime\prime}\left(E\right)\times\textrm{Herm}^+\left(E\right)\middle|
  \begin{gathered}
    \sqrt{-1}\Lambda R\left(D_{D^{\prime\prime},h}\right)=c\,\textrm{id}_E,\\
    c=\frac{2\pi\mu\left(E\right)}{\left(n-1\right)!\cdot\textrm{Vol}\left(M,g\right)}
  \end{gathered}\right\}.
\end{equation}
We define the right-action of $\mathcal{G}\left(E\right)$ on $\mathcal{Y}\left(E\right)$ by, for $g\in\mathcal{G}\left(E\right)$ and $\left(D^{\prime\prime},h\right)\in\mathcal{Y}\left(E\right)$,
\begin{equation}
  \left(D^{\prime\prime},h\right)\cdot g:=\left(D^{\prime\prime}\cdot g,h\cdot g\right).
\end{equation}
This is well-defined by Proposition \ref{prop:6.3}. Then the following holds.
\begin{proposition}(Analogue of Proposition $3.7$ in Bradlow \cite{Bradlow})

  \textit{There exists a bijection between} $\mathscr{M}_{\textrm{HYM}}=\mathscr{Y}\left(E,h\right)/\mathcal{G}\left(E,h\right)$ \textit{and} $\mathcal{Y}\left(E\right)/\mathcal{G}\left(E\right)$.
\end{proposition}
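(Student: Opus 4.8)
The plan is to build the bijection directly, using the map that sends a HYM connection to its underlying Higgs structure paired with the fixed metric $h$. For $D\in\mathscr{Y}\left(E,h\right)$, let $D^{\prime\prime}$ be the component of $D=D^\prime+D^{\prime\prime}$ that defines the Higgs structure; under the bijection $\mathscr{H}\left(E,h\right)\simeq\mathscr{H}^{\prime\prime}\left(E\right)$ this means $D=D_{D^{\prime\prime},h}$, and since $D$ solves the HYM equation the pair $\left(D^{\prime\prime},h\right)$ lies in $\mathcal{Y}\left(E\right)$. Set $\Phi\left(D\right):=\left[\left(D^{\prime\prime},h\right)\right]\in\mathcal{Y}\left(E\right)/\mathcal{G}\left(E\right)$. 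The first step is to check that $\Phi$ is constant on $\mathcal{G}\left(E,h\right)$-orbits: for unitary $g$ one has $h\cdot g=h$ and, by \eqref{eq:2.4}, the Higgs part of $D\cdot g$ is $D^{\prime\prime}\cdot g$, so $\left(D^{\prime\prime}\cdot g,h\right)=\left(D^{\prime\prime},h\right)\cdot g$ represents the same class; hence $\Phi$ descends to $\bar{\Phi}\colon\mathscr{M}_{\textrm{HYM}}\to\mathcal{Y}\left(E\right)/\mathcal{G}\left(E\right)$.

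For surjectivity, take $\left(D^{\prime\prime},h^\prime\right)\in\mathcal{Y}\left(E\right)$. Every Hermitian metric is reachable from $h$ by $\mathcal{G}\left(E\right)$: writing $h^\prime\left(s,t\right)=h\left(ps,t\right)$ with $p$ the $h$-self-adjoint positive endomorphism comparing the two metrics, one gets $h^\prime=h\cdot p^{1/2}$, so there is $g\in\mathcal{G}\left(E\right)$ with $h^\prime=h\cdot g^{-1}$. Since $\left(D^{\prime\prime},h^\prime\right)\in\mathcal{Y}\left(E\right)$ gives $D_{D^{\prime\prime},h\cdot g^{-1}}\in\mathscr{Y}\left(E,h\cdot g^{-1}\right)$, Proposition \ref{prop:6.3} produces $D:=D_{D^{\prime\prime}\cdot g,h}\in\mathscr{Y}\left(E,h\right)$; using $h^\prime\cdot g=h$ one computes $\Phi\left(D\right)=\left[\left(D^{\prime\prime}\cdot g,h\right)\right]=\left[\left(D^{\prime\prime},h^\prime\right)\cdot g\right]=\left[\left(D^{\prime\prime},h^\prime\right)\right]$. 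For injectivity, suppose $\Phi\left(D_1\right)=\Phi\left(D_2\right)$ with $D_i=D_{D_i^{\prime\prime},h}$; then there is $g\in\mathcal{G}\left(E\right)$ with $D_2^{\prime\prime}=D_1^{\prime\prime}\cdot g$ and $h=h\cdot g$, and the latter forces $g\in\mathcal{G}\left(E,h\right)$. Proposition \ref{prop:6.2}, together with $h\cdot g^{-1}=h$, then gives
\[
D_2=D_{D_1^{\prime\prime}\cdot g,h}=g^{-1}\circ D_{D_1^{\prime\prime},h\cdot g^{-1}}\circ g=g^{-1}\circ D_1\circ g=D_1\cdot g,
\]
so $\left[D_1\right]=\left[D_2\right]$ in $\mathscr{M}_{\textrm{HYM}}$; combining the three steps, $\bar{\Phi}$ is a bijection.

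I expect the point requiring most care to be the surjectivity step, where one has to keep strict track of which side $\mathcal{G}\left(E\right)$ acts and of the bookkeeping $h^\prime=h\cdot g^{-1}$ versus $h^\prime\cdot g=h$, and to invoke Propositions \ref{prop:6.2}--\ref{prop:6.3} with exactly the right metric so that the HYM property is carried between $\left(E,h^\prime\right)$ and $\left(E,h\right)$. The one genuinely non-formal ingredient is that $g\mapsto h\cdot g$ maps $\mathcal{G}\left(E\right)$ onto $\textrm{Herm}^+\left(E\right)$, which reduces to the existence of fiberwise square roots of positive self-adjoint endomorphisms; everything else is a manipulation of the two propositions already established.
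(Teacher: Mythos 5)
Your proof is correct and follows essentially the same route as the paper: both rest on the transitivity of the $\mathcal{G}\left(E\right)$-action on $\textrm{Herm}^+\left(E\right)$ via square roots of positive self-adjoint endomorphisms, together with Propositions \ref{prop:6.2} and \ref{prop:6.3} to transport the HYM condition between metrics. The only cosmetic difference is that you verify injectivity and surjectivity of a single map where the paper writes down the inverse map explicitly and checks both are well defined.
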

\begin{proof}
  Consider a mapping
  \begin{equation}
    \mathscr{Y}\left(E,h\right)/\mathcal{G}\left(E,h\right)\ni\left[D_{D^{\prime\prime},h}\right]\mapsto\left[D^{\prime\prime},h\right]\in\mathcal{Y}\left(E\right)/\mathcal{G}\left(E\right).
  \end{equation}
  For $D_{D_1^{\prime\prime},h},D_{D_2^{\prime\prime},h}\in\mathscr{Y}\left(E,h\right)$, if $\left[D_{D_1^{\prime\prime},h}\right]=\left[D_{D_2^{\prime\prime},h}\right]$, then there exists $g\in\mathcal{G}\left(E,h\right)$ such that $D_{D_2^{\prime\prime},h}=D_{D_1^{\prime\prime},h}\cdot g$ holds, which implies that $D_2^{\prime\prime}=D_1^{\prime\prime}\cdot g$. Moreover since $g\in\mathcal{G}\left(E,h\right)$ satisfies $h\cdot g=h$, we have $\left[D_1^{\prime\prime},h\right]=\left[D_2^{\prime\prime},h\right]$. Therefore this mapping is well-defined.

  Conversely, for $\left[D^{\prime\prime},k\right]\in\mathcal{Y}\left(E\right)/\mathcal{G}\left(E\right)$, there exists $g\in\mathcal{G}\left(E\right)$ such that $k=h\cdot g$ so consider a mapping
  \begin{equation}
    \mathcal{Y}\left(E\right)/\mathcal{G}\left(E\right)\ni\left[D^{\prime\prime},k\right] \mapsto \left[D_{D^{\prime\prime}\cdot g^{-1},h}\right]\in\mathscr{Y}\left(E,h\right)/\mathcal{G}\left(E,h\right).
  \end{equation}
  For $\left(D_1^{\prime\prime},k_1\right),\left(D_2^{\prime\prime},k_2\right)\in\mathcal{Y}\left(E\right)$, if $\left[D_1^{\prime\prime},k_1\right]=\left[D_2^{\prime\prime},k_2\right]$, then there exists $g\in\mathcal{G}\left(E\right)$ such that $\left(D_2^{\prime\prime},k_2\right)=\left(D_1^{\prime\prime}\cdot g,k_1\cdot g\right)$ holds. Also there exist $g_1,g_2\in\mathcal{G}\left(E\right)$ such that $k_1=h\cdot g_1,k_2=h\cdot g_2$ and $g_1\circ g\circ g_2^{-1}\in\mathcal{G}\left(E,h\right)$. Therefore applying Proposition \ref{prop:6.2} multiple times, we have $D_{D_2^{\prime\prime}\cdot g_2^{-1},h}=g_2\circ g^{-1}\circ g_1^{-1}\circ D_{D_1^{\prime\prime}\cdot g_1^{-1},h}\circ g_1\circ g\circ g_2^{-1}$. From this, we obtain $\left[D_{D_1^{\prime\prime}\cdot g_1^{-1},h}\right]=\left[D_{D_2^{\prime\prime}\cdot g_2^{-1},h}\right]$. Therefore this mapping is well-defined.
  
  These mappings are inverse to each other, which completes the proof.
\end{proof}

We define a subset $\widetilde{\mathscr{H}}^{\prime\prime}\left(E\right)$ of $\mathscr{H}^{\prime\prime}\left(E\right)$ as
\begin{equation}
  \widetilde{\mathscr{H}}^{\prime\prime}\left(E\right)=\left\{D^{\prime\prime}\in\mathscr{H}^{\prime\prime}\left(E\right)\middle|\left(E,D^{\prime\prime}\right)\text{ is simple}\right\},
\end{equation}
and similarly define $\widetilde{\mathcal{Y}}\left(E\right)$ as
\begin{equation}
  \widetilde{\mathcal{Y}}\left(E\right)=\left\{\left(D^{\prime\prime},h\right)\in\mathcal{Y}\left(E\right)\middle|D^{\prime\prime}\in\widetilde{\mathscr{H}}^{\prime\prime}\left(E\right)\right\}.
\end{equation}
Then $\mathcal{G}\left(E\right)$ preserves $\widetilde{\mathcal{Y}}\left(E\right)$ and $\widetilde{\mathscr{H}}^{\prime\prime}\left(E\right)$. Furthermore we have the following:
\begin{theorem}(Analogous to L\"{u}ble-Teleman \cite[Proposition $2.2.2$]{Telemann})\label{thm:7.4}

  \textit{For} $\left(D_1^{\prime\prime},h_1\right),\left(D_1^{\prime\prime},h_2\right)\in\widetilde{\mathcal{Y}}\left(E\right)$, \textit{there exists} $a\in\mathbb{R}_{>0}$ \textit{such that} $h_2=ah_1$.
\end{theorem}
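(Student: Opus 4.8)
The plan is to adapt the classical uniqueness argument for Hermitian--Einstein metrics (Kobayashi \cite{Kobayashi}, L\"{u}bke--Teleman \cite{Telemann}) to the Higgs setting. Write $h_2=h_1\left(f\,\cdot\,,\cdot\right)$, where $f\in\Omega^0\bigl(\textrm{End}\left(E\right)\bigr)$ is the endomorphism that is positive definite and self-adjoint with respect to $h_1$; the claim is then equivalent to $f=a\,\textrm{id}_E$ for some $a\in\mathbb{R}_{>0}$. Put $D_i:=D_{D_1^{\prime\prime},h_i}$ and decompose $D_i=D_i^\prime+D_1^{\prime\prime}$ into its $\left(1,0\right)$- and $\left(0,1\right)$-parts, so that $D_i^\prime=\partial_{h_i}^E+\theta_{h_i}^*$ depends on $h_i$ while $D_1^{\prime\prime}=\bar\partial^E+\theta$ does not. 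The standard formulae for the canonical connection, $\partial_{h_2}^E=\partial_{h_1}^E+f^{-1}\partial_{h_1}^{\textrm{End}\left(E\right)}f$ and $\theta_{h_2}^*=f^{-1}\theta_{h_1}^*f$, combine into $D_2^\prime-D_1^\prime=f^{-1}D_{1,\textrm{End}\left(E\right)}^\prime f$, where $D_{1,\textrm{End}\left(E\right)}^\prime$ is the $\left(1,0\right)$-operator on $\textrm{End}\left(E\right)$ induced by $h_1$. Since $R\left(D_i\right)$ is the anticommutator $D_i^\prime D_1^{\prime\prime}+D_1^{\prime\prime}D_i^\prime$ (both operators squaring to zero), subtraction gives
\[R\left(D_2\right)-R\left(D_1\right)=D_{\textrm{End}\left(E\right)}^{\prime\prime}\bigl(f^{-1}D_{1,\textrm{End}\left(E\right)}^\prime f\bigr),\]
and since $h_1$ and $h_2$ are HYM metrics with the \emph{same} factor $c$ (which depends only on $M$, $g$ and $E$), applying $\sqrt{-1}\Lambda$ annihilates the left-hand side:
\[\sqrt{-1}\Lambda\,D_{\textrm{End}\left(E\right)}^{\prime\prime}\bigl(f^{-1}D_{1,\textrm{End}\left(E\right)}^\prime f\bigr)=0.\]

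Next I would pair this identity with $f$ and integrate over $M$. With $A:=f^{-1}D_{1,\textrm{End}\left(E\right)}^\prime f$, the commutator contributions being trace-free yields $\operatorname{tr}\left(fA\right)=\operatorname{tr}\bigl(D_{1,\textrm{End}\left(E\right)}^\prime f\bigr)=\partial\operatorname{tr}f$ and $\operatorname{tr}\bigl(D_{\textrm{End}\left(E\right)}^{\prime\prime}\left(fA\right)\bigr)=\bar\partial\operatorname{tr}\left(fA\right)$, so the Leibniz rule for $D_{\textrm{End}\left(E\right)}^{\prime\prime}$ gives
\[\operatorname{tr}\Bigl(f\bigl(R\left(D_2\right)-R\left(D_1\right)\bigr)\Bigr)=\bar\partial\partial\operatorname{tr}f-\operatorname{tr}\bigl(\left(D_{\textrm{End}\left(E\right)}^{\prime\prime}f\right)\wedge A\bigr).\]
Applying $\sqrt{-1}\Lambda$ kills the left-hand side because $\sqrt{-1}\Lambda R\left(D_1\right)=\sqrt{-1}\Lambda R\left(D_2\right)=c\,\textrm{id}_E$, and integrating over the compact K\"{a}hler manifold $M$ removes the $\bar\partial\partial$-term via Stokes' theorem, leaving
\[\int_M\sqrt{-1}\Lambda\,\operatorname{tr}\Bigl(\bigl(D_{\textrm{End}\left(E\right)}^{\prime\prime}f\bigr)\wedge f^{-1}D_{1,\textrm{End}\left(E\right)}^\prime f\Bigr)\frac{\omega^n}{n!}=0.\]

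The step I expect to be the main obstacle is establishing that the integrand here is pointwise of one sign; this is the Higgs analogue of the classical Kobayashi--L\"{u}bke--Teleman (Siu) pointwise computation, and it is precisely where the Higgs field must be dealt with. Decomposing $D_{\textrm{End}\left(E\right)}^{\prime\prime}f=\bar\partial^{\textrm{End}\left(E\right)}f+\left[\theta,f\right]$ and $f^{-1}D_{1,\textrm{End}\left(E\right)}^\prime f=f^{-1}\partial_{h_1}^{\textrm{End}\left(E\right)}f+f^{-1}\left[\theta_{h_1}^*,f\right]$ and discarding the $\left(2,0\right)$- and $\left(0,2\right)$-parts annihilated by $\Lambda$, the integrand reduces to $\sqrt{-1}\Lambda\operatorname{tr}\bigl(\bar\partial^{\textrm{End}\left(E\right)}f\wedge f^{-1}\partial_{h_1}^{\textrm{End}\left(E\right)}f\bigr)+\sqrt{-1}\Lambda\operatorname{tr}\bigl(\left[\theta,f\right]\wedge f^{-1}\left[\theta_{h_1}^*,f\right]\bigr)$. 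Using that $f$ is $h_1$-self-adjoint and that the canonical connection is metric one has $\partial_{h_1}^{\textrm{End}\left(E\right)}f=\bigl(\bar\partial^{\textrm{End}\left(E\right)}f\bigr)^*$ (adjoint with respect to $h_1$), while a short computation gives $\left[\theta_{h_1}^*,f\right]=-\bigl(\left[\theta,f\right]\bigr)^*$ --- this minus sign is exactly what makes the Higgs term land on the correct side. Evaluating in a local $h_1$-unitary frame in K\"{a}hler normal coordinates then shows that both summands are $\le 0$, the integrand being, up to a positive dimensional constant, $-\bigl\|f^{-1/2}\bar\partial^{\textrm{End}\left(E\right)}f\bigr\|_{h_1}^2-\bigl\|f^{-1/2}\left[\theta,f\right]\bigr\|_{h_1}^2$ pointwise. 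Since its integral vanishes, the integrand vanishes identically, forcing $\bar\partial^{\textrm{End}\left(E\right)}f=0$ and $\left[\theta,f\right]=0$, i.e.\ $D_{\textrm{End}\left(E\right)}^{\prime\prime}f=0$; thus $f$ is a global endomorphism of the Higgs bundle $\left(E,D_1^{\prime\prime}\right)$. Since $\left(E,D_1^{\prime\prime}\right)$ is simple, $f=a\,\textrm{id}_E$ for some $a\in\mathbb{C}$, and positivity of $f$ forces $a\in\mathbb{R}_{>0}$; hence $h_2=a\,h_1$, as required.
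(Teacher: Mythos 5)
Your argument is correct, but it follows a genuinely different route from the paper's. You run the classical Kobayashi--L\"{u}bke--Teleman uniqueness argument directly on the endomorphism $f$ with $h_2=h_1\left(f\,\cdot\,,\cdot\right)$: compute $D_2^\prime-D_1^\prime=f^{-1}D_{1,\textrm{End}\left(E\right)}^\prime f$, subtract the two HYM equations to get $\sqrt{-1}\Lambda D_{\textrm{End}\left(E\right)}^{\prime\prime}\bigl(f^{-1}D_{1,\textrm{End}\left(E\right)}^\prime f\bigr)=0$, pair with $f$, integrate by parts, and read off a pointwise sign from the resulting density --- with the crucial observation that $\left[\theta_{h_1}^*,f\right]=-\left(\left[\theta,f\right]\right)^*$ makes the Higgs contribution land with the same sign as the $\bar\partial f$ term. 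The paper instead takes the positive square root $g$ of $f$, sets $D_2^{\prime\prime}:=D_1^{\prime\prime}\cdot g^{-1}$ so that $g$ becomes a $D^{\prime\prime}$-parallel section of $\textrm{Hom}\bigl(\left(E,D_1^{\prime\prime}\right),\left(E,D_2^{\prime\prime}\right)\bigr)$, checks that the HS connection on this Hom-bundle is HYM with factor $0$, and then invokes a Bochner-type vanishing theorem (holomorphic sections of a Higgs bundle with semi-negative mean curvature are $D^\prime$-parallel) to conclude $D_{\textrm{End}\left(E,D_1^{\prime\prime}\right)}^\prime f=0$, hence $D_{\textrm{End}\left(E,D_1^{\prime\prime}\right)}^{\prime\prime}f=0$; simplicity finishes both proofs identically. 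The two arguments are of course cousins --- the paper's vanishing theorem is itself proved by K\"{a}hler identities and Bochner's formula, i.e.\ by the same integration by parts you perform explicitly --- but yours is self-contained and computational, while the paper's is more structural and reuses a general lemma; the price you pay is having to carry out the pointwise sign analysis of $\sqrt{-1}\Lambda\operatorname{tr}\bigl(D_{\textrm{End}\left(E\right)}^{\prime\prime}f\wedge f^{-1}D_{1,\textrm{End}\left(E\right)}^\prime f\bigr)$ by hand, which you have correctly identified (and correctly resolved) as the one delicate step.
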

\begin{proof}
  The right action of $\mathcal{G}\left(E\right)$ on $\text{Herm}^+\left(E\right)$, defined by \eqref{eq:6.1}, is transitive. In particular, there exists a unique positive definite Hermitian transformation $f\in\Omega^0\bigl(\textrm{Herm}\left(E,h_1\right)\bigr)$ satisfying $h_2\left(\cdot,\cdot\right)=h_1\bigl(f\left(\cdot\right),\cdot\bigr)$. $g\in\Omega^0\bigl(\textrm{Herm}\left(E,h_1\right)\bigr)$ denotes the square root of $f$. Then we have $h_2=h_1\cdot g$. Also, if we define $D_2^{\prime\prime}:=D_1^{\prime\prime}\cdot g^{-1}$, then we have
  \begin{equation}
    D_{\textrm{Hom}\bigl(\left(E,D_1^{\prime\prime}\right),\left(E,D_2^{\prime\prime}\right)\bigr)}^{\prime\prime}g=D_2^{\prime\prime}\circ g-g\circ D_1^{\prime\prime}=0.
  \end{equation}
  Let $D_{D_1^{\prime\prime},h_1}=D_1^\prime+D_1^{\prime\prime}$, then from Proposition \ref{prop:6.2}, we have
  \begin{equation}
    D_{D_1^{\prime\prime},h_2}=D_{D_1^{\prime\prime},h_1\cdot g}=g^{-2}\circ D_1^\prime\circ g^2+D_1^{\prime\prime}.
  \end{equation}
  Therefore we obtain $R\left(D_{D_2^{\prime\prime},h_1}\right)=g\circ R\left(D_{D_1^{\prime\prime},h_2}\right)\circ g^{-1}$. Thus $D_{D_2^{\prime\prime},h_1}$ is an HYM connection with HYM factor $c$ and this implies that $D_{\text{Hom}\bigl(\left(E,D_1^{\prime\prime}\right),\left(E,D_2^{\prime\prime}\right)\bigr)}$ is an HYM connection with HYM factor $0$. In other words, since the mean curvature form of $D_{\text{Hom}\bigl(\left(E,D_1^{\prime\prime}\right),\left(E,D_2^{\prime\prime}\right)\bigr)}$ is semi-negative definite, we have $D_{\text{Hom}\bigl(\left(E,D_1^{\prime\prime}\right),\left(E,D_2^{\prime\prime}\right)\bigr)}^\prime g=0$. Let $D_{D_2^{\prime\prime},h_1}=D_2^\prime+D_2^{\prime\prime}$, then we have
  \begin{equation}
    0=D_{\text{Hom}\bigl(\left(E,D_1^{\prime\prime}\right),\left(E,D_2^{\prime\prime}\right)\bigr)}^\prime g=g^{-1}\circ D_{\textrm{End}\left(E,D_1^{\prime\prime}\right)}^\prime f.
  \end{equation}
  Therefore we have $D_{\textrm{End}\left(E,D_1^{\prime\prime}\right)}^\prime f=0$. Since $f\in\Omega^0\bigl(\textrm{Herm}\left(E,h_1\right)\bigr)$, we obtain 
  \begin{equation}
    D_{\text{End}\left(E,D_1^{\prime\prime}\right)}^{\prime\prime}f=0
  \end{equation}
  and since $\left(E,D_1^{\prime\prime}\right)$ is simple, there exists $a\in\mathbb{R}_{>0}$ satisfying $f=a\,\text{id}_E$. This completes the proof.
\end{proof}
In the above proof, we used the following theorem.
\begin{theorem}
  \textit{Let} $\left(M,g\right)$ \textit{be a compact K\"{a}hler manifold,} $\left(E,D^{\prime\prime},h\right)$ \textit{be a Hermitian Higgs bundle over} $M$ and $D$ be \textit{be the HS connection on} $\left(E,D^{\prime\prime},h\right)$. \textit{Let} $\widehat{K}\left(D\right)$ \textit{denote the mean curvature form of} $D$. \textit{If} $\widehat{K}\left(D\right)$ \textit{is semi-negative definite, that is, for any} $s\in\Omega^0\left(E\right),\widehat{K}\left(D\right)\left(s,s\right) \leq 0$, \textit{then any} $s \in \Omega^0\left(E\right)$ \textit{satisfying} $D^{\prime\prime}s=0$ \textit{satisfies} $D^\prime s=0$.
  \end{theorem}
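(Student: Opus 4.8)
The plan is to transplant the classical Bochner vanishing argument for holomorphic sections of a bundle with semi-negative mean curvature to the holomorphic Hermitian bundle $\left(E,\bar{\partial}^E,h\right)$ underlying the Higgs bundle (here $\bar{\partial}^E$ is the $\left(0,1\right)$-part of $D^{\prime\prime}$ and $\nabla^E$ its Chern connection), with an extra non-negative term coming from the Higgs field, and to integrate away a $\partial\bar{\partial}$-term using the compactness of $M$. First I would unpack the hypothesis: since $D^{\prime\prime}=\bar{\partial}^E+\theta$, the equation $D^{\prime\prime}s=0$ splits by bidegree into $\bar{\partial}^E s=0$ (in $\Omega^{0,1}\left(E\right)$) and $\theta s=0$ (in $\Omega^{1,0}\left(E\right)$); thus $s$ is $\bar{\partial}^E$-holomorphic and is killed by the Higgs field, and the target $D^\prime s=\partial^E s+\theta^* s=0$ will likewise be established one bidegree at a time. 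I would then work with the function $f:=|s|^2=h\left(s,s\right)$.

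Next I would derive a Weitzenböck-type identity for $f$. Using that $\nabla^E$ is $h$-compatible and $\bar{\partial}^E s=0$ one finds $\partial f=h\left(\partial^E s,s\right)$; applying the Leibniz rule once more, and using that $R\left(\nabla^E\right)$ is of type $\left(1,1\right)$ (so $\partial^E\partial^E s=0$, while $\bar{\partial}^E\partial^E s=R\left(\nabla^E\right)s$ because $\bar{\partial}^E s=0$), one obtains
\begin{equation}
  \sqrt{-1}\Lambda\bar{\partial}\partial f=h\bigl(\widehat{K}\left(\nabla^E\right)s,s\bigr)-|\partial^E s|^2,\qquad\widehat{K}\left(\nabla^E\right):=\sqrt{-1}\Lambda R\left(\nabla^E\right).
\end{equation}
Since $M$ is compact K\"{a}hler, $\int_M\sqrt{-1}\Lambda\bar{\partial}\partial f\,\frac{\omega^n}{n!}=\int_M\sqrt{-1}\bar{\partial}\partial f\wedge\frac{\omega^{n-1}}{\left(n-1\right)!}=0$ by Stokes' theorem and $d\omega=0$, so integrating the identity over $M$ leaves $\int_M\bigl(h\left(\widehat{K}\left(\nabla^E\right)s,s\right)-|\partial^E s|^2\bigr)\frac{\omega^n}{n!}=0$.

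To introduce the hypothesis on $\widehat{K}\left(D\right)$ I would invoke the curvature formula $R\left(D\right)=R\left(\nabla^E\right)+\left[\theta\wedge\theta^*\right]+\left(\partial^{\textrm{End}\left(E\right)}\theta+\bar{\partial}^{\textrm{End}\left(E\right)}\theta^*\right)$; as $\Lambda$ annihilates the $\left(2,0\right)$- and $\left(0,2\right)$-parts, $\widehat{K}\left(D\right)=\widehat{K}\left(\nabla^E\right)+\sqrt{-1}\Lambda\left[\theta\wedge\theta^*\right]$, a sum of $h$-Hermitian endomorphisms, so the hypothesis $\widehat{K}\left(D\right)\left(s,s\right):=h\bigl(\widehat{K}\left(D\right)s,s\bigr)\le 0$ is well posed. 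A short computation, most transparent in a local $h$-unitary frame with $\theta=\sum_k\theta_k\,dz^k$ (so that $\sqrt{-1}\Lambda\left[\theta\wedge\theta^*\right]$ acts on $s$ by a positive multiple of $\sum_k\left[\theta_k,\theta_k^*\right]s$, with $\theta_k^*$ the $h$-adjoint of the endomorphism $\theta_k$), shows that $h\bigl(\sqrt{-1}\Lambda\left[\theta\wedge\theta^*\right]s,s\bigr)=|\theta^* s|^2-|\theta s|^2$, which equals $|\theta^* s|^2\ge 0$ precisely because $\theta s=0$. Substituting $h\bigl(\widehat{K}\left(\nabla^E\right)s,s\bigr)=\widehat{K}\left(D\right)\left(s,s\right)-|\theta^* s|^2$ into the integral identity above turns it into
\begin{equation}
  \int_M\Bigl(\widehat{K}\left(D\right)\left(s,s\right)-|\theta^* s|^2-|\partial^E s|^2\Bigr)\frac{\omega^n}{n!}=0,
\end{equation}
whose integrand is $\le 0$ at every point; hence it vanishes identically, forcing $\partial^E s=0$ and $\theta^* s=0$, that is $D^\prime s=0$ (equivalently $Ds=0$, so $s$ is $D$-parallel).

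The hard part will be the sign bookkeeping in the last step: one must check that $\widehat{K}\left(D\right)$ is genuinely $h$-Hermitian, so that the semi-negativity hypothesis makes sense, and, more importantly, that the correction $\sqrt{-1}\Lambda\left[\theta\wedge\theta^*\right]$, once restricted to a section annihilated by $\theta$, enters with the \emph{favorable} sign, so that it reinforces the curvature term rather than opposing it. Once the signs are fixed, the remainder is the classical Bochner vanishing argument (cf. \cite{Kobayashi}) for the Chern connection of $\left(E,\bar{\partial}^E,h\right)$ together with this extra non-negative Higgs contribution.
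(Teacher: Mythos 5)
Your proposal is correct and is essentially the argument the paper has in mind: the paper omits the proof, remarking only that it follows ``easily by using K\"{a}hler identities and Bochner's formula,'' and your Bochner identity for $|s|^2$, integrated over the compact K\"{a}hler manifold, together with the observation that $\theta s=0$ makes the Higgs term $\sqrt{-1}\Lambda\left[\theta\wedge\theta^*\right]$ contribute the non-negative quantity $|\theta^*s|^2$, is exactly that argument. The sign bookkeeping you flag as the delicate point does work out as you state, so the proof is complete.
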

  We can prove this easily by using K\"{a}hler identities and Bochner's formula and we omit the proof.
\begin{remark}
  Let $\left(E,D^{\prime\prime}\right)$ be a simple Higgs bundle over $M$. Then from Theorem \ref{thm:7.4}, if an HYM metric on $\left(E,D^{\prime\prime}\right)$ exists, it is unique up to positive real scalar multiplication.
\end{remark}
Moreover the following holds.
\begin{proposition}\label{prop:7.6}
  \textit{The natural map}
\begin{equation}
  \widetilde{\mathcal{Y}}\left(E\right)/\mathcal{G}\left(E\right)\ni\left[D^{\prime\prime},h\right]\mapsto\left[D^{\prime\prime}\right]\in\widetilde{\mathscr{H}}^{\prime\prime}\left(E\right)/\mathcal{G}\left(E\right)
\end{equation}
\textit{is injective}.
\end{proposition}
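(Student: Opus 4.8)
The plan is to reduce the statement to Theorem~\ref{thm:7.4}, using the fact that positive scalars are central in $\mathcal{G}\left(E\right)$. Suppose $\left(D_1^{\prime\prime},h_1\right),\left(D_2^{\prime\prime},h_2\right)\in\widetilde{\mathcal{Y}}\left(E\right)$ are such that $\left[D_1^{\prime\prime}\right]=\left[D_2^{\prime\prime}\right]$ in $\widetilde{\mathscr{H}}^{\prime\prime}\left(E\right)/\mathcal{G}\left(E\right)$, i.e.\ $D_2^{\prime\prime}=D_1^{\prime\prime}\cdot g$ for some $g\in\mathcal{G}\left(E\right)$. The goal is to produce $g^\prime\in\mathcal{G}\left(E\right)$ with $\left(D_2^{\prime\prime},h_2\right)=\left(D_1^{\prime\prime},h_1\right)\cdot g^\prime=\left(D_1^{\prime\prime}\cdot g^\prime,h_1\cdot g^\prime\right)$, which forces $\left[D_1^{\prime\prime},h_1\right]=\left[D_2^{\prime\prime},h_2\right]$ in $\widetilde{\mathcal{Y}}\left(E\right)/\mathcal{G}\left(E\right)$.

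First I would transport $\left(D_1^{\prime\prime},h_1\right)$ by $g$. Since $\mathcal{G}\left(E\right)$ preserves $\widetilde{\mathcal{Y}}\left(E\right)$, the pair $\left(D_1^{\prime\prime},h_1\right)\cdot g=\left(D_2^{\prime\prime},h_1\cdot g\right)$ again lies in $\widetilde{\mathcal{Y}}\left(E\right)$. Thus $\left(D_2^{\prime\prime},h_1\cdot g\right)$ and $\left(D_2^{\prime\prime},h_2\right)$ are two elements of $\widetilde{\mathcal{Y}}\left(E\right)$ carrying the \emph{same} simple Higgs structure $D_2^{\prime\prime}$, so Theorem~\ref{thm:7.4} applies and yields $a\in\mathbb{R}_{>0}$ with $h_2=a\left(h_1\cdot g\right)$.

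It then remains to absorb the scalar $a$ into the gauge transformation. From the definition \eqref{eq:6.1} of the action on metrics one checks $h_1\cdot\left(\sqrt{a}\,g\right)=a\left(h_1\cdot g\right)=h_2$, while centrality of $\sqrt{a}\,\mathrm{id}_E$ in $\mathcal{G}\left(E\right)$ gives $D_1^{\prime\prime}\cdot\left(\sqrt{a}\,g\right)=D_1^{\prime\prime}\cdot g=D_2^{\prime\prime}$. Hence $g^\prime:=\sqrt{a}\,g$ satisfies $\left(D_1^{\prime\prime},h_1\right)\cdot g^\prime=\left(D_2^{\prime\prime},h_2\right)$, and $\left[D_1^{\prime\prime},h_1\right]=\left[D_2^{\prime\prime},h_2\right]$, proving injectivity.

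No serious obstacle is expected: the only points requiring attention are that $\widetilde{\mathcal{Y}}\left(E\right)$ is stable under the $\mathcal{G}\left(E\right)$-action (so that Theorem~\ref{thm:7.4} may be invoked for $\left(D_2^{\prime\prime},h_1\cdot g\right)$), which is already recorded just before the statement, and the bookkeeping with the positive constant $a$, handled by passing from $g$ to $\sqrt{a}\,g$.
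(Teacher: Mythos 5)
Your argument is correct and follows the same route as the paper, which simply asserts that injectivity ``follows from Theorem \ref{thm:7.4}''; you have supplied exactly the missing details (transporting $\left(D_1^{\prime\prime},h_1\right)$ by $g$ to get two elements of $\widetilde{\mathcal{Y}}\left(E\right)$ with the same simple Higgs structure, invoking the uniqueness of the HYM metric up to a positive scalar, and absorbing that scalar into the gauge transformation $\sqrt{a}\,g$). The only thing the paper does that you do not is check well-definedness of the map, which it treats as the substantive part of the proof; that is worth a line but is not part of the injectivity claim itself.
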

\begin{proof}
  Since the injectivity of this mapping follows from Theorem \ref{thm:7.4}, we will show this map is well-defined. For $\left(D_1^{\prime\prime},h_1\right),\left(D_2^{\prime\prime},h_2\right)\in\mathcal{Y}\left(E\right)$, if $\left[D_1^{\prime\prime},h_1\right]=\left[D_2^{\prime\prime},h_2\right]$, then there exists $g\in\mathcal{G}\left(E\right)$ satisfying $\left(D_2^{\prime\prime},h_2\right)=\left(D_1^{\prime\prime},h_1\right)\cdot g$. Therefore we have $D_2^{\prime\prime}=D_1^{\prime\prime}\cdot g$ so $\left[D_1^{\prime\prime}\right]=\left[D_2^{\prime\prime}\right]$ holds. Hence it is well-defined. 
\end{proof}
Define a subset $\widetilde{\mathscr{Y}}\left(E,h\right)$ of $\mathscr{Y}\left(E,h\right)$ as
\begin{equation}
  \widetilde{\mathscr{Y}}\left(E,h\right)=\left\{D\in\mathscr{Y}\left(E,h\right)\middle|D\textrm{ is irreducible}\right\},
\end{equation}
then the right-action of $\mathcal{G}\left(E,h\right)$ on $\mathscr{Y}\left(E,h\right)$ preserves $\widetilde{\mathscr{Y}}\left(E,h\right)$ and there exists a bijection between $\widetilde{\mathscr{Y}}\left(E,h\right)/\mathcal{G}\left(E,h\right)$ and $\widetilde{\mathcal{Y}}\left(E\right)/\mathcal{G}\left(E\right)$. We denote $\widetilde{\mathscr{H}}^{\prime\prime}\left(E\right)/\mathcal{G}\left(E\right)$ and $\widetilde{\mathscr{Y}}\left(E,h\right)/\mathcal{G}\left(E,h\right)$ by $\widetilde{\mathscr{M}}_{\textrm{Higgs}}$ and $\widetilde{\mathscr{M}}_{\textrm{HYM}}$, respectively. From the above bijection and Proposition \ref{prop:7.6}, there is an injection from $\widetilde{\mathscr{M}}_{\textrm{HYM}}$ into $\widetilde{\mathscr{M}}_{\textrm{Higgs}}$. We denotes it by $f$.

We conclude preparations with the following lemma. The proof is straightforward and thus we omit the proof.
\begin{lemma}\label{lem:6.9}
  \textit{Let} $\left(E,D^{\prime\prime},h\right)$ \textit{be a Hermitian Higgs bundle over} $M$ \textit{and} $D_{D^{\prime\prime},h}\in\mathscr{H}\left(E,h\right)$ \textit{satisfy the condition that there exists a smooth function} $\varphi:M\to\mathbb{R}$ \textit{satisfying} $K\left(D_{D^{\prime\prime},h}\right)=\varphi\,\text{id}_E$. \textit{Then there exists a smooth positive-valued function} $a:M\to\mathbb{R}$ \textit{such that} $D_{D^{\prime\prime},ah}\in\mathscr{Y}\left(E,ah\right)$.
\end{lemma}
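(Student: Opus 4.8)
The plan is to reduce the assertion to the solvability of a single linear Poisson equation on $M$. Since the Higgs structure $D^{\prime\prime}=\bar\partial^E+\theta$ is to be held fixed, I would first observe that a conformal change $h\mapsto ah$ with $a\colon M\to\mathbb{R}_{>0}$ smooth leaves $\bar\partial^E$ and $\theta$ unchanged, and also leaves the metric adjoint $\theta^*$ unchanged: in the defining relation $(ah)(\theta s,t)=(ah)(s,\theta^*t)$ the positive scalar $a(p)$ cancels, so $\theta^*$ is the same for $h$ and $ah$, and hence so is $[\theta\wedge\theta^*]$. Therefore only the canonical connection $\nabla^E$ of $(E,\bar\partial^E,\,\cdot\,)$ varies, and by the standard transformation law for Chern connections under a scalar conformal change of metric one has $R(\nabla^E_{ah})=R(\nabla^E_{h})+(\bar\partial\partial\log a)\,\textrm{id}_E$. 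Contracting with $\Lambda$, which annihilates the $(2,0)$- and $(0,2)$-parts $\partial^{\textrm{End}(E)}\theta$ and $\bar\partial^{\textrm{End}(E)}\theta^*$ of $R(D)$, gives
\[
  K\bigl(D_{D^{\prime\prime},ah}\bigr)=K\bigl(D_{D^{\prime\prime},h}\bigr)+\sqrt{-1}\,\Lambda\bigl(\bar\partial\partial\log a\bigr)\,\textrm{id}_E=\bigl(\varphi+\sqrt{-1}\,\Lambda\bar\partial\partial u\bigr)\,\textrm{id}_E,\qquad u:=\log a,
\]
so that $D_{D^{\prime\prime},ah}\in\mathscr{Y}(E,ah)$ is equivalent to solving $\sqrt{-1}\,\Lambda\bar\partial\partial u=c-\varphi$ for a smooth real-valued function $u$.

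Next I would appeal to Hodge theory: the operator $u\mapsto\sqrt{-1}\,\Lambda\bar\partial\partial u$ is a nonzero constant multiple of the Laplace--Beltrami operator of $(M,g)$, so on the compact (connected) manifold $M$ it is surjective onto the space of smooth functions whose integral against $\omega^n/n!$ vanishes. Hence the equation above is solvable precisely when $\int_M(c-\varphi)\,\omega^n/n!=0$, i.e.\ when $c$ equals the $\omega^n/n!$-average of $\varphi$. This compatibility condition is exactly a Chern--Weil identity: taking the trace of $K(D_{D^{\prime\prime},h})=\varphi\,\textrm{id}_E$ and integrating, and using that $[\theta\wedge\theta^*]$ is pointwise trace-free (it is a sum of commutators $[\theta_i,\theta^*_j]$) while the $(2,0)+(0,2)$-part of $R(D)$ is killed by $\Lambda$, one obtains
\[
  r\int_M\varphi\,\frac{\omega^n}{n!}=\int_M\textrm{tr}\,K\bigl(D_{D^{\prime\prime},h}\bigr)\,\frac{\omega^n}{n!}=\int_M\textrm{tr}\bigl(\sqrt{-1}\,\Lambda R(\nabla^E_{h})\bigr)\,\frac{\omega^n}{n!}=\frac{2\pi\deg E}{(n-1)!}=r\,c\,\textrm{Vol}(M,g),
\]
which is precisely the desired identity $\int_M\varphi\,\omega^n/n!=c\,\textrm{Vol}(M,g)$ (recall $c=\tfrac{2\pi\mu(E)}{(n-1)!\,\textrm{Vol}(M,g)}$ and $\mu(E)=\deg(E)/r$).

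Finally I would pick a smooth solution $u$ (smoothness by elliptic regularity, since $c-\varphi$ is smooth) and set $a:=e^{u}$, which is smooth and positive. Because $D^{\prime\prime}$ is unchanged, $D_{D^{\prime\prime},ah}$ is the Hitchin--Simpson connection of the Hermitian Higgs bundle $(E,D^{\prime\prime},ah)$, hence lies in $\mathscr{H}(E,ah)$, and by construction $\sqrt{-1}\,\Lambda R(D_{D^{\prime\prime},ah})=c\,\textrm{id}_E$, so $D_{D^{\prime\prime},ah}\in\mathscr{Y}(E,ah)$, as required. The only step carrying genuine content is the average identity for $\varphi$; everything else is the routine conformal-change computation together with standard Hodge theory, and the single point where care is needed is fixing the signs and normalisations in the transformation law for $R(\nabla^E_{ah})$ and in the constant relating $\sqrt{-1}\,\Lambda\bar\partial\partial$ to the Laplacian.
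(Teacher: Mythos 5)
Your argument is correct and is precisely the ``straightforward'' proof the paper omits: a conformal change $h\mapsto ah$ fixes $\bar\partial^E$, $\theta$, $\theta^*$ and shifts the mean curvature by $\sqrt{-1}\Lambda\bar\partial\partial\log a\,\mathrm{id}_E$, so the problem reduces to a Poisson equation whose solvability condition is exactly the Chern--Weil identity $\int_M\varphi\,\omega^n/n!=c\,\mathrm{Vol}(M,g)$ forced by the normalisation of $c$. No gaps.
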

Based on these, we prove the following theorem.
\begin{theorem}
  $f:\widetilde{\mathscr{M}}_{\textrm{HYM}}\to\widetilde{\mathscr{M}}_{\textrm{Higgs}}$ \textit{is an open mapping}.
\end{theorem}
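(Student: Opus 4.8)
The plan is to establish the following local statement, equivalent to openness of $f$: fix $\left[D\right]\in\widetilde{\mathscr{M}}_{\textrm{HYM}}$, write $D=D_{D_0^{\prime\prime},h}$ with $D_0^{\prime\prime}\in\widetilde{\mathscr{H}}^{\prime\prime}\left(E\right)$, and show that for every open neighbourhood $V$ of $\left[D\right]$ the image $f\left(V\right)$ is a neighbourhood of $\left[D_0^{\prime\prime}\right]$ in $\widetilde{\mathscr{M}}_{\textrm{Higgs}}$. Since $\left(E,D_0^{\prime\prime}\right)$ is simple, a $\mathcal{G}\left(E\right)$-slice through $D_0^{\prime\prime}$ identifies a neighbourhood of $\left[D_0^{\prime\prime}\right]$ in $\widetilde{\mathscr{M}}_{\textrm{Higgs}}$ with a neighbourhood of $D_0^{\prime\prime}$ in $\widetilde{\mathscr{H}}^{\prime\prime}\left(E\right)$, so it suffices to carry out the following analytic task: for each Higgs structure $D^{\prime\prime}\in\widetilde{\mathscr{H}}^{\prime\prime}\left(E\right)$ in this slice sufficiently close to $D_0^{\prime\prime}$, construct a Hermitian metric $k\left(D^{\prime\prime}\right)$ with $\left(D^{\prime\prime},k\left(D^{\prime\prime}\right)\right)\in\widetilde{\mathcal{Y}}\left(E\right)$, in such a way that $k\left(D_0^{\prime\prime}\right)=h$ and $k$ depends continuously on $D^{\prime\prime}$. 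Granting this, the pair $\left(D^{\prime\prime},k\left(D^{\prime\prime}\right)\right)$ determines, via the bijection $\widetilde{\mathscr{Y}}\left(E,h\right)/\mathcal{G}\left(E,h\right)\simeq\widetilde{\mathcal{Y}}\left(E\right)/\mathcal{G}\left(E\right)$, a class $\left[\widehat{D}\left(D^{\prime\prime}\right)\right]\in\widetilde{\mathscr{M}}_{\textrm{HYM}}$ with $f\left(\left[\widehat{D}\left(D^{\prime\prime}\right)\right]\right)=\left[D^{\prime\prime}\right]$ and $\left[\widehat{D}\left(D_0^{\prime\prime}\right)\right]=\left[D\right]\in V$; by continuity $\left[\widehat{D}\left(D^{\prime\prime}\right)\right]\in V$ for all $D^{\prime\prime}$ in a smaller neighbourhood of $D_0^{\prime\prime}$, so $f\left(V\right)$ contains the image in $\widetilde{\mathscr{M}}_{\textrm{Higgs}}$ of that neighbourhood, which is a neighbourhood of $\left[D_0^{\prime\prime}\right]$.

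To carry out the analytic task I would combine the implicit function theorem with the conformal correction of Lemma \ref{lem:6.9}. First I would parametrise metrics near $h$ as $he^{s}$ with $s\in\Omega^0\bigl(\textrm{Herm}^0\left(E,h\right)\bigr)$ a small trace-free self-adjoint endomorphism, and consider the map $\Phi\left(D^{\prime\prime},s\right)$ equal to the trace-free part of $\sqrt{-1}\Lambda R\left(D_{D^{\prime\prime},he^{s}}\right)$ (after the harmless identification of $he^{s}$-self-adjoint with $h$-self-adjoint endomorphisms, which does not affect the linearisation below since $\sqrt{-1}\Lambda R\left(D\right)=c\,\textrm{id}_E$), so that $\Phi\left(D_0^{\prime\prime},0\right)=0$. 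Differentiating the Chern connection of $\left(E,\bar{\partial}^E,he^{s}\right)$ and the $he^{s}$-adjoint of $\theta$ in $s$ shows that the derivative of $\Phi$ in $s$ at $\left(D_0^{\prime\prime},0\right)$, an operator on $\Omega^0\bigl(\textrm{Herm}^0\left(E,h\right)\bigr)$, coincides up to a nonzero real constant with the restriction of the Laplacian induced by the Higgs structure $D_{\textrm{End}\left(E\right)}^{\prime\prime}$ on $\textrm{End}\left(E\right)$. In particular it is elliptic and formally self-adjoint, and its kernel lies in $\widetilde{H}_{D^{\prime\prime}}^0\left(\mathscr{C}^*\right)$, which vanishes because $D$ is irreducible: then $\widetilde{H}_D^0\left(\mathscr{B}^*\right)=0$, equivalently $\widetilde{H}_{D^{\prime\prime}}^0\left(\mathscr{C}^*\right)=0$ by \eqref{eq:4.10}. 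Hence this linearisation is an isomorphism on suitable Sobolev completions, and the implicit function theorem in Banach spaces produces $s\left(D^{\prime\prime}\right)$ depending smoothly on $D^{\prime\prime}$ near $D_0^{\prime\prime}$, with $s\left(D_0^{\prime\prime}\right)=0$ and $\Phi\left(D^{\prime\prime},s\left(D^{\prime\prime}\right)\right)=0$; elliptic regularity makes each $s\left(D^{\prime\prime}\right)$ smooth.

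Because $s\left(D^{\prime\prime}\right)$ is trace-free, the metric induced on $\det E$ by $he^{s\left(D^{\prime\prime}\right)}$ equals that induced by $h$, hence the trace of $\sqrt{-1}\Lambda R\bigl(D_{D^{\prime\prime},he^{s\left(D^{\prime\prime}\right)}}\bigr)$ agrees with the trace of $\sqrt{-1}\Lambda R\left(\nabla^E\right)$, so $K\bigl(D_{D^{\prime\prime},he^{s\left(D^{\prime\prime}\right)}}\bigr)=\varphi\,\textrm{id}_E$ for a fixed smooth real function $\varphi$ on $M$. Lemma \ref{lem:6.9} then supplies a positive function $a$ on $M$ such that $D_{D^{\prime\prime},\,ahe^{s\left(D^{\prime\prime}\right)}}$ is an HYM connection, and I would set $k\left(D^{\prime\prime}\right):=ahe^{s\left(D^{\prime\prime}\right)}$; as the construction of $a$ depends only on $\varphi$ and $\left(M,g\right)$ and gives $a\equiv 1$ when $s=0$, the metric $k$ depends continuously on $D^{\prime\prime}$ and $k\left(D_0^{\prime\prime}\right)=h$. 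Since $D^{\prime\prime}\in\widetilde{\mathscr{H}}^{\prime\prime}\left(E\right)$, we then have $\left(D^{\prime\prime},k\left(D^{\prime\prime}\right)\right)\in\widetilde{\mathcal{Y}}\left(E\right)$, which completes the analytic task and hence the proof.

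The hard part will be the analytic core of the second paragraph: setting up the Sobolev framework and, above all, proving that the linearised operator — the Laplacian of the Higgs structure on $\textrm{End}\left(E\right)$ restricted to trace-free self-adjoint endomorphisms — is invertible, which is precisely where irreducibility of $D$ is used. Subsidiary technical points that still require care are the precise use of a gauge slice so that neighbourhoods of $\left[D\right]$ and $\left[D_0^{\prime\prime}\right]$ in the two moduli spaces correspond, the bookkeeping that compares $he^{s}$-self-adjoint and $h$-self-adjoint endomorphisms, and the continuity of the full solution map $D^{\prime\prime}\mapsto k\left(D^{\prime\prime}\right)$, including the conformal factor produced by Lemma \ref{lem:6.9}.
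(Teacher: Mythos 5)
Your proposal is correct and takes essentially the same route as the paper: the paper perturbs the Higgs structure by a positive self-adjoint, determinant-one gauge transformation $g$ with the metric $h$ held fixed, which by Proposition \ref{prop:6.2} is equivalent to your perturbation $he^{s}$ of the metric with $D^{\prime\prime}$ held fixed, and both arguments rest on the implicit function theorem in Sobolev spaces with linearisation the Higgs Laplacian on trace-free Hermitian endomorphisms (invertible by simplicity/irreducibility), elliptic regularity, the conformal correction of Lemma \ref{lem:6.9}, and the identification of $\widetilde{\mathcal{Y}}\left(E\right)/\mathcal{G}\left(E\right)$ with $\widetilde{\mathscr{Y}}\left(E,h\right)/\mathcal{G}\left(E,h\right)$. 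The only point you spell out more explicitly than the paper is the continuity of the solution map needed to conclude that $f\left(V\right)$ is a neighbourhood of each of its points.
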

\begin{proof}
  Let $D=D_{D^{\prime\prime},h}\in\widetilde{\mathscr{Y}}\left(E,h\right)$ and $s>n$. We will show that for $D_1^{\prime\prime}\in\mathscr{H}^{\prime\prime}\left(E\right)$ sufficiently close to $D^{\prime\prime}$, there exist $D_{D_2^{\prime\prime},h}\in\widetilde{\mathscr{M}}_{\textrm{HYM}}$ and $g^\prime\in\mathcal{G}\left(E\right)$ such that $D_2^{\prime\prime}=g^\prime\cdot D_1^{\prime\prime}$. Let
  \begin{equation}
    P=\left\{g\in\mathcal{G}\left(E\right)\middle|g\text{ is positive definite},g\in\Omega^0\bigl(\textrm{Herm}\left(E,h\right)\bigr),\det g=1\right\},
  \end{equation}
  then the tangent space of $P$ at $\textrm{id}_E\in P$, denoted by $T_{\textrm{id}_E}P=:Q$, is given by
  \begin{equation}
    Q=\left\{a\in\Omega^0\bigl(\textrm{Herm}\left(E,h\right)\bigr)\middle|\tr a=0\right\}.
  \end{equation}

  In general, for any $D^{\prime\prime}\in\mathscr{D}^{\prime\prime}\left(E\right)$, we have $D_{D^{\prime\prime},h}\in\mathscr{A}\left(E\right)$, and for $g\in P$, we obtain
  \begin{equation}
    D_{D^{\prime\prime}\cdot g,h}=D_{D^{\prime\prime},h}\cdot_hg=g^*\circ D^\prime\circ\left(g^{-1}\right)^*+g^{-1}\circ D^{\prime\prime}\circ g=g\circ D^\prime\circ g^{-1}+g^{-1}\circ D^{\prime\prime}\circ g.
  \end{equation}
  We define $\displaystyle K^0\left(D_{D^{\prime\prime}\cdot g,h}\right):=K\left(D_{D^{\prime\prime} \cdot g,h}\right)-\frac{1}{r}\tr\bigl(K\left(D_{D^{\prime\prime}\cdot g,h}\right)\bigr)\textrm{id}_E\in\Omega^0\bigl(\textrm{End}\left(E\right)\bigr)$, where $r=\textrm{rank}\left(E\right)$ and $K_{\textrm{Herm}}^0\left(D_{D^{\prime\prime}\cdot g,h}\right)$ denotes the $\textrm{Herm}\left(E,h\right)$-component of $K^0\left(D_{D^{\prime\prime}\cdot g,h}\right)$. Then we have $K_{\textrm{Herm}}^0\left(D_{D^{\prime\prime}\cdot g,h}\right)\in Q$. Therefore we define a mapping $F:\mathscr{A}\left(E\right)\times P\to Q$ by, for $\left(D_{D^{\prime\prime},h},g\right)\in\mathscr{A}\left(E\right)\times P,F\left(D_{D^{\prime\prime},h},g\right)=K_{\textrm{Herm}}^0\left(D_{D^{\prime\prime}\cdot g,h}\right)$
  and extend it to $F:W^s\bigl(\mathscr{A}\left(E\right)\bigr)\times W^{s+1}\left(P\right)\to W^{s-1}\left(Q\right)$. For the initially given $D\in\widetilde{\mathscr{Y}}\left(E,h\right)$, we calculate the value at $\left(0,a\right)$ of the differential $\left(dF\right)_{\left(D_{D^{\prime\prime},h},\textrm{id}_E\right)}:W^s\left(\mathscr{C}^1\right)\times W^{s+1}\left(Q\right)\to W^{s-1}\left(Q\right)$. We have
  \begin{equation}
    \left.\frac{d}{dt}\right|_{t=0}K\left(D_{D^{\prime\prime}\cdot e^{at},h}\right)=-\Delta a
  \end{equation}
  and this is trace-free, where $\Delta=D_{D_{\textrm{End}\left(E\right)}^{\prime\prime},h_{\textrm{End}\left(E\right)}}^*\circ D_{D_{\textrm{End}\left(E\right)}^{\prime\prime},h_{\textrm{End}\left(E\right)}}$. Moreover if we decompose $D_{D_{\textrm{End}\left(E\right)}^{\prime\prime},h_{\textrm{End}\left(E\right)}}$ into unitary connection part and Hermitian transformation on $\textrm{End}\left(E\right)$-valued $1$-form part as
  \begin{equation}
    D_{D_{\textrm{End}\left(E\right)}^{\prime\prime},h_{\textrm{End}\left(E\right)}}=\nabla^{\textrm{End}\left(E\right)}+\Theta^{\textrm{End}\left(E\right)},
  \end{equation}
  then the $\textrm{Herm}\left(E,h\right)$-component of $\Delta a$ is given by
  \begin{equation}
    \left(\nabla^{\textrm{End}\left(E\right)}\right)^*\nabla^{\textrm{End}\left(E\right)}a+\left(\Theta^{\textrm{End}\left(E\right)}\wedge\right)^*\Theta^{\textrm{End}\left(E\right)}a=\Delta^{\nabla^{\textrm{End}\left(E\right)}}a+\left(\Theta^{\textrm{End}\left(E\right)}\wedge\right)^*\Theta^{\textrm{End}\left(E\right)} a.
  \end{equation}
  Therefore we have $\left(dF\right)_{\left(D_{D^{\prime\prime},h},\textrm{id}_E\right)}\left(0,a\right)=-\left(\Delta^{\nabla^{\textrm{End}\left(E\right)}}a+\left(\Theta^{\textrm{End}\left(E\right)}\wedge\right)^*\Theta^{\textrm{End}\left(E\right)}a\right)$. We view it as the mapping $\left(dF\right)_{\left(D_{D^{\prime\prime},h},\textrm{id}_E\right)}:W^{s+1}\left(Q\right)\to W^{s-1}\left(Q\right)$. Then since $D^{\prime\prime}$ is simple, it is a linear isomorphism. Furthermore since $F\left(D_{D^{\prime\prime},h},\textrm{id}_E\right)=0$, for $D_1^{\prime\prime}$ sufficiently close to $D^{\prime\prime}\in\mathscr{H}^{\prime\prime}\left(E\right)$, by the implicit function theorem, there exists a unique $g\in W^{s+1}\left(P\right)$ sufficiently close to $\textrm{id}_E\in P$ satisfying $F\left(D_{D_1^{\prime\prime},h},g\right)=0$. We write $D_1^{\prime\prime}$ as $D_1^{\prime\prime}=D^{\prime\prime}+\alpha$ for some $\alpha\in\Omega^1\bigl(\textrm{End}\left(E\right)\bigr)$. Then we have $D_1^{\prime\prime}\cdot g=D^{\prime\prime}+g^{-1}\circ D^{\prime\prime}g+g^{-1}\circ\alpha\circ g$. Since $D_1^{\prime\prime}\cdot g\in W^s\bigl(\mathscr{H}^{\prime\prime}\left(E\right)\bigr)$, if we define $\beta\in W^s\left(\mathscr{C}^1\right)$ by $\beta=g^{-1}\circ D_{\textrm{End}\left(E\right)}^{\prime\prime}g+g^{-1}\circ\alpha\circ g$, then we have
  \begin{align}
    R\left(D_{D_1^{\prime\prime}\cdot g,h}\right)=D^\prime\circ D^{\prime\prime}+D^{\prime\prime}\circ D^\prime+D^\prime\beta+D^{\prime\prime}\left(\beta^\prime-\beta^{\prime\prime}\right)^*+\left[\beta,\left(\beta^\prime-\beta^{\prime\prime}\right)^*\right].
  \end{align}
  Thus we obtain
  \begin{align}
    K^0\left(D_{D_1^{\prime\prime}\cdot g,h}\right)=&\sqrt{-1}\Lambda\left(D^\prime\beta+D^{\prime\prime}\left(\beta^\prime-\beta^{\prime\prime}\right)^*+\left[\beta,\left(\beta^\prime-\beta^{\prime\prime}\right)^*\right]\right)\\
    &-\frac{1}{r}\sqrt{-1}\Lambda\left(\partial\tr\beta+\bar{\partial}\tr\left(\beta^\prime-\beta^{\prime\prime}\right)^*\right)\,\textrm{id}_E.
  \end{align}
  Furthermore since $D_1^{\prime\prime}\in\mathscr{H}^{\prime\prime}\left(E\right)$, we have $K_{\textrm{Herm}}^0\left(D_{D_1^{\prime\prime}\cdot g,h}\right)=K^0\left(D_{D_1^{\prime\prime}\cdot g,h}\right)$. Therefore $g\in W^{s+1}\left(P\right)$ is a solution to the following equation:
  \begin{align}
    \left\{
    \begin{aligned}
        &\sqrt{-1}\Lambda\left(D_{\textrm{End}\left(E\right)}^\prime\beta+D_{\textrm{End}\left(E\right)}^{\prime\prime}\left(\beta^\prime-\beta^{\prime\prime}\right)^*+\left[\beta,\left(\beta^\prime-\beta^{\prime\prime}\right)^*\right]\right)\\
        &\quad\quad\quad-\frac{1}{r}\sqrt{-1}\Lambda\left(\partial\tr\beta+\bar{\partial}\tr\left(\beta^\prime-\beta^{\prime\prime}\right)^*\right)\,\textrm{id}_E=0, \\
        &\beta=g^{-1}\circ D_{\textrm{End}\left(E\right)}^{\prime\prime}g+g^{-1}\circ\alpha\circ g.
    \end{aligned}
    \right.
  \end{align}
  From this equation and a straightforward calculation, it can be shown that $g$ is smooth. Since $K^0\left(D_{D_1^{\prime\prime}\cdot g, h}\right)=0$, there exists a smooth function $\varphi:M\to\mathbb{R}$ satisfying $K\left(D_{D_1^{\prime\prime}\cdot g,h}\right)=\varphi\,\textrm{id}_E$. Therefore by Theorem \ref{lem:6.9}, there exists a smooth positive definite function $a:M\to\mathbb{R}$ such that $D_{D_1^{\prime\prime}\cdot g,h\cdot a\,\textrm{id}_E}=D_{D_1^{\prime\prime}\cdot g,a^2h}\in\mathscr{Y}\left(E,a^2h\right)=\mathscr{Y}\left(E,h\cdot a\,\textrm{id}_E\right)$. Thus by Theorem \ref{prop:6.3}, we obtain $D_{D_1^{\prime\prime}\cdot\left(g\circ\frac{1}{a}\,\textrm{id}_E\right),h}\in\mathscr{Y}\left(E,h\right)$ Therefore defining $\displaystyle D_2^{\prime\prime}:=D_1^{\prime\prime}\cdot\left(g\circ\frac{1}{a}\,\textrm{id}_E\right)$ and $\displaystyle g^\prime:=g\circ\frac{1}{a}\,\textrm{id}_E$, this completes the proof.
\end{proof}
\appendix
\section*{Appendix: Infinitesimal deformations and the deformation complex of Higgs structures}
Let $\left(M,g\right)$ be a compact Kähler manifold and $E$ be a smooth complex vector bundle over $M$. We formally calculate the tangent space at $\left[D^{\prime\prime}\right]\in\mathscr{M}_{\textrm{Higgs}}$. Let
\begin{equation}
  D^{\prime\prime}_t=D^{\prime\prime}+\alpha_t,\quad\left|t\right|<\varepsilon\ll 1
\end{equation}
be a curve in $\mathscr{H}^{\prime\prime}\left(E\right)$ where $\alpha_t\in\Omega^1\bigl(\textrm{End}\left(E\right)\bigr)$ and $\alpha_0=0$. Since $D^{\prime\prime}_t\in\mathscr{H}^{\prime\prime}\left(E\right)$, we have $D_t^{\prime\prime}\circ D_t^{\prime\prime}=D_{\textrm{End}\left(E\right)}^{\prime\prime}\alpha_t+\alpha_t\wedge\alpha_t=0$. Differentiating both sides at $t=0$, we obtain
\begin{equation}
  D_{\textrm{End}\left(E\right)}^{\prime\prime}\alpha=0,\textrm{ where }\alpha=\left.\frac{d}{dt}\right|_{t=0}\alpha_t.
\end{equation}
Moreover if $D^{\prime\prime}_t$ can be expressed by a one-parameter family of gauge transformations $\left(g_t\right)_{\left|t\right|<\varepsilon}$ of $E$ as $D^{\prime\prime}_t=g_t^{-1}\circ D^{\prime\prime}\circ g_t$, then differentiating both sides at $t=0$, we obtain
\begin{equation}
  \alpha=D_{\textrm{End}\left(E\right)}^{\prime\prime}g,\textrm{ where }g=\left.\frac{d}{dt}\right|_{t=0}g_t.
\end{equation}
From these, if $\mathscr{M}_{\textrm{Higgs}}$ is a manifold, the ``tangent space'' at $\left[D^{\prime\prime}\right]$ can be considered as
\begin{equation}
  T_{\left[D^{\prime\prime}\right]}\mathscr{M}_{\textrm{Higgs}}=\frac{\left\{\alpha\in\Omega^1\bigl(\textrm{End}\left(E\right)\bigr)\middle|D_{\textrm{End}\left(E\right)}^{\prime\prime}\alpha=0\right\}}{\left\{D_{\textrm{End}\left(E\right)}^{\prime\prime}g\middle|g\in\Omega^0\bigl(\textrm{End}\left(E\right)\bigr)\right\}}.
\end{equation}
This can be formulated as the cohomology group of a certain elliptic complex as follows. Fix a Hermitian metric $h$ on $E$ and consider the following sequence for $D^{\prime\prime}\in\mathscr{H}^{\prime\prime}\left(E\right)$:
\begin{equation}
  \xymatrix{\left(\mathscr{C}^*\right):0\ar[r]&\mathscr{C}^0\ar[r]^-{D_{\textrm{End}\left(E\right)}^{\prime\prime}}&\mathscr{C}^1\ar[r]^-{D_{\textrm{End}\left(E\right)}^{\prime\prime}}&\mathscr{C}^2\ar[r]^-{D_{\textrm{End}\left(E\right)}^{\prime\prime}}&\cdots\ar[r]^-{D_{\textrm{End}\left(E\right)}^{\prime\prime}}&\mathscr{C}^{2n}\ar[r]&0}
\end{equation}
Since $D^{\prime\prime}\in\mathscr{H}^{\prime\prime}\left(E\right)$, this sequence forms a complex and is called the Dolbeault-Higgs complex \cite[$\left(9.2.1\right)$]{Fujiki}.
\begin{proposition*}\cite{Fujiki}
  \textit{The Dolbeault-Higgs complex} $\left(\mathscr{C}^*\right)$ \textit{is an elliptic complex}.
\end{proposition*}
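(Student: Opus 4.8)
The plan is to verify ellipticity straight from the definition: compute the principal symbol sequence of $\left(\mathscr{C}^*\right)$ at an arbitrary point $p\in M$ and an arbitrary nonzero covector $\xi\in T_p^*M$, and show that it is exact. Writing $D_{\textrm{End}\left(E\right)}^{\prime\prime}=\bar{\partial}^{\textrm{End}\left(E\right)}+\theta^{\textrm{End}\left(E\right)}$, the Higgs-field term $\theta^{\textrm{End}\left(E\right)}$ is $C^\infty\left(M\right)$-linear, hence of order zero, so it contributes nothing to the principal symbol. Therefore $\sigma_1\bigl(D_{\textrm{End}\left(E\right)}^{\prime\prime}\bigr)\left(\xi\right)\left(\alpha\right)=\xi^{0,1}\wedge\alpha$ for $\alpha\in\bigl(\textrm{End}\left(E\right)\otimes\Lambda^kM\bigr)_p$, where $\xi^{0,1}$ denotes the $\left(0,1\right)$-component of $\xi$. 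Since $\xi$ is a nonzero real covector, $\xi^{0,1}\neq 0$: otherwise $\xi^{1,0}=\overline{\xi^{0,1}}=0$ and $\xi=0$.

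Next I would recognize the symbol sequence as a Koszul complex. Decomposing $\Lambda^\bullet T_p^*M=\bigoplus_{a+b=\bullet}\Lambda^{a,b}T_p^*M$, the operator $\xi^{0,1}\wedge-$ preserves the holomorphic degree $a$ and raises the antiholomorphic degree $b$ by one, so the symbol sequence splits as the direct sum over $a$ of the complexes $\bigl(\textrm{End}\left(E\right)_p\otimes\Lambda^{a,0}T_p^*M\otimes\Lambda^{0,\bullet}T_p^*M,\ \xi^{0,1}\wedge-\bigr)$. Choosing a basis of $\overline{T_p^*M}$ whose first element is $\xi^{0,1}$, the complex $\bigl(\Lambda^{0,\bullet}T_p^*M,\ \xi^{0,1}\wedge-\bigr)$ is the standard exact complex expressing that wedging with a nonzero degree-one element of an exterior algebra has trivial cohomology, with $\ker\left(\xi^{0,1}\wedge-\right)=\textrm{im}\left(\xi^{0,1}\wedge-\right)$ in each degree, injectivity in degree $0$, and surjectivity in top degree. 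Tensoring with the finite-dimensional factor $\textrm{End}\left(E\right)_p\otimes\Lambda^{a,0}T_p^*M$ preserves exactness, and summing over $a$ yields exactness of the full symbol sequence; hence $\left(\mathscr{C}^*\right)$ is elliptic.

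Equivalently, one may simply observe that $D_{\textrm{End}\left(E\right)}^{\prime\prime}$ has the same principal symbol as the Dolbeault operator $\bar{\partial}$ on $\textrm{End}\left(E\right)$-valued forms, so $\left(\mathscr{C}^*\right)$ shares its symbol sequence with the elliptic Dolbeault complex $\bigoplus_a\Omega^{a,\bullet}\bigl(\textrm{End}\left(E\right)\bigr)$, whose ellipticity is classical. I do not expect a genuine obstacle here; the only point requiring care — and it is a minor one — is noting that $\theta^{\textrm{End}\left(E\right)}$ is a zeroth-order perturbation, so it plays no role in the symbol even though it is essential for the relation $D^{\prime\prime}\circ D^{\prime\prime}=0$ that makes $\left(\mathscr{C}^*\right)$ a complex in the first place. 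The remaining bookkeeping of the full bigrading, rather than a single Dolbeault column, is routine and parallels the symbol computation already carried out for $\left(\mathscr{B}^*\right)$ above.
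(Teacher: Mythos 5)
Your proof is correct, and it is precisely the argument the paper has in mind: the paper omits the proof, remarking only that it goes ``in the same way as the case of the Dolbeault complex,'' which is exactly your observation that $\theta^{\textrm{End}\left(E\right)}$ is a zeroth-order perturbation, so the symbol sequence is the Koszul complex of $\xi^{0,1}\wedge-$ and exactness follows as for the Dolbeault complex. No gaps.
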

We can prove this in the same way on the case of Dolbeault complex and we omit the proof. Let $H_{D^{\prime\prime}}^k\left(\mathscr{C}^*\right)$ be the $k$-th cohomology group. Then $T_{\left[D^{\prime\prime}\right]}\mathscr{M}_{\textrm{Higgs}}\simeq H_{D^{\prime\prime}}^1\left(\mathscr{C}^*\right)$ holds. Let $(\widetilde{\mathscr{C}}^*)$ be a subcomplex of $\left(\mathscr{C}^*\right)$ formed by $\widetilde{\mathscr{C}}^k$, then it is also elliptic. Let $\widetilde{H}_{D^{\prime\prime}}^k\left(\mathscr{C}^*\right)$ be the $k$-th cohomology group of $(\widetilde{\mathscr{C}}^*)$, then since $\textrm{End}\left(E\right) = \textrm{End}^0\left(E\right)\oplus\mathbb{C}\,\textrm{id}_E$, we have
\renewcommand{\theequation}{\arabic{equation}}
\setcounter{equation}{0}
\begin{equation}
  H_{D^{\prime\prime}}^k\left(\mathscr{C}^*\right)=\widetilde{H}_{D^{\prime\prime}}^k\left(\mathscr{C}^*\right)+\bigoplus_{p+q=k} H_{\bar{\partial}}^{p,q}\left(M\right)=\widetilde{H}_{D^{\prime\prime}}^k\left(\mathscr{C}^*\right)+H^k\left(M,\mathbb{C}\right). \label{eq:ap}
\end{equation}
\section*{Acknowledgement}
The author would like to thank his thesis advisor, Professor Nobuhiro Honda and colleagues in my laboratory, Ryota Kotani and Ryoma Saito for thoughtful discussions and constant supports.

\bibliography{reference}
\bibliographystyle{amsplain}

\end{document}